\documentclass[dvipdfmx]{amsart}

\makeatletter
\@addtoreset{equation}{section}

\makeatother

\usepackage{amsfonts, amsmath, amssymb, amsthm}
\usepackage{url}
\usepackage{cleveref}
\usepackage[dvipdfmx]{graphicx}
\usepackage[dvipdfmx]{color}
\usepackage{amscd}
\usepackage[right]{lineno}
\usepackage{mathrsfs}
\usepackage{MnSymbol}
\usepackage{enumerate}
\usepackage[all]{xy}

\newtheorem{theorem}{Theorem}[section]
\newtheorem{lemma}{Lemma}[section]
\newtheorem{proposition}{Proposition}[section]
\newtheorem{corollary}{Corollary}[section]

\theoremstyle{remark}

\newtheorem{remark}{Remark}[section]

\newcommand{\ext}{{\rm Ext}}

\newcommand{\teich}{\mathcal{T}}
\newcommand{\torelli}{\mathscr{T}}

\newcommand{\mcg}{\operatorname{MCG}}
\newcommand{\tmod}{\operatorname{Mod}}
\newcommand{\torelliG}{\mathcal{I}}

\newcommand{\ThursM}{\mu_{Th}}
\newcommand{\PThursM}{\hat{\mu}_{Th}}

\newcommand{\PSiegelThursM}{\mathscr{M}_{Th}}

\newcommand{\convgenus}{{\boldsymbol \xi}}

\newcommand{\ray}{{\boldsymbol r}}

\newcommand{\ml}{\mathcal{ML}}
\newcommand{\pml}{\mathcal{PML}}
\newcommand{\syp}{\mathop{Sp}}
\newcommand{\psyp}{\mathop{PSp}}
\newcommand{\gl}{\mathop{GL}}
\newcommand{\spl}{\mathop{Sp}}
\newcommand{\symm}{\mathop{\mathbf{Sym}}}

\newcommand{\mat}{\mathrm{Mat}}

\newcommand{\symrep}{\mathop{\boldsymbol{\varrho}}}
\newcommand{\Cayley}{\Phi_{C\!a\!y}}

\newcommand{\hypell}{\operatorname{\mathbf{h\ell}}}
\newcommand{\prG}{\operatorname{\varpi}}

\newcommand{\busemann}{\boldsymbol{b}}

\newcommand{\horo}{\mathfrak{horo}}

\newcommand{\Dirichlet}{\mathscr{D}}
\newcommand{\horos}{\mathscr{H}}

\newcommand{\conservativepart}{\mathfrak{Cons}}
\newcommand{\dissipativepart}{\mathfrak{Diss}}

\begin{document}
%\frontmatter

\title[Hopf decomposition]{Hopf decomposition of the actions of subgroups of the mapping class group}
\author{Hideki Miyachi}

\date{\today}
\address{School of Mathematics and Physics,
College of Science and Engineering,
Kanazawa University,
Kakuma-machi, Kanazawa,
Ishikawa, 920-1192, Japan
}
\email{miyachi@se.kanazawa-u.ac.jp}
\thanks{This work is partially supported by JSPS KAKENHI Grant Numbers
25K00909,
26K21806,
23K22396 }
\subjclass[2020]{31C10, 32G05, 32G15, 32U35, 57M50}
\keywords{Teichm\"uller space, Thurston boundary,
Thurston measure, Hopf decomposition,
horospherical limit sets, Dirichlet polyhedra,
Torelli group, period map}

\begin{abstract}
We study the Hopf decomposition of subgroup actions of the
Teichm\"uller modular group on the Thurston boundary with respect to the Thurston measure class. Kaimanovich's general
Radon--Nikodym criteria allow us to describe the conservative and dissipative parts by the divergence and convergence of a series expressed in terms of extremal length. We identify the conservative part with the big horospherical limit set modulo null sets. For any basepoint with trivial stabilizer in the subgroup, we identify the dissipative part, modulo null sets, with the set of Dirichlet points and with the union of the subgroup translates of the ideal boundary of the associated
Dirichlet polyhedron. The description via Dirichlet polyhedra uses the fact that level sets of extremal-length ratios at distinct points of Teichm\"uller space have measure zero.

For the Torelli group of a closed surface of genus at least two, we use radial limits of the period map to prove that its conical limit set has measure zero. Combining our geometric characterization with the conservativity of its boundary action established by Choi, Gekhtman, Yang, and Zheng, we obtain that its big horospherical limit set has full measure and that the ideal boundary of each Dirichlet polyhedron has measure zero.
\end{abstract}

\maketitle
\tableofcontents

%%%%%%%%%%%%%%%%%%%%%%%%%%%%%%%%%%%%%%%%%%%%%
%%%%%%%%%%%%%%%%%%%%%%%%%%%%%%%%%%%%%%%%%%%%%
%%%%%%%%%%%%%%%%%%%%%%%%%%%%%%%%%%%%%%%%%%%%%
%%%%%%%%%%%%%%%%%%%%%%%%%%%%%%%%%%%%%%%%%%%%%

\section{Introduction}

The action of a subgroup of the mapping class group on the Thurston boundary can be studied from both geometric and measure-theoretic perspectives. In this paper, we relate these perspectives by describing the conservative and dissipative parts of the action in terms of extremal length and the geometry of subgroup orbits in Teichm\"uller space.
\Cref{table:dictionary} summarizes classical analogies between the geometry of the hyperbolic plane and that of Teichm\"uller space.
These analogies have served as a guiding principle in the author's
previous work and continue to inform the present study.
For example, this perspective underlies the author's alternative approach
to Krushkal's formula for the pluricomplex Green function
\cite{Miyachi2019}, as well as Poisson integral representations
\cite{Miyachi2023,miyachi2025bddpluriharmonicfunctionII-ToAppear}
and results on the radial limits of bounded pluriharmonic functions
\cite{Miyachi2024Bounded} on Teichm\"uller space.
\begin{table}
\centering
\begin{tabular}{|l|l|l|}
\hline
  & Hyperbolic plane $\mathbb{H}$ & $\teich_{g,m}$ \\
\hline\hline
Distance & Hyperbolic distance & Teichm\"uller--Kobayashi distance \\ \hline
Metric & Hyperbolic metric & Teichm\"uller--Kobayashi metric \\ \hline
Discrete groups & Fuchsian groups
& Subgroups of $\tmod(g,m)$ \\ \hline
Compactification & $\overline{\mathbb{H}}=\mathbb{H}\cup \hat{\mathbb{R}}$ ($\hat{\mathbb{R}}=\mathbb{R}\cup\{\infty\}$) & Thurston compactification
\\
\hline
Boundary & $\hat{\mathbb{R}}$ &
$\pml$
\\
\hline
Boundary measure & harmonic (visual) measure & normalized Thurston measure
\\
\hline
\end{tabular}
\caption{Analogies motivating the present research}
\label{table:dictionary}
\end{table}
% In the present paper, we ask how the Hopf decomposition of the boundary action is reflected in the geometry of subgroup orbits.

McCarthy and Papadopoulos studied limit sets and domains of discontinuity
for subgroup actions on the Thurston boundary
\cite{McCarthyPapadopoulos1989}. Their construction of fundamental
domains in Teichm\"uller space \cite{McCarthyPapadopoulos1996} provides
an analogue of Dirichlet polyhedra. Kent and Leininger
\cite{KentLeininger2008} studied conical limit points in connection with
convex cocompactness. Horospherical limit points in the Thurston boundary
were studied in \cite{miyachi2025functiontheorydynamicsergodic}
and in joint work with Ohshika \cite{miyachi2026limitsetsmappingclass}.
In that joint work, we give a complete classification of limit points in terms of the geometry of measured laminations.

Sullivan's and Tukia's descriptions of boundary actions of
Kleinian groups relate the Hopf decomposition to the geometry of horospherical limit sets \cite{Sullivan1981,Tukia1997}. These results motivate the question of whether an analogous geometric description holds for subgroup actions on the Thurston boundary.
Kaimanovich \cite{Kaimanovich2010} gave general criteria for the Hopf decomposition of nonsingular actions in terms of Radon--Nikodym derivatives.
Choi, Gekhtman, Yang, and Zheng \cite{choi2024confinedsubgroupsgroupscontracting} developed related results for groups with contracting elements; their Lemma~2.48 identifies the infinitely conservative part with a big horospherical limit set in their boundary framework.

A related measure-classification problem was studied by Choi and Kim \cite{ChoiKim2025InvariantMeasures}, who classified invariant Radon measures concentrated on the recurrence locus in the space of measured laminations for non-elementary subgroups of the mapping class group. Their results concern invariant measures on the non-projectivized space, while the present paper studies the Hopf decomposition of the action on the projectivized space with respect to the Thurston measure class.

\subsection{Main results}
We give two descriptions of the Hopf decomposition: one in terms of a series of Radon--Nikodym derivatives, and the other in terms of horospherical limit sets and Dirichlet points.

Let $\teich_{g,m}$ denote the Teichm\"uller space of Riemann surfaces of analytically finite type $(g,m)$, where $2g-2+m>0$ and $\convgenus=3g-3+m\ge 1$, and let $\pml=\pml_{g,m}$ denote the space of projective measured laminations on a fixed complete hyperbolic surface of genus $g$ with $m$ cusps. We identify $\pml$ with the Thurston boundary of $\teich_{g,m}$.
Let $\tmod(g,m)$ denote the Teichm\"uller modular group, namely the image of the mapping class group in the group of automorphisms of $\teich_{g,m}$.

\subsubsection{Hopf decomposition}
For $x\in\teich_{g,m}$, let $\ext_x(\lambda)$ denote the extremal length of a measured lamination $\lambda$ on $x$ (cf. \S\ref{subsec:HMdiffExtremalLength}).
The extremal-length unit ball in the space of measured laminations determines a probability measure $\PThursM^x$ on $\pml$ by projectivizing and normalizing the Thurston measure (see \eqref{eq:def_Th_measure}).
The measures $\PThursM^x$ are mutually absolutely continuous and define the Thurston measure class on $\pml$.
The action of $\tmod(g,m)$ is nonsingular with respect to this measure
class, and its Radon--Nikodym derivatives are given by
\[
\frac{d([\omega]_*\PThursM^x)}{d\PThursM^x}([\lambda])
=\left(
\frac{\ext_x(\lambda)}{\ext_{[\omega](x)}(\lambda)}
\right)^{\convgenus}
\qquad ([\omega]\in\tmod(g,m));
\]
see \eqref{eq:Radon-Nikodym2}.

This formula leads us to consider the sum of these derivatives over
a subgroup. For a nontrivial subgroup $G\le\tmod(g,m)$, define
\begin{align*}
\conservativepart(G)
&=\conservativepart_x(G)
=\left\{[\lambda]\in\pml\mathrel{}\middle|\mathrel{}
\sum_{[\omega]\in G}
\left(
\frac{\ext_x(\lambda)}
{\ext_{[\omega](x)}(\lambda)}
\right)^{\convgenus}
=\infty
\right\},\\
\dissipativepart(G)
&=\dissipativepart_x(G)
=\left\{[\lambda]\in\pml\mathrel{}\middle|\mathrel{}
\sum_{[\omega]\in G}
\left(
\frac{\ext_x(\lambda)}
{\ext_{[\omega](x)}(\lambda)}
\right)^{\convgenus}
<\infty
\right\}.
\end{align*}
These sets are independent of the choice of $x$
(see \Cref{lem:Cons-Diss-ElementaryProperty}).
Our first main result applies the Radon--Nikodym criterion to identify
these sets with the conservative and dissipative parts of the action.

\begin{theorem}[Hopf decomposition]
\label{thm:Hopf_decomposition_main}
Let $G\le\tmod(g,m)$ be a nontrivial subgroup.
For every $x\in\teich_{g,m}$, the decomposition
\[
\pml=\conservativepart(G)\sqcup\dissipativepart(G)
\]
agrees, modulo $\PThursM^x$-null sets, with the Hopf decomposition
of the nonsingular action of $G$ on $(\pml,\PThursM^x)$.
More precisely,
\begin{enumerate}
\item the action of $G$ on $\conservativepart(G)$ is conservative;
\item the action of $G$ on $\dissipativepart(G)$ is dissipative.
\end{enumerate}
\end{theorem}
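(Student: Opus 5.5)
The plan is to reduce the statement to Kaimanovich's Radon--Nikodym criterion \cite{Kaimanovich2010} for a nonsingular action of a countable group on a $\sigma$-finite measure space. For a nonsingular action of a countable group $G$ on $(X,\mu)$, that criterion says the following. Up to $\mu$-null sets, the conservative part of the Hopf decomposition is
\[
\Bigl\{\xi\in X \Bigm| \sum_{g\in G}\frac{d(g_*\mu)}{d\mu}(\xi)=\infty\Bigr\},
\]
and the dissipative part is the set where this series converges. We take $X=\pml$ and $\mu=\PThursM^x$; this is a probability measure, so it is $\sigma$-finite. Nonsingularity of the action of $\tmod(g,m)$, and hence of any subgroup $G$, is already recorded in \eqref{eq:Radon-Nikodym2}. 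Note also that $G$ is countable, since $\tmod(g,m)$ is finitely generated.

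First, we verify the Radon--Nikodym formula in the form quoted before the theorem. The measure $\PThursM^x$ is the pushforward to $\pml$ of the normalized Thurston measure of the extremal-length unit ball $\{\ext_x\le 1\}$ in $\ml$. The Thurston measure is $\tmod$-invariant and homogeneous of degree $2\convgenus$ (real dimension $6g-6+2m$). Also, $\ext$ scales quadratically in $\lambda$, and it satisfies the naturality $\ext_{[\omega](x)}(\omega(\lambda))=\ext_x(\lambda)$.

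Writing the measure of a cone over a set $A\subset\pml$ as an integral of $\ext_x(\lambda)^{-\convgenus}$ against a fixed homogeneous density, one gets
\[
\frac{d([\omega]_*\PThursM^x)}{d\PThursM^x}([\lambda])=\bigl(\ext_x(\lambda)/\ext_{[\omega](x)}(\lambda)\bigr)^{\convgenus}.
\]
This is well defined on projective classes, because the ratio is invariant under scaling $\lambda$. The formula holds everywhere, not just almost everywhere: the extremal length is continuous and positive on $\ml\setminus\{0\}$, so we obtain a canonical continuous version of each derivative. This matters, because $\conservativepart(G)$ and $\dissipativepart(G)$ are defined pointwise, and the sum over the countable group $G$ of fixed versions of the derivatives is then a well-defined Borel function with values in $[0,\infty]$.

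With these versions, $\conservativepart(G)$ is exactly the divergence set and $\dissipativepart(G)$ the convergence set appearing in Kaimanovich's criterion. The two sets are disjoint, they cover $\pml$, and both are Borel. They are also $G$-invariant: the cocycle identity gives
\[
\sum_{g}\frac{d(g_*\mu)}{d\mu}(h\xi)=\frac{d\mu}{d(h_*\mu)}\ldots,
\]
that is, the sum at $h\xi$ equals the sum at $\xi$ times a positive finite factor. Concretely, this is the identity $\ext_{[\omega](x)}(h\lambda)=\ext_{[h^{-1}\omega](x)}(\lambda)$ combined with reindexing the sum. By \Cref{lem:Cons-Diss-ElementaryProperty}, the sets do not depend on $x$ either, so the conclusion holds for every $x$ simultaneously.

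The two items then follow from the two halves of the criterion. The series diverges everywhere on $\conservativepart(G)$, so the restricted action is conservative there. The series converges on $\dissipativepart(G)$, so that set is dissipative; it can be written as a countable union of wandering sets, for instance via the sets where the sum is at most $n$.

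The step I expect to be the main obstacle is matching the hypotheses and normalizations of Kaimanovich's criterion precisely. One issue is which convention applies: $d(g_*\mu)/d\mu$ versus $d(g^{-1}_*\mu)/d\mu$. Summing over all of $G$ makes the two choices equivalent after replacing $g$ by $g^{-1}$, but this has to be checked against the formula above. The other issue is that the statement is pointwise, whereas the Hopf decomposition is only defined modulo null sets. I would handle this by stating the identification modulo $\PThursM^x$-null sets, using the canonical continuous versions of the derivatives described above.
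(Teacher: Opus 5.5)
There is a genuine gap in item (2), and it comes from how you state Kaimanovich's criterion. His Theorem~29 identifies the divergence set $\{\sum_g \frac{d(g_*\mu)}{d\mu}=\infty\}$ with the \emph{infinitely} conservative part $\mathtt{cons}_\infty$, not with the conservative part of the Hopf decomposition. The convergence set is the complement of $\mathtt{cons}_\infty$. In general this complement contains the points of the discontinual part whose stabilizers are finite but nontrivial. Such points cannot be covered by translates of a wandering set in the sense of \S\ref{subsec:conservativity_dissipativity}, where one needs $P(B\cap\gamma B)=0$ for all $\gamma\neq\mathrm{id}$. For example, let $\mathbb{Z}/2$ act trivially on $[0,1]$: the series equals $2$ everywhere, yet every wandering set is null. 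So ``convergence implies dissipative'' is false in general.

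Your proposed justification does not repair this, because the sets $\{\sum\le n\}$ are not wandering. Take $\mathbb{Z}$ acting on $\mathbb{R}$ by translation with a Gaussian measure. The sum is comparable to $e^{\xi^2}$, so these sets are intervals of length tending to infinity, and they overlap their own translates. What summability actually gives is $\int_A\sum_g\frac{d(g_*\mu)}{d\mu}\,d\mu=\sum_g\mu(g^{-1}A)$. By Borel--Cantelli, almost every orbit then visits such a set only finitely often. Turning this into a wandering set requires almost every point to have trivial stabilizer.

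The missing ingredient is therefore the essential freeness of the action, \Cref{lem:Fix_null}: for every $[\omega]\in\tmod(g,m)\setminus\{\mathrm{id}\}$, $\operatorname{Fix}([\omega])$ is $\PThursM^x$-null. This is not formal, and the paper proves it with geometric input. A pseudo-Anosov class has only two fixed points. For a finite-order element, or for a power of a reducible element that fixes a reduction curve $\alpha$, consider a fixed projective class with $[\omega](\lambda)=c\lambda$. Either $i(\alpha,\lambda)=0$, which is a null set because filling laminations have full measure, or $c=1$. In the latter case $\lambda$ lies in the level locus $\pml(x_0,[\omega]^{-1}(x_0))$, which is null by \Cref{thm:level_set}.

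Granting this, the set $A_0$ of points with nontrivial $G$-stabilizer is null because $G$ is countable. The complement of $\mathtt{cons}_\infty$ is then, modulo null sets, the free discontinual part, and Kaimanovich's Theorem~14 and Corollary~15 show it is completely dissipative. The same point is needed when $G$ is finite. There $\dissipativepart(G)=\pml$, and total dissipativity requires restricting to a conull invariant set on which the action is free and taking a Borel fundamental domain.

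Item (1) and your computation of the Jacobians are fine. The convention issue you flagged as the main obstacle is harmless; the stabilizer issue is the real one.
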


Our second main result gives a geometric characterization of these
two parts, in analogy with classical results for Kleinian groups.
Sullivan \cite{Sullivan1981} identified the conservative part of the boundary action, with respect to spherical Lebesgue measure, with the horospherical limit set modulo null sets.
Tukia \cite[Theorem~1]{Tukia1997} extended this description to non-atomic conformal measures by using the big horospherical limit set.
% The following theorem gives the corresponding description of the conservative part on the Thurston boundary and identifies the dissipative part with the set of Dirichlet points, modulo Thurston null sets.
In the setting of the Thurston boundary, we obtain a parallel
description of the Hopf decomposition. The conservative part
agrees with the big horospherical limit set. For a basepoint
with trivial stabilizer, the dissipative part agrees with
the set of Dirichlet points and with the union of the
subgroup translates of the ideal boundary of the associated
Dirichlet polyhedron. All these identifications are
understood modulo Thurston null sets.

The big horospherical limit set $\Lambda_H(G)$ consists
of the boundary points for which some horoball centered at that point
contains infinitely many points of a $G$-orbit.
For $x_0\in\teich_{g,m}$, let $\operatorname{Dir}_G(x_0)$ denote
the set of Dirichlet points: these are the boundary points whose
associated Busemann function attains its infimum on $G(x_0)$.
For uniquely ergodic $[\lambda]$, this is equivalent to the
attainment of the minimum of $\ext_{[\omega](x_0)}(\lambda)$ over
$[\omega]\in G$; see \eqref{eq:horo_definition4}.

\begin{theorem}[Geometric characterization of the Hopf decomposition]
\label{thm:main_classfication_limit_sets}
Let $G\le\tmod(g,m)$ be a nontrivial subgroup.
Then $\conservativepart(G)$ agrees with $\Lambda_H(G)$ modulo
Thurston null sets.
If $x_0\in\teich_{g,m}$ satisfies
$\operatorname{Stab}_G(x_0)=\{\mathrm{id}\}$, then
$\dissipativepart(G)$ agrees with $\operatorname{Dir}_G(x_0)$
modulo Thurston null sets.
\end{theorem}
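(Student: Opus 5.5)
Since $\PThursM^x$ gives full measure to the uniquely ergodic laminations, we work on that set throughout and discard its complement as a Thurston null set. For such $[\lambda]$ the Busemann function at $[\lambda]$ is, up to an additive constant depending on the normalization, $y\mapsto \frac12\log\ext_y(\lambda)$. This is the content of \eqref{eq:horo_definition4}. So horoballs centered at $[\lambda]$ are the sublevel sets $\{y\mid \ext_y(\lambda)<c\}$. It follows that $[\lambda]\in\Lambda_H(G)$ exactly when there is $c$ with $\ext_{[\omega](x)}(\lambda)<c$ for infinitely many $[\omega]\in G$. Here we use that $G$ acts properly on $\teich_{g,m}$ with finite stabilizers, so infinitely many orbit points correspond to infinitely many group elements. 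Likewise, $[\lambda]\in\operatorname{Dir}_G(x_0)$ exactly when $\min_{[\omega]\in G}\ext_{[\omega](x_0)}(\lambda)$ is attained.

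\emph{Conservative part.} If $[\lambda]\in\Lambda_H(G)$, infinitely many terms of the series are bounded below by $(\ext_x(\lambda)/c)^{\convgenus}$, so the series diverges. Hence $\Lambda_H(G)\cap\{\text{u.e.}\}\subset\conservativepart(G)$. For the converse inclusion modulo null sets, I would use \Cref{thm:Hopf_decomposition_main}. Off $\Lambda_H(G)$ we have $\ext_{[\omega](x)}(\lambda)\to\infty$ along $G$. Consider the sets $E_n=\{[\lambda]\mid \inf_G\ext_{[\omega](x)}(\lambda)/\ext_x(\lambda)\ \text{attained}\}$; more simply, consider the set $W$ of $[\lambda]$ not in $\Lambda_H$. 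For $[\lambda]\in W$ the infimum over $G$ of $\ext_{[\omega](x)}(\lambda)$ is attained at finitely many elements. Choose a measurable selection $[\lambda]\mapsto [\omega_\lambda]$ of a minimizer, with ties broken by a fixed enumeration of $G$, and set $F=\{[\lambda]\in W\mid \omega_\lambda=\mathrm{id}\}$. The relation $\ext_{[\omega](x)}(\lambda)=\ext_x([\omega]^{-1}\lambda)$ shows that $W=\bigcup_{g\in G} gF$. To show $W$ is dissipative I would rather use the Radon--Nikodym criterion: by Tukia/Kaimanovich, conservative points have divergent sums. So I must show the series converges a.e. on $W$. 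Equivalently, $W\cap\conservativepart$ is null. Suppose $A\subset W\cap \conservativepart$ has positive measure. By conservativity, almost every point of $A$ returns to $A$ infinitely often. Restricting to the subset where the minimizer value lies in a fixed range $[a,b]$, returns $g\lambda\in A$ force $\ext_x(g^{-1}\cdot)$-bounds which put infinitely many orbit points in a fixed horoball. This contradicts membership in $W$. So $W\cap\conservativepart$ is null, giving $\conservativepart=\Lambda_H$ mod null sets.

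\emph{Dissipative part.} Trivially $\operatorname{Dir}_G(x_0)\supset W$ (for u.e. points), and $\operatorname{Dir}_G(x_0)\cap\Lambda_H(G)$ can be nonempty, so it must be shown null. The key input is the fact from the abstract that level sets $\{[\lambda]\mid \ext_y(\lambda)=\ext_{y'}(\lambda)\}$ have measure zero for $y\ne y'$. Since $\operatorname{Stab}_G(x_0)$ is trivial, the points $[\omega](x_0)$ are pairwise distinct. Taking the countable union over pairs shows that for a.e. $[\lambda]$ all the values $\ext_{[\omega](x_0)}(\lambda)$ are distinct. Then the minimizer is unique, and $D=\{[\lambda]\mid \text{minimum attained at }\mathrm{id}\}$ satisfies that the sets $gD$ are pairwise disjoint up to null sets. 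So $D$ is a wandering set, and $\operatorname{Dir}_G(x_0)=\bigcup_g gD$ lies in the dissipative part, hence in $\dissipativepart(G)$ modulo null sets. Conversely, $\dissipativepart(G)\subset W\subset\operatorname{Dir}_G(x_0)$ up to null sets by the previous paragraph.

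\emph{Main obstacle.} The hardest step is the recurrence argument that $W\cap\conservativepart$ is null, i.e. converting infinite returns into bounded extremal length along infinitely many orbit points. I would first try a cleaner route: $W=\bigcup_g gF$, and $F$ meets each orbit in finitely many points (at most the number of minimizers). Partitioning $F$ by the number of minimizers, each piece is a measurable set meeting orbits at most $k$ times. Such a set lies in the dissipative part by the standard Hopf argument, since a conservative part cannot contain a positive-measure set meeting a.e. orbit finitely often.
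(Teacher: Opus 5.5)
Your argument is correct, but both halves follow a different route from the paper's.

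\textbf{Conservative part.} The paper uses the \emph{counting} form of Kaimanovich's criterion \cite[Theorem~29]{Kaimanovich2010}: modulo null sets, the infinitely conservative part is the set where infinitely many Radon--Nikodym derivatives exceed some fixed $a>0$. By \eqref{eq:Radon-Nikodym2} and \eqref{eq:busemann_ue}, on $\pml^{ue}$ this condition is literally the definition of $\Lambda_H(G)$.

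You use only the divergence form (Theorem~\ref{thm:Hopf_decomposition_main}). You get $\Lambda_H(G)\cap\pml^{ue}\subset\conservativepart(G)$ by comparing terms, and show that $W=\pml^{ue}\setminus\Lambda_H(G)$ meets $\conservativepart(G)$ in a null set by a finite-return argument. Both of your versions of that argument work:
\begin{itemize}
\item With $h=\horo^{x}_G$ one has $h([\eta]\lambda)=h(\lambda)\,\ext_x(\lambda)/\ext_{[\eta]^{-1}(x)}(\lambda)$. Hence returns inside $\{a\le h\le b\}$ put $[\eta]^{-1}(x)$ in a fixed horoball.
\item $[\eta]([\lambda])\in F$ forces $[\eta]^{-1}$ into the finite set $M(\lambda)$ of minimizers of $[\omega]\mapsto\ext_{[\omega](x)}(\lambda)$.
\end{itemize}
In effect you reprove the needed half of Kaimanovich's counting criterion in this setting. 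This is more self-contained than the paper's one-line citation, at the cost of some length.

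\textbf{Dissipative part.} The paper passes through the McCarthy--Papadopoulos polyhedron: it compares rays in $\Dirichlet_G(x_0)$ with horoballs, proves that $\Dirichlet^\infty_G(x_0)$ is wandering, applies a Tonelli estimate on the strict-minimizer set $E$, and then uses Lemma~\ref{lem:DirichletSets_Polyhedrons}. Your $D$ equals that $E$ up to a null set, and its wandering property comes from the same Theorem~\ref{thm:level_set}. You replace the Tonelli estimate by the fact that the conservative part carries no positive-measure wandering set. This is shorter and suffices for the statement as posed. The paper's detour buys the stronger Theorem~\ref{thm:Dissipative_part} and criterion (e) of Corollary~\ref{coro:conservativity_Dirichletset}. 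Note also that the inclusion $\dissipativepart(G)\cap\pml^{ue}\subset W\subset\operatorname{Dir}_G(x_0)$ needs only your easy inclusion, not the hard direction.

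\textbf{Two small repairs.}
\begin{itemize}
\item Define $F=\{[\lambda]\in W\mid \mathrm{id}\in M(\lambda)\}$ rather than using a tie-breaking selection. In the first assertion $\operatorname{Stab}_G(x)$ may be nontrivial. If it is $\{\mathrm{id},s\}$ and $s$ precedes $\mathrm{id}$ in your enumeration, then $F$ is empty and $W\ne\bigcup_g gF$.
\item The step ``almost every point of $A$ returns to $A$ infinitely often'' does not follow verbatim from the set-wise definition of conservativity in \S\ref{subsec:conservativity_dissipativity}. Justify it by the standard argument that partitions $A$ according to its (countably many possible) finite return sets.
\end{itemize}
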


Here and below, null sets on $\pml$ are understood with respect to the Thurston measure class, or equivalently with respect to any of the measures $\PThursM^x$.
Under the same assumption on $x_0$, the dissipative part also agrees, modulo null sets, with the union of the $G$-translates of the ideal boundary of the Dirichlet polyhedron (in the sense of McCarthy--Papadopoulos \cite{McCarthyPapadopoulos1996}) centered at $x_0$ (see \Cref{thm:Dissipative_part}).
Both main theorems are proved in \S\ref{sec:Hopf_decomposition}.

The identification of the conservative part uses Kaimanovich's
Radon--Nikodym criterion \cite[Theorem~29]{Kaimanovich2010}, together
with the essential freeness of the boundary action and the expression of the relevant Busemann functions in terms of extremal length on the uniquely ergodic locus.
For the description of the dissipative part, we use the nullity of level loci of extremal-length ratios at distinct points of Teichm\"uller space (\Cref{thm:level_set}). 
In particular, for almost every $[\lambda]\in\pml$
and any representative $\lambda\in\ml\setminus\{0\}$,
the function $y\mapsto\ext_y(\lambda)$ takes distinct
values at distinct points of a fixed subgroup orbit.
Consequently, whenever its minimum over the orbit is attained,
the minimizing orbit point is unique.
This conclusion is independent of the choice of representative.
% In particular, outside a Thurston null set, no two distinct points of a fixed subgroup orbit have equal extremal length for a representative of the same projective measured lamination. Thus, whenever the minimum over the orbit is attained, the minimizing orbit point is unique.
This observation is used to show that the ideal boundary of a Dirichlet polyhedron is wandering and to identify its translates with the dissipative part modulo null sets. 
The nullity theorem follows from local real analyticity
in common train-track coordinates and the fact that
extremal length functions at distinct points cannot
be proportional on a nonempty open set.

In \cite[Proposition~8.2]{miyachi2025functiontheorydynamicsergodic},
the author showed that the big horospherical limit set contains
no wandering set of positive measure.
The present results identify both parts of the Hopf decomposition
and describe the dissipative part using Dirichlet polyhedra.
In particular, \Cref{coro:conservativity_Dirichletset} establishes the equivalence between conservativity, full measure of
the big horospherical limit set, and nullity of the ideal boundary
of a Dirichlet polyhedron. This establishes the equivalence asked for in \cite[Problem~9]{miyachi2025functiontheorydynamicsergodic},
with the boundary fundamental domain interpreted as the ideal boundary of a Dirichlet polyhedron centered at a basepoint with trivial stabilizer.

\subsubsection{Dynamics of the Torelli group}
As an application, in \S\ref{sec:dynamics_of_Torelli_group} we consider the Torelli group $\torelliG_g$ of a closed surface of genus $g\ge 2$.

\begin{corollary}[Dynamics of the Torelli group]
\label{coro:torelli_main}
For $g\ge 2$, the action of $\torelliG_g$ on $\pml_{g,0}$ is
conservative but not ergodic with respect to the Thurston measure class, and its big horospherical limit set has full measure.
Moreover, we have
\begin{itemize}
\item[(a)] the conical limit set of the Torelli group has measure zero;
\item[(b)] the ideal boundary of a Dirichlet polyhedron of the Torelli group has measure zero for every choice of basepoint.
\end{itemize}
\end{corollary}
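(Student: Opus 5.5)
The corollary combines external input with the two main theorems. The plan is as follows.

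\emph{Step 1: conservativity and non-ergodicity.} Conservativity of the action of $\torelliG_g$ on $\pml_{g,0}$ with respect to the Thurston measure class is taken from Choi, Gekhtman, Yang, and Zheng \cite{choi2024confinedsubgroupsgroupscontracting}. Since $\torelliG_g$ is normal (hence confined) in the mapping class group, which contains contracting elements, their boundary-action result applies. To transfer it to our setting, I would check that their boundary measure is in the Thurston class on the uniquely ergodic locus.

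For non-ergodicity, I would use the homology class of a lamination. Each $\lambda\in\ml$ has a homology class in $H_1(\Sigma_g;\mathbb{R})$, given by the transverse cocycle of its orientation cover, or more simply by the $\mathbb{Z}/2$-invariant intersection-with-cycles data. This class is $\torelliG_g$-invariant. Passing to a projectivized or absolute-valued invariant, such as the ratio of $|\langle\lambda,a_1\rangle|$ to $|\langle\lambda,a_2\rangle|$ for fixed homology classes, gives a measurable $\torelliG_g$-invariant function on $\pml$. I would argue this function is nonconstant on a set of positive measure, because it is real analytic and nonconstant in train-track coordinates.

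\emph{Step 2: full measure of $\Lambda_H$.} By \Cref{thm:Hopf_decomposition_main}, conservativity means $\dissipativepart(\torelliG_g)$ is null. \Cref{thm:main_classfication_limit_sets} then shows $\Lambda_H(\torelliG_g)$ has full measure.

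\emph{Step 3: item (b).} Since $\torelliG_g$ is torsion free for $g\ge2$ and acts properly discontinuously, every basepoint $x_0$ has trivial stabilizer. \Cref{thm:Dissipative_part} identifies the union of translates of the ideal boundary of the Dirichlet polyhedron with $\dissipativepart$ modulo null sets. Hence the ideal boundary is null for every basepoint.

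\emph{Step 4: item (a).} This is the main obstacle. I would use the period map $\Pi:\teich_g\to\mathfrak{S}_g$ (Siegel space). It is $\torelliG_g$-invariant, and holomorphic hence 1-Lipschitz from the Teichm\"uller--Kobayashi distance to the Siegel (Kobayashi) distance.

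Take a conical limit point $[\lambda]$. Then there are $[\omega_n]\in\torelliG_g$ with $[\omega_n](x_0)$ within bounded distance of the Teichm\"uller ray toward $[\lambda]$ and tending to infinity along it. Consequently $\Pi$ along the ray stays within bounded Siegel distance of the single point $\Pi(x_0)$ at a sequence of times going to infinity, so the ray does not have a radial limit escaping to the boundary of $\mathfrak{S}_g$.

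On the other hand, I would show that for almost every $[\lambda]$ (for instance, uniquely ergodic $\lambda$ with nondegenerate homology class, which is a full-measure condition), the period matrix along the Teichm\"uller ray diverges in $\mathfrak{S}_g$. The route is to express the limit via the vertical foliation's cohomology class: along the ray, the imaginary parts of periods of the Hubbard--Masur differential degenerate, and the class of $\lambda$ forces $\Pi(r(t))$ to leave compacta.

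The delicate point is making this radial-limit statement hold almost everywhere with respect to the Thurston measure. I would first try to reduce it via Masur's criterion and Hodge-norm estimates (Forni), which make the Hodge norm of the class of the horizontal foliation decay. This contradicts bounded distance to $\Pi(x_0)$, and therefore the conical set is null.
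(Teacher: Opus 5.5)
\textbf{Overall.} Your Steps 2 and 3, and the appeal to \cite{choi2024confinedsubgroupsgroupscontracting} for conservativity, match the paper, which routes them through \Cref{coro:conservativity_Dirichletset}. Your observation that a conical limit point keeps $\Pi(\ray^{\lambda}_{x_1}(t_n))$ within bounded Siegel distance of $\Pi(x_0)$ is exactly the implication (a)$\Rightarrow$(b) of \Cref{prop:conical_limit_set_torelli_group}. There are, however, two genuine gaps, both caused by the same orientability problem.

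\textbf{The conical limit set (Step 4).} You never show that $\Pi$ leaves every compact subset of $\mathfrak{H}_g$ along almost every ray, and the mechanism you sketch cannot do it. On a closed surface, a lamination of rational depth zero has only ideal triangles as complementary regions. A region with an odd number of spikes obstructs any consistent orientation of the leaves. Hence Thurston-almost every $\lambda$ is non-orientable and has no class in $H_1(\Sigma_g;\mathbb{R})$: the anti-invariant class on the orientation double cover pushes forward to zero. Likewise, the horizontal foliation has a cohomology class on $\Sigma_g$ only for squares of abelian differentials, which form a null set. So a Forni-type decay estimate for ``the class of the horizontal foliation'' says nothing about almost every ray.

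Repairing your route would need genuinely different input. One option is positivity of the top Lyapunov exponent of the Kontsevich--Zorich cocycle on $H^1(\Sigma_g;\mathbb{R})$ over the principal stratum of quadratic differentials (e.g.\ from the Eskin--Kontsevich--Zorich sum formula). This would have to be combined with Oseledets' theorem and a disintegration plus Masur-asymptoticity argument as in \Cref{lem:radial_limit}(e).

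The paper avoids all of this, in four steps:
\begin{enumerate}
\item It uses the almost-everywhere existence of the radial limit $\Pi^*$ (\Cref{lem:radial_limit}).
\item It rules out atoms of $(\Pi^*)_*\PThursM^{x}$ in $\mathfrak{H}_g$ by the identity theorem for radial limits.
\item It shows, via equivariance of $\Pi^*$, ergodicity of $\tmod(g)$, and the standard Borel structure of $\psyp(2g,\mathbb{Z})\backslash\mathfrak{H}_g$, that $\Pi^*$ would otherwise concentrate on a single countable orbit (\Cref{lem:null_siegel}).
\item Existence of the full limit then turns your ``bounded at times $t_n$'' observation into an immediate contradiction.
\end{enumerate}

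\textbf{Non-ergodicity (Step 1).} The same issue breaks this argument. The ratio $|\langle\lambda,a_1\rangle|/|\langle\lambda,a_2\rangle|$ is undefined for almost every $[\lambda]$, because the sign ambiguity of a non-orientable lamination is local, not global. The quantities that are defined, such as $i(\lambda,\alpha)$, are not $\torelliG_g$-invariant: Torelli elements preserve homology classes of curves but not their isotopy classes.

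The paper simply cites \cite{Miyachi2024Bounded} here. Within this paper's framework, a valid invariant is the radial limit $(\Cayley\circ\Pi)^*$. It is $\torelliG_g$-invariant by the equivariance in the proof of \Cref{lem:radial_limit}, since $\symrep$ is trivial on $\torelliG_g$. It is not essentially constant, by the identity theorem for radial limits, because $\Cayley\circ\Pi$ is a nonconstant bounded holomorphic function on $\teich_g$.
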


Since the Torelli group is torsion-free, the stabilizer condition in the geometric characterization is satisfied at every basepoint.
\Cref{table:comparison} gives a dynamical comparison among $\operatorname{PSL}_2(\mathbb{Z})$, the Teichm\"uller modular group and the Torelli group.

\begin{table}
\centering
\begin{tabular}{|c|c|c|c|}
\hline
& $\operatorname{PSL}_2(\mathbb{Z})$
& $\tmod(g)$
& $\mathcal{I}_g$ \\ \hline\hline
Limit set
& $\partial \mathbb{H}$
& \multicolumn{2}{c|}{$\pml$} \\
\hline
Ergodicity
& \multicolumn{2}{c|}{Yes}
& No \\ \hline
Conservativity
& \multicolumn{3}{c|}{Yes}
 \\ \hline
Measure of $\Lambda_C$
& \multicolumn{2}{c|}{$1$}
& $0$ \\ \hline
Measure of $\Lambda_H$
& \multicolumn{3}{c|}{$1$}\\ \hline
\end{tabular}
\caption{A dynamical comparison for $g\ge2$. For $\operatorname{PSL}_2(\mathbb Z)$,
measure refers to normalized visual measure on $\partial\mathbb H$;
for the other two groups, it refers to $\PThursM^{x_0}$ on $\pml_{g,0}$.
Ergodicity and conservativity are understood with respect to these
measure classes.}
\label{table:comparison}
\end{table}
The non-ergodicity of this action was established in
\cite{Miyachi2024Bounded}.
Since the Torelli group is an infinite normal subgroup of the mapping
class group, it is confined. Its conservativity therefore follows from
\cite[Theorem~1.6(1)]{choi2024confinedsubgroupsgroupscontracting}.
The full-measure assertion for the big horospherical limit set and
the nullity of the ideal boundary of a Dirichlet polyhedron then
follow from \Cref{coro:conservativity_Dirichletset}.
The measure-zero statement for the conical limit set is proved
using radial limits of the period map
(\Cref{cor:conical_limit_sets}).
The proof relates conical limit points to the limiting
behavior of the period map along Teichm\"uller geodesic rays.
Thus, almost every boundary point is a big horospherical limit point
of the Torelli group, whereas its conical limit set has measure zero.
This distinguishes the horospherical behavior that describes
conservativity of the boundary action from conical recurrence.

\subsection*{Acknowledgements}
The author is deeply grateful to Professor Athanase Papadopoulos and Professor Ken'ichi Ohshika for their continuous encouragement and generous support.

\section{Notation and preliminaries}
Henceforth, we fix non-negative integers $g$ and $m$ such that $2g-2+m>0$ and $\convgenus=3g-3+m>0$.
Let $\Sigma_{g,m}$ be an oriented surface of genus $g$ with $m$ punctures.
When $m=0$, we write $\Sigma_g$ for $\Sigma_{g,0}$.

\subsection{Teichm\"uller space}
A \emph{marked Riemann surface} $(M,f)$ of type $(g,m)$ is a pair consisting of an analytically finite Riemann surface $M$ of type $(g,m)$ and an orientation-preserving homeomorphism $f\colon \Sigma_{g,m}\to M$.
Two marked Riemann surfaces $(M_1,f_1)$ and $(M_2,f_2)$ are said to be \emph{Teichm\"uller equivalent} if there exists a biholomorphism $h\colon M_1\to M_2$ such that $h\circ f_1$ is homotopic to $f_2$ rel punctures.
By the uniformization theorem, any $x=(M,f)$ can be regarded as a marked complete hyperbolic surface of finite area by equipping $M$ with the complete hyperbolic metric in its conformal class.

\subsubsection{Topology of $\teich_{g,m}$}
The \emph{Teichm\"uller space} $\teich_{g,m}$ of Riemann surfaces of analytically finite type $(g,m)$ is the set of Teichm\"uller equivalence classes of marked Riemann surfaces of type $(g,m)$.
The Teichm\"uller space admits a canonical distance, called the \emph{Teichm\"uller distance},
defined by
\[
d_T(x_1,x_2)=\frac{1}{2}\log \inf_h K(h)
\]
for $x_i=(M_i,f_i)\in \teich_{g,m}$ ($i=1,2$), where $h$ runs over all quasiconformal mappings $h\colon M_1\to M_2$ such that $h\circ f_1$ is homotopic to $f_2$ rel punctures and $K(h)$ is the maximal dilatation of $h$.
The metric space $(\teich_{g,m},d_T)$ is known to be complete and homeomorphic to Euclidean space of dimension $2\convgenus$.

\subsection{Measured laminations}
\label{subsec:measured_laminations}
Let $\mathcal{S}$ be the set of homotopy classes of 
essential simple closed curves on $\Sigma_{g,m}$.
Let $i(\alpha,\beta)$ denote the \emph{(geometric) intersection number} of simple closed curves $\alpha,\beta\in \mathcal{S}$.
For $t,s\ge0$ and $\alpha,\beta\in\mathcal S$, set
\begin{equation}
\label{eq:intersection-number-WS}
i(t\alpha,s\beta)=ts\,i(\alpha,\beta).
\end{equation}

\subsubsection{Measured laminations}
Henceforth, we fix a complete hyperbolic structure of finite area on $\Sigma_{g,m}$.
A \emph{geodesic lamination} $L$ on $\Sigma_{g,m}$ is a nonempty closed subset that is a disjoint union of complete simple geodesics, where a geodesic is said to be \emph{complete} if it is either closed or has infinite length in both directions.
The geodesics in $L$ are called the \emph{leaves} of $L$.
A \emph{transverse measure} on a geodesic lamination $L$ is an assignment of a Borel measure to each arc transverse to $L$, subject to the following two conditions: if a transverse arc $k'$ is contained in a transverse arc $k$, the measure assigned to $k'$ is the restriction of the measure assigned to $k$; and if two transverse arcs $k$ and $k'$ are homotopic through transverse arcs, the homotopy carries the measure assigned to $k$ to that assigned to $k'$.
A transverse measure on a geodesic lamination $L$ is said to have \emph{full support} if the support of the measure assigned to each transverse arc $k$ is exactly $k\cap L$.

A \emph{measured lamination} $\lambda$ is a pair consisting of a compact geodesic lamination $|\lambda|$, called the \emph{support} of $\lambda$, and a transverse measure of full support on $|\lambda|$.
For simplicity, we use the same symbol $\lambda$ for the measured lamination and its transverse measure.

Let $\ml$ be the set of measured laminations on $\Sigma_{g,m}$, together with the zero measured lamination $0$.
A weighted simple closed curve $t\alpha$ is identified with
a measured lamination supported on the simple closed geodesic homotopic to $\alpha$, with transverse measure equal to $t$ times the counting measure on its intersections with transverse arcs. A measured lamination $\lambda$ is said to be \emph{essentially complete} if its support is maximal among geodesic laminations. Equivalently, $\lambda$ is essentially complete if and only if every component of $\Sigma_{g,m}\setminus |\lambda|$ is either an ideal triangle or a punctured monogon (cf. \cite[\S9.5]{Thurston-LectureNote}).

\subsubsection{Topology on $\ml$}
Birman and Series \cite{BirmanSeries1985} showed that the union of all complete simple geodesics on $\Sigma_{g,m}$ has Hausdorff dimension $1$.
Consequently, one may choose geodesic arcs transverse to every complete simple geodesic; we call such arcs \emph{generic}.
The topology on $\ml$ is defined as follows:
a sequence $\{\lambda_n\}_n\subset \ml$ converges to $\lambda\in \ml$ if, for every generic transverse arc $k$ and every continuous function $\varphi$ on $k$,
\[
\int_k \varphi\,d\lambda_n\to \int_k\varphi\,d\lambda
\]
as $n\to \infty$ (cf. \cite[Part II]{Bonahon2001}).
The intersection number \eqref{eq:intersection-number-WS}
on weighted simple closed curves extends continuously to $\ml\times \ml$.
Indeed, for $\lambda\in \ml$ and $\alpha\in \mathcal{S}$,
\[
i(\lambda,\alpha)=\inf_{\alpha'\in \alpha}\int_{\alpha'}d\lambda
\]
(e.g. \cite{Bonahon2001} and \cite{Thurston-LectureNote}).

The multiplicative group $\mathbb{R}_{>0}$ acts on $\ml$ by multiplication. The quotient space $\pml=(\ml-\{0\})/\mathbb{R}_{>0}$ is called the \emph{space of projective measured laminations}.
Thurston showed that $\ml$ and $\pml$ are homeomorphic to $\mathbb{R}^{2\convgenus}$ and $\mathbb{S}^{2\convgenus-1}$, respectively.

The space $\ml$ of measured laminations is embedded in the space of non-negative functions on $\mathcal{S}$ by
\begin{equation}
\label{eq:ml-emb}
\ml\ni \lambda\mapsto (\mathcal{S}\ni \alpha\mapsto i(\lambda,\alpha))\in \mathbb{R}_{\ge 0}^{\mathcal{S}}.
\end{equation}

\subsubsection{Train track coordinates}
We follow Penner and Harer \cite{PennerHarer1992}
for terminology and conventions concerning train tracks.
We recall the notions used below.

A train track on $\Sigma_{g,m}$ is an embedded $1$-complex $\tau$ satisfying the following properties. Each edge (called a \emph{branch}) is a smooth path with well-defined tangent vectors at the endpoints, and at any vertex (called a \emph{switch}) the incident edges are mutually tangent. The tangent vector at the switch pointing toward the interior of an edge can have two possible directions, and this divides the ends of edges at the switch into two sets, neither of which is permitted to be empty. We call them ``incoming'' and ``outgoing.'' The valence of each
switch is at least 3, except possibly for one bivalent switch in a closed
curve component. Finally, we require that the components $R$ of $\Sigma_{g,m}\setminus \tau$ have negative generalized Euler characteristic, defined as the usual Euler characteristic $\chi(R)$ minus $1/2$ for every outward-pointing cusp, plus $1/2$ for every inward-pointing cusp (cf. \cite[\S1.1]{PennerHarer1992}).

For a train track $\tau$, a \emph{weight} on $\tau$ is a function $\mu$ on the set of branches of $\tau$ such that, at every switch $v$ of $\tau$, it satisfies the following \emph{switch condition}
\[
\mu(b_1)+\cdots+\mu(b_r)=\mu(b_{r+1})+\cdots+\mu(b_{r+t}),
\]
where $b_1,\ldots,b_r$ are the incoming branch ends incident at $v$ and $b_{r+1},\ldots,b_{r+t}$ are the outgoing branch ends.
We denote by $W(\tau)$ the vector space of weights on $\tau$ and by $E(\tau)\subset W(\tau)$ the cone of non-negative weights on $\tau$.
An element of $E(\tau)$ is also called a \emph{transverse measure} on $\tau$. A pair $(\tau,\mu)$ consisting of a train track $\tau$ and a transverse measure $\mu$ on $\tau$ is called a \emph{measured train track}.
A measured train track $(\tau,\mu)$ is said to be \emph{positive} if $\mu(b)>0$ for every branch $b$ of $\tau$.

A train track $\tau$ is called \emph{recurrent} if it admits a positive transverse measure (cf. \cite[\S1.3]{PennerHarer1992}). Let $b(\tau)$ and $s(\tau)$ denote the numbers of branches and switches of $\tau$, respectively, and let $o(\tau)$ denote the number of orientable connected components of $\tau$. Then $\dim W(\tau)=b(\tau)-s(\tau)+o(\tau)$ (cf. \cite[Lemma 2.1.1]{PennerHarer1992}). In particular, if $\tau$ is connected, then $\dim W(\tau)=b(\tau)-s(\tau)$ when $\tau$ is non-orientable, whereas $\dim W(\tau)=b(\tau)-s(\tau)+1$ when $\tau$ is orientable.

We say that a measured lamination $\lambda$ on $\Sigma_{g,m}$ is \emph{carried} by a train track $\tau$ if there is a $C^1$ map $\phi\colon \Sigma_{g,m}\to \Sigma_{g,m}$, called a \emph{supporting map}, such that $\phi(|\lambda|)\subset \tau$, $\phi$ is homotopic to the identity, and the restriction of the differential $d\phi$ to the tangent line to leaves of $|\lambda|$ at $p$ is nonzero for every $p\in |\lambda|$. 
For a train track $\tau$, we denote by $\ml(\tau)$ the set of measured laminations on $\Sigma_{g,m}$ that are carried by $\tau$.

A measured lamination $\lambda$ carried by $\tau$ determines a transverse measure $\mu=\mu_\lambda\in E(\tau)$ independently of the choice of supporting map (cf. \cite[Proposition 1.7.5]{PennerHarer1992}).
Conversely, every transverse measure $\mu\in E(\tau)$ determines a measured lamination $\lambda_\mu$ on $\Sigma_{g,m}$, with the zero weight corresponding to the zero measured lamination (cf. \cite[Construction 1.7.7 and Corollary 2.7.3]{PennerHarer1992}).

By \cite[Theorem 2.7.4]{PennerHarer1992}, when $\tau$ is recurrent,
the correspondence above gives a natural continuous bijection, which we
call the \emph{realizing map}:
\begin{equation}
\label{eq:realizing_map}
R_\tau\colon E(\tau)\ni \mu\mapsto \lambda_\mu\in \ml(\tau)\subset \ml.
\end{equation}
The collection $\{(E(\tau),R_\tau)\mid \text{$\tau$ is recurrent}\}$ gives integral piecewise-linear coordinates on $\ml$ (cf. \cite[Proposition 9.5.8]{Thurston-LectureNote} and \cite[Theorem 3.1.4]{PennerHarer1992}).

\subsection{Hubbard--Masur differentials and extremal length}
\label{subsec:HMdiffExtremalLength}
Let $x=(M,f)\in\teich_{g,m}$.
We denote by $\mathcal{Q}_x$ the finite-dimensional complex
vector space of integrable holomorphic quadratic
differentials on $M$, equipped with the $L^1$-norm
\[
\|q\|=\int_M |q(z)|\,\frac{|dz\wedge d\overline{z}|}{2},
\]
where $q=q(z)\,dz^2$ is the local expression for
$q\in\mathcal{Q}_x$.
Equivalently, after filling in the punctures of $M$, elements of $\mathcal{Q}_x$ are meromorphic quadratic differentials with at most simple poles at the punctures.
For $q=q(z)\,dz^2\in\mathcal{Q}_x$, we define a measured lamination $v_x(q)$, called the \emph{vertical lamination} (or vertical foliation), by the condition
\[
i\bigl(v_x(q),\alpha\bigr)
=
\inf_{\alpha'\in\alpha}
\int_{f(\alpha')}
\left|\operatorname{Re}\bigl(\sqrt{q(z)}\,dz\bigr)\right|
\]
for every $\alpha\in\mathcal{S}$.

Hubbard and Masur \cite{HubbardMasur1979} showed that the map
\begin{equation}
\label{eq:HM-map}
v_x\colon \mathcal{Q}_x\longrightarrow \ml
\end{equation}
is a homeomorphism. Accordingly, throughout this paper we refer to the map \eqref{eq:HM-map} as the \emph{Hubbard--Masur map}, although the map itself was known before the result of Hubbard and Masur. For each $\lambda\in\ml$, the \emph{Hubbard--Masur differential} $q_{\lambda,x}\in\mathcal{Q}_x$ is defined by $v_x(q_{\lambda,x})=\lambda$.

The \emph{extremal length} of $\lambda\in \ml$ on $x\in \teich_{g,m}$ is defined by
\[
\ext_x(\lambda)=\|q_{\lambda,x}\|.
\]
It satisfies $\ext_x(t\lambda)=t^2\ext_x(\lambda)$ for every $t\ge 0$.
The extremal length function
\[
\teich_{g,m}\times \ml\ni (x,\lambda)\mapsto \ext_x(\lambda)
\]
is continuous and satisfies the \emph{quasiconformal invariance}
\begin{equation}
\label{eq:QC-inv_extremal_length}
e^{-2d_T(x,y)}\le \frac{\ext_x(\lambda)}{\ext_y(\lambda)}\le e^{2d_T(x,y)}
\end{equation}
for all $x,y\in \teich_{g,m}$ and all $\lambda\in\ml\setminus\{0\}$.

\subsection{Teichm\"uller rays}
Let $x=(M,f)\in\teich_{g,m}$ and $[\lambda]\in\pml$. We define the \emph{Teichm\"uller (geodesic) ray} $\ray^\lambda_x\colon[0,\infty)\to\teich_{g,m}$ of direction $[\lambda]$ emanating from $x$ by
\[
\ray^{\lambda}_x(t)=(M_t,f_t\circ f),
\]
where $\ray^{\lambda}_x(0)=x$ and $f_t\colon M\to M_t$ is the quasiconformal mapping with Beltrami coefficient $\tanh(t)\overline{q_{\lambda,x}}/|q_{\lambda,x}|$ for $t\ge 0$, interpreted as $0$ at the zeros of $q_{\lambda,x}$. This definition depends only on the projective class $[\lambda]$.
It is known that $\ray^\lambda_x$ is a geodesic ray with respect to the Teichm\"uller distance:
\[
d_T(\ray^\lambda_x(t),\ray^\lambda_x(s))=|t-s|\quad (t,s\ge 0).
\]
Furthermore, for $x\in \teich_{g,m}$, the map
\[
\pml\times (0,\infty)\ni ([\lambda],t)\mapsto \ray^\lambda_x(t)\in \teich_{g,m}\setminus \{x\}
\]
is a homeomorphism.

\subsection{Thurston compactification}
For $x=(M,f)\in \teich_{g,m}$ and $\alpha\in \mathcal{S}$, we denote by $\hypell_x(\alpha)$ the length of the closed hyperbolic geodesic on $M$ homotopic to $f(\alpha)$.
The hyperbolic length function extends continuously and homogeneously to $\ml$; we denote this extension by $\hypell_x(\lambda)$.
The map 
\[
\teich_{g,m}\ni x\mapsto [\mathcal{S}\ni \alpha\mapsto \hypell_x(\alpha)]\in (\mathbb{R}^{\mathcal{S}}_{\ge 0}-\{0\})/\mathbb{R}_{>0}
\]
is an embedding, and the closure of its image is compact.
We call this compactification of $\teich_{g,m}$ the \emph{Thurston compactification} of $\teich_{g,m}$.
Thurston showed that the compactification is homeomorphic to the closed ball of dimension $2\convgenus$, and the complement of the image of $\teich_{g,m}$ is naturally identified with $\pml$ via \eqref{eq:ml-emb}. 
The boundary $\pml$ of the Thurston compactification is called the \emph{Thurston boundary}.

\subsection{Action of the mapping class group on $\teich_{g,m}$ and $\pml$}

The mapping class group $\mcg(\Sigma_{g,m})$ consists of the
isotopy classes of orientation-preserving self-homeomorphisms of
$\Sigma_{g,m}$ that preserve the set of punctures. It acts on
$\teich_{g,m}$ by
\begin{equation}
\label{eq:action_of_mcg_Teichmuller}
[\omega](M,f)=(M,f\circ\omega^{-1}),
\end{equation}
where $\omega$ is a representative of
$[\omega]\in\mcg(\Sigma_{g,m})$.

Under the identification \eqref{eq:ml-emb}, the mapping class group
acts on $\ml$ by piecewise-linear homeomorphisms. More precisely,
\begin{equation}
\label{eq:action_of_mcg-ml}
i([\omega](\lambda),\alpha)
=
i(\lambda,[\omega]^{-1}(\alpha))
\end{equation}
for $[\omega]\in\mcg(\Sigma_{g,m})$, $\lambda\in\ml$, and
$\alpha\in\mathcal S$. Equivalently, $[\omega](\lambda)$ is the
measured lamination obtained by geodesically straightening
$\omega(|\lambda|)$ and transporting the transverse measure of
$\lambda$ by $\omega$. Since this action commutes with scalar
multiplication by $\mathbb R_{>0}$, it induces an action of
$\mcg(\Sigma_{g,m})$ on $\pml$.

The action of $\mcg(\Sigma_{g,m})$ on $\teich_{g,m}$ extends
continuously to the Thurston compactification. The induced action on
the Thurston boundary agrees with the action on $\pml$ described
above.

\subsubsection*{Teichm\"uller modular group}
For a few low-complexity surfaces, the actions of the mapping class group
$\mcg(\Sigma_{g,m})$ on $\teich_{g,m}$ and $\pml$ fail to be
faithful. Let
\begin{equation}
\label{eq:mcg_to_tmod}
\mcg(\Sigma_{g,m})
\longrightarrow
\operatorname{Aut}(\teich_{g,m})
\end{equation}
be the natural homomorphism defined by \eqref{eq:action_of_mcg_Teichmuller}.
Let $K_{g,m}$ be the kernel of \eqref{eq:mcg_to_tmod}.
Under the standing assumption $\convgenus\ge1$, this subgroup is
also the kernel of the action on $\pml$. The quotient
\[
\tmod(g,m)
:=
\mcg(\Sigma_{g,m})/K_{g,m}
\]
is called the \emph{Teichm\"uller modular group} of type $(g,m)$, or simply the \emph{modular group} (cf.\ \cite[\S 2]{Bers1981} and \cite[\S 1]{BersBoundary1981}).
When $m=0$, we write $\tmod(g)$ for $\tmod(g,0)$.

The Teichm\"uller modular group $\tmod(g,m)$ acts faithfully on both $\teich_{g,m}$ and $\pml$, and its action on $\teich_{g,m}$ is properly discontinuous.
The term ``Teichm\"uller modular group'' is also used for the
mapping class group itself (cf. \cite{ImayoshiTaniguchi1992}, \cite{Ivanov1992} and \cite{Mumford1967}).

If $2g-2+m\ge3$, then $K_{g,m}$ is trivial, and hence
\[
\tmod(g,m)\cong\mcg(\Sigma_{g,m}).
\]
The exceptional cases are
\[
(g,m)=(0,4),\ (1,1),\ (1,2),\ \text{and }(2,0).
\]
More precisely,
\begin{align*}
K_{2,0}=\langle\iota\rangle,\ 
K_{1,2}=\langle\iota\rangle, \
K_{1,1}=\langle-I\rangle, \
K_{0,4}\cong
(\mathbb Z/2\mathbb Z)^2,
\end{align*}
% \begin{align*}
% K_{2,0}&=\langle\iota\rangle,
% &
% K_{1,2}&=\langle\iota\rangle,\\
% K_{1,1}&=\langle-I\rangle,
% &
% K_{0,4}&\cong
% (\mathbb Z/2\mathbb Z)^2,
% \end{align*}
where $\iota$ denotes the hyperelliptic involution. Consequently,
\begin{align*}
\tmod(2,0)
&\cong
\mcg(\Sigma_{2,0})/\langle\iota\rangle
\cong
\mcg(\Sigma_{0,6})
=
\tmod(0,6),\\
\tmod(1,2)
&\cong
\mcg(\Sigma_{1,2})/\langle\iota\rangle
\cong
\operatorname{Stab}_{\mcg(\Sigma_{0,5})}(p),\\
\tmod(1,1)
&\cong
\mcg(\Sigma_{1,1})/\langle-I\rangle
\cong
\operatorname{PSL}(2,\mathbb Z),\\
\tmod(0,4)
&\cong
\mcg(\Sigma_{0,4})/
(\mathbb Z/2\mathbb Z)^2
\cong
\operatorname{PSL}(2,\mathbb Z),
\end{align*}
where $p$ is the distinguished puncture of $\Sigma_{0,5}$ arising
from the quotient construction. The stabilizer of $p$ has index
five in $\mcg(\Sigma_{0,5})$. For the description of the kernels,
see \cite{BehrstockMargalit2006} and \cite{EarleKra1974}.

\subsection{Thurston measure}
A measured lamination $\lambda\in \ml$ is called \emph{filling} if $i(\lambda,\alpha)>0$ for all $\alpha\in \mathcal{S}$. A nonzero measured lamination $\lambda$ is called \emph{uniquely ergodic} if $i(\lambda,\mu)=0$ for $\mu\in \ml$ implies $\mu=t\lambda$ for some $t\ge 0$.
Let $\ml^{ue}$ be the set of uniquely ergodic measured laminations. We define $\pml^{ue}$ analogously.

It is known that, up to multiplication by a positive constant, there is a unique nonzero locally finite ergodic $\mcg(\Sigma_{g,m})$-invariant Borel measure $\ThursM$ on $\ml$ that gives full measure to the set of filling measured laminations (cf.\ \cite{Masur1985} and \cite{LindenstraussMirzakhani2008}). We call $\ThursM$ the \emph{Thurston measure} on $\ml$.
The set $\ml^{ue}$ has full Thurston measure (cf.\ \cite{Masur1982}).
In train-track coordinates, the Thurston measure belongs to the Lebesgue measure class determined by the canonical Euclidean structure on the space of transverse measures (cf.\ \cite{MoninTelpukhovskiy2019}). The Thurston measure $\ThursM$ satisfies the homogeneity property
\begin{equation}
\label{eq:homogeneous_Thurston_measure}
\ThursM(tE)=t^{2\convgenus}\ThursM(E)
\end{equation}
for any measurable set $E\subset \ml$ and $t\ge 0$, where $tE=\{t\lambda\mid \lambda\in E\}$.

For $x\in\teich_{g,m}$, define
\[
\mathcal{BML}_x=\{\lambda\in\ml\mid\ext_x(\lambda)\le1\},
\]
and set $\mathcal{SML}_x=\partial\mathcal{BML}_x$.
The quotient map $\ml-\{0\}\ni \lambda\mapsto [\lambda]\in \pml$ induces a homeomorphism $\mathcal{SML}_x\to \pml$.
Under this identification, 
we define the \emph{Thurston measure} on $\pml$ with respect to $x\in \teich_{g,m}$ by
\begin{equation}
\label{eq:def_Th_measure}
\PThursM^x(A)=
\dfrac{\ThursM(\{\lambda\in\ml\mid [\lambda]\in A, \ext_x(\lambda)\le 1\})}{\ThursM(\{\lambda\in\ml\mid \ext_x(\lambda)\le 1\})}
\end{equation}
for a Borel set $A\subset \pml$. By definition, each $\PThursM^x$ is a probability measure on $\pml$. For $x,y\in \teich_{g,m}$, $\PThursM^x$ and $\PThursM^y$ are mutually absolutely continuous. In fact, the Radon--Nikodym derivative satisfies
\begin{equation}
\label{eq:Radon-Nikodym}
\dfrac{d\PThursM^y}{d\PThursM^x}([\lambda])=
\left(\frac{\ext_x(\lambda)}{\ext_y(\lambda)}\right)^{\convgenus}
\end{equation}
for $[\lambda]\in \pml$ (cf. \cite[\S2.3.1]{Mirzakhani_etal2012}). Here we use the fact that the normalizing factor $\ThursM(\{\lambda\in\ml\mid\ext_x(\lambda)\le1\})$ is independent of $x$; this is the constancy of the Hubbard--Masur function. For analytically finite surfaces, including punctured surfaces, this follows from \cite[Corollary 15.2]{Miyachi2023}. For closed surfaces, see also \cite[Theorem 5.10]{Dumas2015}.

The following fact is well known; we include a proof for completeness.

\begin{lemma}
\label{lem:quasi-invariance}
Each $\PThursM^x$ is quasi-invariant under the action of the mapping class group.
\end{lemma}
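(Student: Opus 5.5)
We prove that for every $[\omega]\in\mcg(\Sigma_{g,m})$ the pushforward $[\omega]_*\PThursM^x$ equals $\PThursM^{[\omega](x)}$. Then quasi-invariance follows from the mutual absolute continuity of the measures $\PThursM^x$ and $\PThursM^y$, with the Radon--Nikodym derivative given by \eqref{eq:Radon-Nikodym}.

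The first ingredient is the naturality of extremal length:
\[
\ext_{[\omega](x)}([\omega](\lambda))=\ext_x(\lambda)
\qquad\text{for all }\lambda\in\ml .
\]
To see this, write $x=(M,f)$, so that $[\omega](x)=(M,f\circ\omega^{-1})$ by \eqref{eq:action_of_mcg_Teichmuller}. By \eqref{eq:action_of_mcg-ml}, the lamination $[\omega](\lambda)$ is obtained by transporting $\lambda$ through $\omega$. The Hubbard--Masur differential is characterized by the intersection numbers $i(v_x(q),\alpha)$ computed through the marking $f$. Let $q=q_{\lambda,x}$ be this differential on $M$. Computing intersections through the marking $f\circ\omega^{-1}$, we have, for every $\alpha\in\mathcal S$,
\[
i\bigl(v_{[\omega](x)}(q),\alpha\bigr)=i\bigl(\lambda,[\omega]^{-1}(\alpha)\bigr)=i\bigl([\omega](\lambda),\alpha\bigr).
\]
By the Hubbard--Masur theorem \eqref{eq:HM-map}, this gives $q_{[\omega](\lambda),[\omega](x)}=q_{\lambda,x}$ as the same differential on $M$, and in particular their norms agree. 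The only care needed in this step is the bookkeeping of $\omega$ versus $\omega^{-1}$ in the conventions.

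The second ingredient is the $\mcg$-invariance of $\ThursM$. Fix a Borel set $A\subset\pml$. Then
\[
[\omega]^{-1}\{\lambda\mid[\lambda]\in A,\ \ext_{[\omega](x)}(\lambda)\le1\}
=\{\mu\mid[\mu]\in[\omega]^{-1}A,\ \ext_x(\mu)\le1\},
\]
where the equality of the extremal-length conditions comes from the naturality identity. Applying $\ThursM$ and using its invariance gives equality of the numerators in \eqref{eq:def_Th_measure}. The denominators are equal by the constancy of the Hubbard--Masur function recalled above; alternatively, take $A=\pml$ in the same computation. Hence
\[
\PThursM^{[\omega](x)}(A)=\PThursM^x([\omega]^{-1}A)=[\omega]_*\PThursM^x(A).
\]

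Finally, since $\PThursM^{[\omega](x)}$ and $\PThursM^x$ are mutually absolutely continuous, $[\omega]$ preserves $\PThursM^x$-null sets, which is the claimed quasi-invariance. As a byproduct, \eqref{eq:Radon-Nikodym} yields
\[
\frac{d([\omega]_*\PThursM^x)}{d\PThursM^x}([\lambda])=\left(\frac{\ext_x(\lambda)}{\ext_{[\omega](x)}(\lambda)}\right)^{\convgenus}.
\]
The elements of $K_{g,m}$ act trivially on both $\teich_{g,m}$ and $\pml$, so the same statement holds for $\tmod(g,m)$.
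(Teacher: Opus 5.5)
Your proposal is correct and is the paper's proof: equivariance of extremal length and $\mcg$-invariance of $\ThursM$ give $[\omega]_*\PThursM^x=\PThursM^{[\omega](x)}$, and quasi-invariance then follows from the mutual absolute continuity of the measures $\PThursM^x$. You also verify $\ext_{[\omega](x)}([\omega](\lambda))=\ext_x(\lambda)$ through the Hubbard--Masur characterization, and you match the denominators by taking $A=\pml$; these simply fill in details the paper leaves implicit.
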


\begin{proof}
Let $[\omega]\in\mcg(\Sigma_{g,m})$ and
$x\in\teich_{g,m}$.
The equivariance of extremal length gives
\[
\ext_{[\omega](x)}([\omega](\lambda))
=\ext_x(\lambda)
\]
for every $\lambda\in\ml$.
Together with the invariance of Thurston measure,
this implies, by \eqref{eq:def_Th_measure}, that
\[
\PThursM^{[\omega](x)}(A)
=\PThursM^x([\omega]^{-1}(A))
=([\omega]_*\PThursM^x)(A)
\]
for every Borel set $A\subset\pml$.
Since $\PThursM^{[\omega](x)}$ and $\PThursM^x$
are mutually absolutely continuous, the claim follows.
\end{proof}

% \begin{proof}
% Let $[\omega]\in \mcg(\Sigma_{g,m})$ and $x\in \teich_{g,m}$.
% From the definition \eqref{eq:def_Th_measure},
% \begin{align*}
% \PThursM^{[\omega](x)}(A)
% &=\dfrac{\ThursM(\{\lambda\in\ml\mid [\lambda]\in A, \ext_{[\omega](x)}(\lambda)\le 1\})}{\ThursM(\{\lambda\in\ml\mid \ext_{[\omega](x)}(\lambda)\le 1\})}
% \\
% &=\dfrac{\ThursM(\{\lambda\in\ml\mid [\lambda]\in A, \ext_{x}([\omega]^{-1}(\lambda))\le 1\})}{\ThursM(\{\lambda\in\ml\mid \ext_{x}([\omega]^{-1}(\lambda))\le 1\})}
% \\
% &=\dfrac{\ThursM(\{\lambda \in\ml\mid [\lambda]\in [\omega]^{-1}(A), \ext_{x}(\lambda)\le 1\})}{\ThursM([\omega](\{\lambda\in\ml\mid \ext_{x}(\lambda)\le 1\}))}
% \\
% &=\dfrac{\ThursM(\{\lambda \in\ml\mid [\lambda]\in [\omega]^{-1}(A), \ext_{x}(\lambda)\le 1\})}{\ThursM(\{\lambda\in\ml\mid \ext_{x}(\lambda)\le 1\})}
% \\
% &=\PThursM^{x}([\omega]^{-1}(A))=[\omega]_*\PThursM^x(A)
% \end{align*}
% for every Borel set $A\subset \pml$,
% since the Thurston measure $\ThursM$ is a $\mcg(\Sigma_{g,m})$-invariant measure on $\ml$. Hence, from \eqref{eq:Radon-Nikodym}, the pushforward measure $[\omega]_*\PThursM^x$ is absolutely continuous with respect to $\PThursM^x$.
% Therefore, the Thurston measure $\PThursM^x$ is quasi-invariant with respect to the action of the mapping class group.
% \end{proof}

The Radon--Nikodym derivative of the pushforward measure $[\omega]_*\PThursM^x$ with respect to $\PThursM^x$ is called the \emph{Jacobian} of $[\omega]$ with respect to $\PThursM^x$ (cf.\ \cite[Appendix D11]{HubbardVol3_2022}).
The preceding proof shows that the Jacobian of $[\omega]$ with respect to $\PThursM^x$ is
\begin{equation}
\label{eq:Radon-Nikodym2}
\dfrac{d([\omega]_*\PThursM^x)}{d\PThursM^x}([\lambda])=
\dfrac{d\PThursM^{[\omega](x)}}{d\PThursM^x}([\lambda])=
\left(\frac{\ext_x(\lambda)}{\ext_{[\omega](x)}(\lambda)}\right)^{\convgenus}
\end{equation}
for $[\lambda]\in \pml$ and $[\omega]\in \mcg(\Sigma_{g,m})$.

The ergodicity of the mapping class group action on $\pml$ with respect to the Lebesgue measure class is due to Masur \cite{Masur1982}; see also Rees \cite{Rees1981} for an alternative proof that applies to surfaces with boundary.
The Thurston measure $\PThursM^x$ is nonatomic. Thus, for each $x\in\teich_{g,m}$, the completion of $(\pml,\PThursM^x)$ is a Lebesgue space in the sense of Rohlin, isomorphic modulo null sets to $[0,1]$ with Lebesgue measure (cf.\ \cite{Rohlin1952}; see also \cite[Theorem 17.41]{Kechris1995}).

\subsection{Busemann functions and horoballs}
The \emph{Busemann function} $\busemann_{\lambda}^{x_0}$ with direction $[\lambda]\in \pml$ and basepoint $x_0\in \teich_{g,m}$ is defined by the limit
\[
\busemann^{x_0}_{\lambda}(x)=\lim_{t\to \infty}(d_T(x,\ray^{\lambda}_{x_0}(t))-t)
\]
(cf. \cite[\S22]{Busemann1955} and \cite[\S3]{EberleinONeill1973}).
By definition, $\busemann^{x_0}_{\lambda}(x_0)=0$, and
\begin{equation}
\label{eq:busemann-basepoint}
\busemann^{[\omega](x_0)}_{\lambda}([\omega](x))
=\busemann^{x_0}_{[\omega]^{-1}(\lambda)}(x)
\end{equation}
Ivanov \cite{Ivanov2001} showed that, for any $x_1,x_2\in \teich_{g,m}$ and $[\lambda]\in\pml$, there exists $D_0>0$ such that $d_T(\ray^\lambda_{x_1}(t),\ray^\lambda_{x_2}(t))\le D_0$ for all $t\ge 0$. Therefore, for another basepoint $x_1\in \teich_{g,m}$,
\begin{equation}
\label{eq:Busemann_function}
|\busemann^{x_0}_{\lambda}(x)-\busemann^{x_1}_{\lambda}(y)|\le D_0+d_T(x,y)
\end{equation}
for all $x,y\in \teich_{g,m}$, where $D_0$ depends on $x_0,x_1$, and $[\lambda]$.
When $[\lambda]$ is uniquely ergodic, the Busemann function is given by
\begin{equation}
\label{eq:busemann_ue}
\busemann^{x_0}_\lambda(x)=\frac{1}{2}(\log \ext_{x}(\lambda)-\log \ext_{x_0}(\lambda))
\end{equation}
(cf. \cite{LiuSu2014} and \cite{Walsh2019}; see also \cite{Miyachi2017}). When $[\lambda]\in \pml^{ue}$, the Busemann function satisfies the following cocycle condition:
\begin{equation}
\label{eq:busemann_cocycle_condition}
\busemann^{x_0}_\lambda(y_0)+\busemann^{y_0}_\lambda(z_0)+\busemann^{z_0}_\lambda(x_0)=0.
\end{equation}
For $r\in \mathbb{R}$, we define the open $r$-\emph{horoball} centered at $[\lambda]\in \pml$ with basepoint $x_0\in \teich_{g,m}$ by
\[
\horos([\lambda],r;x_0)=
\left\{x\in \teich_{g,m}\mid \busemann^{x_0}_\lambda(x)<r\right\}.
\]
Its closure is $\overline{\horos([\lambda],r;x_0)}=\{x\in \teich_{g,m}\mid \busemann^{x_0}_\lambda(x)\le r\}$.
From \eqref{eq:busemann-basepoint},
\begin{equation}
\label{eq:horoball-basepoint}
[\omega](\horos([\lambda],r;x_0))
=\horos([\omega]([\lambda]),r;[\omega](x_0)).
\end{equation}
From the cocycle condition \eqref{eq:busemann_cocycle_condition}, when $[\lambda]\in \pml^{ue}$,
\begin{equation}
\label{eq:horoball-basepoint2}
\horos([\lambda],r;y_0)
=\horos([\lambda],r+\busemann^{x_0}_\lambda(y_0);x_0).
\end{equation}
The boundary $\partial \horos([\lambda],r;x_0)$ is the level set $\{\busemann^{x_0}_\lambda=r\}$ of the Busemann function. We call it the $r$-\emph{horosphere} with basepoint $x_0\in \teich_{g,m}$. When $r=0$, we write $\horos([\lambda];x_0)$ for the $0$-horoball with basepoint $x_0$. The point $x_0$ lies on the boundary of this horoball.

\begin{remark}
In \cite{SuTan2023}, W. Su and D. Tan discuss properties of horoballs and horospheres. In particular, they observe that formula \eqref{eq:busemann_ue} also holds for indecomposable measured laminations (cf. \cite[Proposition 5.7]{SuTan2023}).
\end{remark}

%%%%%%%%%%%%%%%%%%%%%%%%%%%%%%%%%%%%%%%%%%%%%
%%%%%%%%%%%%%%%%%%%%%%%%%%%%%%%%%%%%%%%%%%%%%
%%%%%%%%%%%%%%%%%%%%%%%%%%%%%%%%%%%%%%%%%%%%%
%%%%%%%%%%%%%%%%%%%%%%%%%%%%%%%%%%%%%%%%%%%%%

\section{Dynamics of subgroups}

\subsection{McCarthy--Papadopoulos classification}
McCarthy and Papadopoulos developed a classification of subgroups of mapping class groups modeled on the theory of Kleinian groups in \cite{McCarthyPapadopoulos1989}. We recall their notion of a \emph{sufficiently large subgroup}, which is the mapping class group analogue of a non-elementary Kleinian group.

Although their classification is formulated for subgroups of $\mcg(\Sigma_{g,m})$, we work with subgroups of the Teichm\"uller modular group $\tmod(g,m)$. This amounts to passing to the effective quotient of the actions on $\teich_{g,m}$ and $\pml$, so that both actions are faithful.

\subsubsection{Limit sets}

The definitions of limit points and limit sets are due to McCarthy and Papadopoulos \cite{McCarthyPapadopoulos1989} and Masur \cite{Masur1986}; see also Kent and Leininger \cite[\S3]{KentLeininger2008}.

Let $G$ be a subgroup of $\tmod(g,m)$. A \emph{weak limit point} of the action of $G$ on $\pml$ is a point $[\lambda]\in\pml$ for which there exist $[\lambda']\in\pml$ and an infinite sequence of distinct elements $\{[\omega_n]\}_n\subset G$ such that $[\lambda]=\lim_{n\to\infty}[\omega_n]([\lambda'])$. The \emph{canonical limit set} is the closure of the set of all weak limit points. A \emph{limit set} for $G$ is any closed $G$-invariant subset of the canonical limit set.

We say that $G$ is \emph{dynamically irreducible} if it has a unique nonempty minimal limit set. We call this set the \emph{limit set} of $G$ and denote it by $\Lambda(G)$. We refer to the points of $\Lambda(G)$ as the \emph{limit points} of $G$.

\subsubsection{Sufficiently large subgroups}

A set of pseudo-Anosov mapping classes is said to be \emph{independent} if no two elements of the set have the same fixed-point set in $\pml$. A subgroup $G$ of $\tmod(g,m)$ is called \emph{sufficiently large}\index{sufficiently large} if it contains an independent pair of pseudo-Anosov elements.

Every sufficiently large subgroup is dynamically irreducible. Indeed, let $\Lambda_0(G)\subset\pml$ be the set of fixed points of pseudo-Anosov elements of $G$. If $G$ is sufficiently large, then the closure of $\Lambda_0(G)$ is the unique nonempty minimal closed $G$-invariant subset of $\pml$. Hence, it coincides with the limit set $\Lambda(G)$ (cf.\ \cite[Corollary 4.2]{McCarthyPapadopoulos1989}).

Moreover, suppose that $G$ is a sufficiently large subgroup of $\tmod(g,m)$. If $H<G$ is either an infinite normal subgroup or a subgroup of finite index, then $H$ is also sufficiently large and satisfies $\Lambda(H)=\Lambda(G)$ (cf.\ \cite[Propositions 5.1, 5.4, and 5.5]{McCarthyPapadopoulos1989}).

Let
\[
Z\Lambda(G)=\{[\lambda]\in\pml\mid
i(\lambda,\mu)=0\text{ for some }[\mu]\in\Lambda(G)\}.
\]
McCarthy and Papadopoulos show that if a sequence of distinct points $\{[\omega_n](p)\}_n$ in a $G$-orbit, where $p\in\teich_{g,m}\cup\pml$ and $[\omega_n]\in G$, converges to $[\lambda]\in\pml$, then $[\lambda]\in Z\Lambda(G)$. If, in addition, $[\lambda]\in\pml^{ue}$, then $[\lambda]\in\Lambda(G)$ (cf.\ \cite[Proposition 8.1]{McCarthyPapadopoulos1989}).

\subsubsection*{Examples}
The following examples are discussed in \cite[Example 1 in \S5]{McCarthyPapadopoulos1989}.
\begin{itemize}
\item[(a)]
The Teichm\"uller modular group $\tmod(g,m)$ is sufficiently large. Moreover, its action on $\pml$ is minimal, and hence
\[
\Lambda(\tmod(g,m))=\pml
\]
(cf.\ \cite[Expos\'e 6, VII]{Fathi1979}).
\item[(b)]
For $g\ge 2$, the natural quotient homomorphism \eqref{eq:mcg_to_tmod} restricts to an embedding of the Torelli group $\torelliG_g$.
Indeed, its kernel is trivial when $g\ge 3$. When $g=2$, its kernel is generated by the hyperelliptic involution $\iota$, which acts as $-I$ on $H_1(\Sigma_g,\mathbb Z)$ and therefore does not belong to $\torelliG_g$. We may thus identify $\torelliG_g$ with its image in $\tmod(g)$. Since this image is an infinite normal subgroup of $\tmod(g)$, it follows that $\torelliG_g$ is sufficiently large and
that
$\Lambda(\torelliG_g)=\pml$.
\end{itemize}

\subsection{Conical and horospherical limit points}

Following Kent and Leininger \cite{KentLeininger2008}, we say that a point $[\lambda]\in\pml$ is a \emph{conical limit point} of $G$ if, for every Teichm\"uller geodesic ray $\ray^\lambda_*$ with direction $[\lambda]$, there exist $R>0$ and a $G$-orbit that meets the $R$-neighborhood of $\ray^\lambda_*$ in infinitely many points.
In their definition of conical limit points, Kent and Leininger include the assumption that $[\lambda]\in\Lambda(G)$. It is straightforward to verify that our definition is equivalent to theirs; see \cite{miyachi2026limitsetsmappingclass}. Note that every conical limit point is uniquely ergodic by Masur's criterion \cite{Masur1992}. We denote the set of conical limit points of $G$ by $\Lambda_C(G)$.

% A point $[\lambda]\in\pml$ is called a \emph{big horospherical limit point} of $G$ if there exists a sequence $([\omega_n])_{n=1}^\infty\subset G$ such that
% \begin{equation}
% \label{eq:horo_definition1}
% \frac{\ext_{[\omega_n](x_0)}(\lambda)}{\ext_{x_0}(\lambda)}
% \end{equation}
% remains bounded above and $d_T(x_0,[\omega_n](x_0))\to\infty$ as $n\to\infty$.

We next discuss horospherical limit points. In \cite{miyachi2026limitsetsmappingclass}, Ohshika and the author introduced the following notion. A point $[\lambda]\in\pml$ is called a \emph{small horospherical limit point} of $G$ if there exist a point $x_0\in\teich_{g,m}$ and a sequence $([\omega_n])_{n=1}^\infty\subset G$ such that
\begin{equation}
\label{eq:small-horo_definition}
\busemann^{x_0}_\lambda([\omega_n](x_0))\longrightarrow-\infty
\end{equation}
as $n\to\infty$. 
A point $[\lambda]\in\pml$ is called a \emph{big horospherical limit point} of $G$ if there exist a point $x_0\in\teich_{g,m}$, $b\in \mathbb{R}$ and an infinite sequence $([\omega_n])_{n=1}^\infty\subset G$ of pairwise distinct elements of $G$ such that
\begin{equation}
\label{eq:big_horo_definition}
\busemann^{x_0}_\lambda([\omega_n](x_0))<b.
\end{equation}
for every $n\ge 1$. 

This notion is analogous to its counterpart in the theory of visibility manifolds (cf.\ \cite[\S3]{EberleinONeill1973} and \cite[Section IV]{Sullivan1981}). Geometrically, $[\lambda]\in\pml$ is a small horospherical limit point if and only if every horoball centered at $[\lambda]$ contains infinitely many points of the orbit $G(x_0)$. It follows from \eqref{eq:Busemann_function} that the definition is independent of the choice of basepoint.

We denote the sets of small and big horospherical limit points of $G$ by $\Lambda_h(G)$ and $\Lambda_H(G)$, respectively. We have
\[
\Lambda_C(G)\subset\Lambda_h(G)\subset \Lambda_H(G)
\]
(cf.\ \cite{miyachi2025functiontheorydynamicsergodic}). Suppose that $[\lambda]\in\pml^{ue}$ is a (small) horospherical limit point of $G$. Choose a sequence $([\omega_n])_{n=1}^\infty\subset G$ such that
\begin{equation}
\frac{\ext_{[\omega_n](x_0)}(\lambda)}
{\ext_{x_0}(\lambda)}
\longrightarrow 0.
\end{equation}
Then the sequence $\{[\omega_n](x_0)\}_{n=1}^\infty$ converges to $[\lambda]$ in the Thurston compactification. Indeed, since
\begin{equation*}
e^{-2d_T(x_0,[\omega_n](x_0))}
\ext_{[\omega_n](x_0)}(\lambda)
\longrightarrow 0
\end{equation*}
as $n\to\infty$, and since $\lambda$ is uniquely ergodic, it follows that $[\omega_n](x_0)$ converges to $[\lambda]$ in the Gardiner--Masur compactification (cf.\ \cite[Theorem 1.1]{Miyachi2008} and \cite[Theorem 3]{Miyachi2013}). Consequently, it also converges to $[\lambda]$ in the Thurston compactification (cf.\ \cite[Corollary 1]{Miyachi2013}).

\subsection{Dirichlet points}
We set
\begin{equation}
\label{eq:horo_definition3}
\horo^{x_0}_G([\lambda])
=
\inf_{[\omega]\in G}
\exp\left(2\busemann^{x_0}_\lambda([\omega](x_0))\right).
\end{equation}
By \eqref{eq:busemann_ue}, for $[\lambda]\in\pml^{ue}$, we have
\begin{equation}
\label{eq:horo_definition4}
\horo^{x_0}_G([\lambda])
=
\inf_{[\omega]\in G}
\frac{\ext_{[\omega](x_0)}(\lambda)}
{\ext_{x_0}(\lambda)}.
\end{equation}
A point $[\lambda]\in\pml^{ue}$ is a small horospherical limit point of $G$ if and only if
\begin{equation}
\label{eq:horo_definition2}
\horo^{x_0}_G([\lambda])=0.
\end{equation}
Following the terminology for Kleinian groups, a point $[\lambda]\in\pml$ is called a \emph{Dirichlet point} of $G$ with respect to $x_0$ if $\horo^{x_0}_G([\lambda])>0$ and the infimum defining $\horo^{x_0}_G([\lambda])$ is attained at a point of the orbit $G(x_0)$. 
We denote by $\operatorname{Dir}_G(x_0)$ the set of Dirichlet points of $G$ with respect to $x_0$ and call it the \emph{Dirichlet set} of $G$.
\section{Level loci of extremal lengths}

\subsection{Level loci}

For $x_1,x_2\in\teich_{g,m}$ and $c\in \mathbb{R}$, we define the \emph{level loci of extremal lengths} by
\begin{align*}
\ml_c(x_1,x_2)
&=\{\lambda\in\ml\mid \ext_{x_1}(\lambda)=e^c\ext_{x_2}(\lambda)\},\\
\pml_c(x_1,x_2)
&=\{[\lambda]\in\pml\mid \ext_{x_1}(\lambda)=e^c\ext_{x_2}(\lambda)\}.
\end{align*}
The second definition is independent of the choice of representative $\lambda$ because extremal length is homogeneous of degree two.
When $c=0$, we simply write $\ml(x_1,x_2)=\ml_0(x_1,x_2)$ and $\pml(x_1,x_2)=\pml_0(x_1,x_2)$.
 
We prove the following theorem.

\begin{theorem}
\label{thm:level_set}
If $x_1\ne x_2$, then $\ml_c(x_1,x_2)$ is null with respect to $\ThursM$, and $\pml_c(x_1,x_2)$ is null with respect to $\PThursM^x$ for every $x\in\teich_{g,m}$.
\end{theorem}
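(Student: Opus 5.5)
The plan is to reduce to a local statement in train-track coordinates and then use real analyticity. The projective statement follows from the linear one. By \eqref{eq:def_Th_measure}, $\PThursM^x(A)$ is, up to normalization, the $\ThursM$-measure of the cone $\{\lambda\mid[\lambda]\in A,\ \ext_x(\lambda)\le 1\}$. Since $\pml_c(x_1,x_2)$ pulls back to the cone $\ml_c(x_1,x_2)$, which is invariant under $\mathbb R_{>0}$ by homogeneity of degree two, it suffices to show $\ThursM(\ml_c(x_1,x_2))=0$. Independence from $x$ is then automatic, because the measures $\PThursM^x$ are mutually absolutely continuous.

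To show $\ml_c$ is null, I would cover $\ml$ by countably many charts $(E(\tau),R_\tau)$ with $\tau$ maximal (complete) recurrent train tracks. Their images cover $\ml$ up to a $\ThursM$-null set, namely the non-essentially-complete or non-filling laminations. In each chart $\ThursM$ is in the Lebesgue class of $E(\tau)\subset W(\tau)\cong\mathbb R^{2\convgenus}$, and the interior of $E(\tau)$ is a nonempty open cone. So it is enough to show, for each such $\tau$, that
\[
F(\mu)=\ext_{x_1}(R_\tau(\mu))-e^c\ext_{x_2}(R_\tau(\mu))
\]
vanishes only on a Lebesgue-null subset of the interior of $E(\tau)$.

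The key input is that $\mu\mapsto \ext_x(R_\tau(\mu))$ is real analytic on the interior of $E(\tau)$. I would obtain this from the Hubbard--Masur theorem: on a maximal track, positive weights correspond to quadratic differentials in the principal stratum, with simple zeros. There the period coordinates are holomorphic local coordinates on the stratum, and the weights of the vertical foliation are real parts of periods. Transversality of the Hubbard--Masur map in these coordinates gives that $\mu\mapsto q_{R_\tau(\mu),x}$ is real analytic, and hence so is the $L^1$-norm, since the area is a quadratic form in periods by Riemann bilinear relations. Granting this, $F$ is real analytic on a connected open set. Its zero set is therefore null unless $F\equiv 0$ on the whole chart.

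It remains to exclude $F\equiv0$, i.e.\ $\ext_{x_1}=e^c\ext_{x_2}$ on a nonempty open subset $U$ of $\ml$. This is where I expect the main obstacle. I would first propagate the identity to all of $\ml$. One option is analytic continuation across charts, which requires compatible analyticity on overlaps. A better option is to use that the open set $U$ contains some filling uniquely ergodic $\lambda$. If $\ext_{x_1}$ and $e^c\ext_{x_2}$ agree to all orders at $\lambda$, differentiate: the first variation of $\ext_x$ in $\lambda$ recovers $q_{\lambda,x}$ (Gardiner's formula, in dual form). Equality of these derivatives on an open set would give $q_{\lambda,x_1}$ and $q_{\lambda,x_2}$ inducing the same horizontal and vertical data up to scaling, for all $\lambda\in U$. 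That would force $x_1$ and $x_2$ to lie on a common Teichm\"uller geodesic determined by each $\lambda\in U$. Since $U$ is open and different $\lambda$ give different geodesics, this forces $x_1=x_2$.

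If the full-$\ml$ route is easier, the alternative is to argue as follows. The identity on $U$ extends by density of rational directions and the PL-invariance under a pseudo-Anosov of $U$'s saturation. Then $\ext_{x_1}/\ext_{x_2}\equiv e^c$ everywhere. Kerckhoff's formula $d_T(x_1,x_2)=\frac12\log\sup_\lambda \ext_{x_1}(\lambda)/\ext_{x_2}(\lambda)$, applied in both directions, gives $c\ge 0$ and $-c\ge0$, so $c=0$ and $d_T=0$, a contradiction. The hard part in either route is the propagation from an open set to a global statement (or to a rigidity of the Hubbard--Masur differentials), and I would attempt the Gardiner-derivative argument first.
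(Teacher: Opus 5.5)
Your architecture is the paper's: the cone reduction from $\pml_c(x_1,x_2)$ to $\ml_c(x_1,x_2)$, real analyticity of $\ext_x\circ R_\tau$ via Dumas's principal-stratum result, and a nonvanishing argument. Your ``Gardiner-derivative'' route is essentially the paper's Lemma~\ref{lem:coincidence}: equal $\lambda$-derivatives give $F_1=e^cF_2$, hence a Teichm\"uller geodesic through $x_1\ne x_2$ with direction $[\lambda]$ for every $\lambda$ in an open set, which is impossible.

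\textbf{The gap is the analyticity input.} You claim that positive weights on a complete track correspond to principal-stratum differentials. This is false. The interior of $E(\tau)$ contains weighted multicurves and other laminations of positive rational depth, and these are not essentially complete. At such points $q_{R_\tau(\mu),x}$ can have higher-order zeros. Even when it is principal, Dumas's analytic chart comes from a track adapted to $q$, and its transition to your fixed $\tau$ is piecewise linear and need not be linear at points of positive rational depth. So you have not shown that $F$ is real analytic on the connected set $\operatorname{int}E(\tau)$, and the conclusion ``null unless $F\equiv0$ on the whole chart'' is unsupported.

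\textbf{How the paper repairs this.} It works only near laminations $\lambda_0$ of rational depth zero, which form a full-measure set. These are essentially complete, so both $q_{\lambda_0,x_i}$ are principal. A no-collision splitting argument then gives one complete track on which both maps $\mu\mapsto q_{R_\tau(\mu),x_i}$ are real analytic near $\mu_0$ (Lemma~\ref{lem:common_splitting}). The identity theorem is applied on that small connected neighborhood, and second countability finishes. Your Route~1 is local, so it plugs into this localized version. Two adjustments are needed there:
\begin{itemize}
\item Pick $\lambda$ of rational depth zero, not merely filling and uniquely ergodic. Recovering the horizontal foliation from $d\ext_x$ needs $W(\tau)$ to be full-dimensional and the Thurston form to be nondegenerate.
\item The relevant formula is the $\lambda$-derivative of extremal length (Miyachi's formula). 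Gardiner's formula is the derivative in $x$.
\end{itemize}

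\textbf{Route~2 does not work.} Extremal length is not mapping-class invariant, since $\ext_x(\phi(\lambda))=\ext_{\phi^{-1}(x)}(\lambda)$. So an identity on $U$ cannot be transported by pseudo-Anosov dynamics. Analytic continuation across charts also fails, for the piecewise-linear reason above.

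\textbf{The low-complexity cases are not covered.} For $(g,m)=(1,1)$ and $(0,4)$ there are no essentially complete measured laminations; on $\Sigma_{1,1}$ the Hubbard--Masur differentials are constant multiples of $dz^2$. So your principal-stratum justification does not apply. The paper treats these cases with the explicit formula $\ext_x([a,b])=|a+b\tau(x)|^2/\operatorname{Im}\tau(x)$. There the level locus is the zero set of a nonzero binary quadratic form. The case $(0,4)$ is reduced to $(1,1)$ through the branched double cover.
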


\begin{remark}
For the proof in this section, we assume that $\convgenus\ge2$. Under the standing assumption $\convgenus>0$, the excluded cases are precisely $(g,m)=(0,4)$ and $(1,1)$; these cases are treated separately in the appendix.
\end{remark}

\subsection{Lemmas}
\label{subsec:lemmas}

The \emph{rational depth} of $\lambda\in\ml$ is the dimension of the space of rational linear functions that vanish at $\lambda$ in a train-track coordinate chart containing $\lambda$ (cf.\ \cite[Definition 9.5.10]{Thurston-LectureNote}). If a measured lamination has rational depth zero, then it is essentially complete (cf.\ \cite[Proposition 9.5.11]{Thurston-LectureNote}). Since the Thurston measure belongs to the Lebesgue measure class in each train-track chart, and since there is a countable train-track atlas, the set of measured laminations of rational depth zero has full $\ThursM$-measure in $\ml$.

\begin{lemma}[Local real analyticity of the
Hubbard--Masur map]
\label{lem:HM_local_analytic}
Let $x\in\teich_{g,m}$, and let $q_0\in\mathcal Q_x\setminus \{0\}$
have only simple zeros and a simple pole at every
puncture.
Then there exist a neighborhood $U$ of $q_0$ and a
train-track coordinate chart such that the
Hubbard--Masur map restricted to $U$ is a
real-analytic diffeomorphism onto an open subset
of that chart.
\end{lemma}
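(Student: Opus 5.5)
The plan is to work in period coordinates near $q_0$ and compare them with a train-track chart built from the vertical foliation. Since $q_0$ has only simple zeros and simple poles at the punctures, the stratum of such differentials is open in $\mathcal Q_x$. On a neighborhood $U$ of $q_0$ the zeros and poles move holomorphically and stay distinct. Pass to the canonical double cover $\tilde M\to M$ branched over the zeros and poles of $q$. There $\sqrt{q}$ becomes a holomorphic abelian differential $\omega_q$ that is odd under the covering involution. The map
\[
q\longmapsto \bigl(\textstyle\int_{\gamma}\omega_q\bigr)_{\gamma}\in H^1_-(\tilde M;\mathbb C)
\]
is a holomorphic local chart of $\mathcal Q_x$ near $q_0$. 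Here $H^1_-$ denotes the anti-invariant part, and the complex dimension count $\convgenus=\dim_{\mathbb C}\mathcal Q_x$ matches because the poles are simple. The key observation is that the vertical lamination $v_x(q)$ is determined by the real parts $\operatorname{Re}\int_\gamma \omega_q$, restricted to the fixed complex structure $x$. So the Hubbard--Masur map is the composite of a holomorphic chart with taking real parts, followed by a linear identification with weights.

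To make the last step precise, I would build a train track $\tau$ carrying $v_x(q)$ for all $q\in U$. Take the horizontal (or a suitably perturbed) foliation of $q_0$. Choose a finite collection of horizontal arcs through the zeros and the poles, giving a ``zippered rectangle'' or Veech-type decomposition of $M$ into rectangles foliated vertically. Collapsing the rectangles along vertical leaves gives a train track $\tau$ with one branch per rectangle. Its weight on a branch is the horizontal width of the rectangle, namely $|\operatorname{Re}\int\sqrt q|$ along the horizontal side. For $q$ near $q_0$ the same combinatorial decomposition persists: the separatrix segments move continuously and no new saddle connections appear in a small enough neighborhood. Each width is then $\pm\operatorname{Re}$ of a period of $\omega_q$ over a relative cycle joining zeros. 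These widths are real-analytic in $q$, since they are real parts of holomorphic functions, and they satisfy the switch conditions.

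Because $q_0$ has simple zeros, $\tau$ has trivalent switches and complementary regions that are triangles or punctured monogons. Hence $\tau$ is maximal, and $W(\tau)$ has real dimension $2\convgenus=\dim_{\mathbb R}\mathcal Q_x$. The weight map $U\to E(\tau)$ is then real-analytic between manifolds of equal dimension. Composed with $R_\tau$, it agrees with the Hubbard--Masur map.

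Since $v_x$ is a homeomorphism by Hubbard--Masur, the weight map is injective. To get a real-analytic diffeomorphism, I still need its differential to be invertible at $q_0$. I would argue this via the classical fact that the linearized map $\dot q\mapsto \operatorname{Re}(\text{periods of } \dot q/2\sqrt{q_0})$ is injective. If $\operatorname{Re}\int_\gamma \dot q/\sqrt{q_0}=0$ for all anti-invariant cycles $\gamma$, then the holomorphic one-form $\dot q/\sqrt{q_0}$ has purely imaginary periods. It therefore has zero real-part harmonic function and vanishes, which is the Riemann bilinear argument. The inverse function theorem then gives the real-analytic diffeomorphism onto an open subset of $E(\tau)$, after shrinking $U$.

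The main obstacle is showing that a single train track $\tau$ carries $v_x(q)$ for all $q\in U$, with weights given exactly by real parts of periods. Equivalently, the combinatorics of the rectangle decomposition must be locally constant. I would handle this by choosing horizontal transversals whose endpoints avoid vertical saddle connections of $q_0$, so the construction is open. If $v_x(q_0)$ itself has vertical saddle connections, the open condition may fail on one side. In that case I would instead take $\tau$ from the horizontal foliation side (rectangles transverse to vertical leaves), which only requires finitely many zeros, and is stable under small perturbation.
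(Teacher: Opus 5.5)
\textbf{Where the argument breaks.} Your overall plan matches Dumas's argument, which the paper invokes and extends to punctures via the Masur--Smillie Delaunay triangulation with cone angle $\pi$ at the poles. That plan is: real parts of periods of $\sqrt q$ on the canonical double cover, a train track whose weights are real parts of relative periods, invertibility of the derivative because a holomorphic one-form with purely imaginary periods vanishes, and the inverse function theorem.

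The gap is the step you single out yourself. You need \emph{one} train track $\tau$ that carries $v_x(q)$ for all $q$ near $q_0$, with weights given by a single fixed-sign linear formula. Your zippered-rectangle track comes from the first-return dynamics of the vertical flow to a horizontal transversal $I$. This is not locally constant when the vertical foliation of $q_0$ has a saddle connection $s$, which the principal stratum allows (e.g.\ a Jenkins--Strebel differential with simple zeros). At $q_0$ the separatrix from one endpoint of $s$ runs into the other endpoint. For nearby $q$ it passes to the left or right of that endpoint according to the sign of $\operatorname{Re}\int_s\sqrt q$. Unless $I$ crosses $s$ (and a single horizontal segment need not meet every vertical saddle connection), the rectangle decomposition differs on the two sides of the real-analytic wall $\{\operatorname{Re}\int_s\sqrt q=0\}$ through $q_0$. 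So do your $\tau$ and the signs in your width formulas, and the widths are only piecewise real-analytic.

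Your fallback does not fix this. A track built from the horizontal foliation carries the horizontal foliation, not $v_x(q)$, and it has the same instability at horizontal saddle connections.

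\textbf{What works instead.} You need a construction that depends only on local combinatorics. Take a triangulation of $M$ by $q_0$-saddle connections, as Dumas does. Every edge $e$ stays a saddle connection for nearby $q$, the measure of $v_x(q)$ across $e$ is $|\operatorname{Re}\int_e\sqrt q|$, and the dual trivalent track is fixed by which edge of each triangle is long. The only thing left to control is the sign of $\operatorname{Re}\int_e\sqrt q$. That sign is locally constant on non-vertical edges. At a vertical edge, the tracks on the two sides of the wall are the two splittings of a common track, whose weights are again fixed-sign combinations of real parts of periods.

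In the only use of the lemma, \Cref{lem:common_splitting}, $q_0$ comes from an essentially complete lamination and so has no vertical saddle connections. There your construction can be made to work, but the lemma is stated for the whole principal stratum.

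\textbf{Smaller corrections.}
\begin{enumerate}
\item The map $q\mapsto[\sqrt q]\in H^1_-(\tilde M;\mathbb C)$ is not a chart of $\mathcal Q_x$. Its target has complex dimension $2\convgenus$: these are period coordinates on the stratum of the quadratic-differential bundle over $\teich_{g,m}$, not on the fibre. On the fibre it is only a holomorphic immersion.
\item The statement you actually need is that $q\mapsto\operatorname{Re}[\sqrt q]\in H^1_-(\tilde M;\mathbb R)$ is a local real-analytic diffeomorphism. Your imaginary-periods argument proves this once you note two things. First, $\dot q/\sqrt{q_0}$ is holomorphic at the preimages of the zeros and poles. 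Second, since it is anti-invariant, its anti-invariant periods determine all its periods.
\item You must also show that the linear map $L\colon H^1_-(\tilde M;\mathbb R)\to W(\tau)$ sending real periods to weights is an isomorphism. Since $R_\tau\circ L\circ\operatorname{Re}[\sqrt{\cdot}\,]=v_x$ is injective, $L$ is injective on an open set. Being linear, it is then injective, and hence bijective by equality of dimensions.
\item Trivalence, and complementary regions that are only triangles and punctured monogons, are not automatic for a collapsed zippered-rectangle decomposition. Collapsing $I$ produces a high-valence switch that must be combed. The endpoints of $I$ create bigons unless they are placed on singular leaves.
\end{enumerate}
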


\begin{proof}
For closed surfaces, this follows from Dumas's
adapted train-track construction
\cite[Lemma 5.7 and Theorem 5.8]{Dumas2015}.

For punctured surfaces, we use the same construction
with simple poles included among the singularities
of the flat metric.
After filling in the punctures, an integrable
quadratic differential extends meromorphically
with at most simple poles.
A simple pole is a cone singularity of angle $\pi$.
The Delaunay triangulation construction of
Masur and Smillie applies to these finite-area
flat metrics
\cite[\S4]{MasurSmillie1991}.
The complementary region of the adapted train track
corresponding to a simple pole is a punctured monogon.

On the canonical double cover, the square root of
the quadratic differential extends holomorphically
across the preimages of the simple poles.
Thus the relative-period and symplectic arguments
in Dumas's proof apply without additional pole terms,
giving the required local real-analytic
diffeomorphism.
\end{proof}

\begin{lemma}[Common train track]
\label{lem:common_splitting}
Let $x_1,x_2\in\teich_{g,m}$, and let $\lambda_0\in\ml$ be a measured lamination of rational depth zero. Then there exist a complete train track $\tau$
(in the sense of \cite{PennerHarer1992}), a strictly positive transverse measure $\mu_0\in E(\tau)$ representing $\lambda_0$, and a neighborhood $U$ of $\mu_0$ in $W(\tau)$ such that $U\subset E(\tau)$ and for $i=1,2$, the map
\[
U\ni\mu\longmapsto q_{R_\tau(\mu),x_i}\in\mathcal Q_{x_i}
\]
is a real-analytic diffeomorphism onto an open neighborhood of $q_{\lambda_0,x_i}$, where $R_\tau\colon E(\tau)\to\ml$ is the realizing map defined in \eqref{eq:realizing_map}.
\end{lemma}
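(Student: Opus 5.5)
The plan is to combine \Cref{lem:HM_local_analytic}, applied at each of the two points $x_1,x_2$, with the fact that rational depth zero forces essential completeness. There are three steps.

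\textbf{Step 1: the Hubbard--Masur differentials are generic.} Since $\lambda_0$ has rational depth zero, it is essentially complete by \cite[Proposition 9.5.11]{Thurston-LectureNote}. Its support is therefore maximal, and every complementary region is an ideal triangle or a punctured monogon. The vertical foliation of $q_{\lambda_0,x_i}$ is measure equivalent to $\lambda_0$. A zero of order $k\ge2$ would give a complementary region with $k+2\ge4$ prongs. A regular point or non-simple pole at a puncture would likewise give a region other than a punctured monogon. In either case the support would not be maximal. Hence for $i=1,2$ the differential $q_{\lambda_0,x_i}$ has only simple zeros and a simple pole at every puncture, and \Cref{lem:HM_local_analytic} applies at $q_{\lambda_0,x_i}$ on $x_i$.

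\textbf{Step 2: pass to a common chart.} \Cref{lem:HM_local_analytic} produces, for each $i$, a neighborhood $U_i$ of $q_{\lambda_0,x_i}$ and a train track $\tau_i$ (Dumas's adapted track). On $U_i$ the map $v_{x_i}$ is a real-analytic diffeomorphism onto an open subset of $E(\tau_i)$. The difficulty is that $\tau_1$ and $\tau_2$ need not coincide. I would first choose a complete train track $\tau$ carrying $\lambda_0$ with a strictly positive weight $\mu_0$. Such a $\tau$ exists because a complete track carries an open set around a rational depth zero lamination, and positivity there means $\mu_0$ lies in the interior of $E(\tau)$. By the Penner--Harer theory, splitting sequences of $\tau_i$ and of $\tau$ toward $\lambda_0$ eventually reach a common track $\sigma$ carried by both. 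Here I use that $\lambda_0$ is maximal and that, at rational depth zero, no branch weight ever vanishes, so the splittings are determined and remain complete. The carrying maps $E(\sigma)\to E(\tau)$ and $E(\sigma)\to E(\tau_i)$ are injective linear maps between cones of equal dimension $2\convgenus$. They are therefore linear isomorphisms near the positive weight representing $\lambda_0$, in particular real-analytic. Composing gives
\[
\mu\mapsto q_{R_\tau(\mu),x_i}=v_{x_i}^{-1}\circ R_{\tau_i}\circ(\text{linear transition})(\mu).
\]
On a small neighborhood $U\subset\operatorname{int}E(\tau)$ of $\mu_0$, this composition is a real-analytic diffeomorphism onto an open neighborhood of $q_{\lambda_0,x_i}$, for $i=1,2$. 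Shrinking $U$ to lie in $U_1$ and $U_2$ simultaneously finishes the construction. Since $\mu_0$ is strictly positive, $U\subset W(\tau)$ can be taken to lie in $E(\tau)$.

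\textbf{Main obstacle.} I expect the hardest step to be the existence of the common refinement $\sigma$ with linear transitions on a whole neighborhood, not merely at $\lambda_0$. I would handle it with the standard fact that maximal recurrent tracks carrying a common rational depth zero lamination have a common splitting. In a neighborhood of a lamination with all weights positive, the carrying maps are restrictions of linear maps that are not broken by the piecewise-linear structure, since the PL transitions of $\ml$ break only along rational hyperplanes, and $\lambda_0$ avoids all such hyperplanes.
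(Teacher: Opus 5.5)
Your proposal is correct and follows essentially the paper's argument. You check that $q_{\lambda_0,x_i}$ lies in the principal stratum, which the paper merely asserts, and then apply \Cref{lem:HM_local_analytic} at $x_1$ and $x_2$. You pass to a Penner--Harer common split, using rational depth zero to rule out collisions. Finally, you compose the local real-analytic Hubbard--Masur charts with injective linear carrying maps between weight spaces of equal dimension $2\convgenus$. The only difference is cosmetic: you adjoin an auxiliary complete track $\tau$ and work in its chart, so completeness comes for free. The paper instead uses the common split $\tau_3$ itself as the chart and deduces its completeness from the fact that $R_{\tau_3}(U)$ is an open neighborhood of $\lambda_0$.
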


\begin{proof}
Set $q_i=q_{\lambda_0,x_i}$ for $i=1,2$.
Since $\lambda_0$ has rational depth zero, it is
essentially complete, and each $q_i$ belongs to
the principal stratum.
By \Cref{lem:HM_local_analytic}, choose positively
measured generic train tracks $(\tau_i,\mu_i)$
representing $\lambda_0$ and neighborhoods $U_i$
of $\mu_i$ such that
\[
U_i\ni\mu\longmapsto q_{R_{\tau_i}(\mu),x_i}
\]
is a real-analytic diffeomorphism onto an open neighborhood of $q_i$.

By \cite[Theorem 2.8.5]{PennerHarer1992}, the measured train tracks $(\tau_1,\mu_1)$ and $(\tau_2,\mu_2)$ are equivalent. The common-subtrack theorem therefore yields a positively measured generic train track $(\tau_3,\mu_3)$ obtained from each $(\tau_i,\mu_i)$ by a finite sequence of measured splits, after performing shifts if necessary (cf.\ \cite[Theorem 2.3.1]{PennerHarer1992}).

No collision can occur in either splitting sequence. Indeed, at the first collision, the equality of the two relevant branch weights would give a nontrivial rational linear relation, independent of the switch equations, for the weights representing $\lambda_0$. Pulling this relation back through the preceding shifts and non-collision splits would imply that $\lambda_0$ has positive rational depth, contrary to our assumption. Consequently, the dimensions of $W(\tau_3)$ and $W(\tau_i)$ agree.

Let $C_i\colon W(\tau_3)\to W(\tau_i)$ be the linear carrying map induced by the splitting sequence. Since carrying does not change the represented measured lamination, we have $R_{\tau_i}\circ C_i=R_{\tau_3}$. The injectivity of $R_{\tau_3}$ implies that $C_i$ is injective. Since the source and target have the same dimension, $C_i$ is an isomorphism.

Choose a sufficiently small connected neighborhood $U$ of $\mu_3$ contained in $C_1^{-1}(U_1)\cap C_2^{-1}(U_2)$ and in the positive weight cone. For $i=1,2$, the map
\[
U\ni\mu\longmapsto q_{R_{\tau_3}(\mu),x_i}
=q_{R_{\tau_i}(C_i(\mu)),x_i}
\]
is then a real-analytic diffeomorphism onto an open neighborhood of $q_i$. In particular, $R_{\tau_3}(U)$ is an open neighborhood of $\lambda_0$ in $\ml$. By the standard characterization of complete train tracks, $\tau_3$ is therefore complete (cf.\ \cite[Propositions 1.4.9 and 1.7.6]{PennerHarer1992}). Taking $\tau=\tau_3$ and $\mu_0=\mu_3$ proves the lemma.
\end{proof}

\begin{lemma}
\label{lem:coincidence}
Let $x_1,x_2\in\teich_{g,m}$ and $c\in\mathbb R$. If there is a nonempty open set $V\subset\ml$ such that $\ext_{x_1}(\lambda)=e^c\ext_{x_2}(\lambda)$ for every $\lambda\in V$, then $c=0$ and $x_1=x_2$.
\end{lemma}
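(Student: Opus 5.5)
The plan is to propagate the identity from $V$ to all of $\ml$ by real analyticity, and then conclude using Kerckhoff's formula for the Teichm\"uller distance.

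\emph{Step 1: from $V$ to all of $\ml$.} Since measured laminations of rational depth zero have full $\ThursM$-measure, $V$ contains one such point $\lambda_0$. Apply \Cref{lem:common_splitting} to $x_1,x_2,\lambda_0$. This gives a complete train track $\tau$ and a neighbourhood $U$ of the corresponding positive weight $\mu_0$ on which
\[
\mu\mapsto \ext_{x_i}(R_\tau(\mu))=\|q_{R_\tau(\mu),x_i}\|
\]
is real-analytic for $i=1,2$. Here the norm is real-analytic near a principal-stratum differential because it is the area of a flat structure, a polynomial in the period coordinates. Hence
\[
F(\mu)=\ext_{x_1}(R_\tau(\mu))-e^c\ext_{x_2}(R_\tau(\mu))
\]
is real-analytic on $U$ and vanishes on the open set $R_\tau^{-1}(V)\cap U$.

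To cover the rest of $\ml$, consider the set $\Omega$ of rational-depth-zero points near which the identity holds. This set is open. It is also closed in the full-measure set of depth-zero laminations: at a limit point $\lambda_0$, take the chart of \Cref{lem:common_splitting} around $\lambda_0$; that chart meets $\Omega$ in an open set, so $F\equiv0$ on the connected chart by the identity theorem. Density of depth-zero points, continuity of extremal length, and a chain of overlapping charts joining any two depth-zero points then give
\[
\ext_{x_1}=e^c\ext_{x_2}\quad\text{on all of }\ml .
\]
\textbf{Main obstacle.} The connectivity argument is the hardest step, since the set of depth-zero points is not obviously connected. I would first try to handle it by chaining charts along a generic path in $\ml$ that avoids the countably many rational hyperplanes except in a null way. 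An alternative would be a global real-analytic structure from period coordinates, but these only cover the principal stratum.

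\emph{Step 2: $c=0$.} Once global proportionality holds, apply it along an orbit. Taking $\lambda=q$-vertical laminations does not immediately give $c$, so instead use the constancy of the Hubbard--Masur function:
\[
\ThursM(\mathcal{BML}_{x_1})=\ThursM(\mathcal{BML}_{x_2}).
\]
The identity gives $\mathcal{BML}_{x_1}=e^{-c/2}\mathcal{BML}_{x_2}$, and the homogeneity \eqref{eq:homogeneous_Thurston_measure} yields a factor $e^{-c\convgenus/2}$, forcing $c=0$.

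\emph{Step 3: $x_1=x_2$.} Kerckhoff's formula gives
\[
d_T(x_1,x_2)=\tfrac12\log\sup_{\lambda}\frac{\ext_{x_1}(\lambda)}{\ext_{x_2}(\lambda)} .
\]
With $c=0$ the ratio is identically $1$, so $d_T(x_1,x_2)=0$ and $x_1=x_2$. One could avoid citing Kerckhoff by using the equality case of \eqref{eq:QC-inv_extremal_length}, but citing Kerckhoff is cleanest.
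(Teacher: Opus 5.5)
Your Steps 2 and 3 are fine once $\ext_{x_1}=e^c\ext_{x_2}$ is known on all of $\ml$. There is one small slip: $\mathcal{BML}_{x_1}=e^{-c/2}\mathcal{BML}_{x_2}$ and $\ThursM$ is homogeneous of degree $2\convgenus$, so the volume factor is $e^{-c\convgenus}$, not $e^{-c\convgenus/2}$. Kerckhoff's formula applied in both directions would also give $c=0$.

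The problem is Step 1, which you yourself flag as the main obstacle, and it is a genuine gap. Showing that $\Omega$ is open and closed in the set $\mathcal R$ of rational-depth-zero laminations yields nothing. Inside a train-track chart, any two non-proportional points are separated by a rational hyperplane $\{\ell=0\}$, every point of which has positive depth. So the connected components of $\mathcal R$ there lie in rays.

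For the same reason your proposed remedy cannot work. Every path joining two non-proportional depth-zero points must cross positive-depth points, since no path avoids the rational hyperplanes. At those points \Cref{lem:common_splitting} says nothing, because its no-collision argument is exactly where depth zero is used. So you have no chart around them in which both $\ext_{x_1}$ and $\ext_{x_2}$ are real-analytic. The neighbourhoods $R_\tau(U)$ of nearby depth-zero points may also shrink as they approach such a hyperplane.

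What your argument actually requires is that the union of all these neighbourhoods is connected. Equivalently, its closed complement, which lies in the positive-depth locus (a locus containing whole codimension-one rational hyperplanes), must not separate $\ml$. Nothing you wrote establishes this. Suppose that at such a point the tracks adapted to $q_{\lambda,x_1}$ and $q_{\lambda,x_2}$ differ by a transition that is piecewise linear but not linear. Then the analytic continuation of $e^c\ext_{x_2}$ from one side need not equal $e^c\ext_{x_2}$ on the other side, and the identity theorem cannot be carried across.

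The paper never leaves the neighbourhood $U$ of a single depth-zero point $\lambda_0\in V$. It differentiates the identity at $\mu_0$ using $d_{\mu_0}(\ext_{x_i}\circ R_\tau)(\xi)=2\omega_{Th}(\xi,\Sigma_{\lambda_0,\tau}(x_i))$. Nondegeneracy of Thurston's symplectic form then gives $\Sigma_{\lambda_0,\tau}(x_1)=e^c\Sigma_{\lambda_0,\tau}(x_2)$, i.e.\ proportional horizontal foliations. The argument then splits:
\begin{itemize}
\item If $c=0$, injectivity of $x\mapsto\Sigma_{\lambda_0,\tau}(x)$ gives $x_1=x_2$.
\item If $c\ne0$, the points $x_1\ne x_2$ lie on a Teichm\"uller geodesic with direction $[\lambda_0]$. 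Repeating this at every depth-zero point of $V$ confines those points to at most two projective classes, contradicting that $V$ has positive measure.
\end{itemize}
A local first-order argument of this kind is what has to replace your global propagation. Kerckhoff's formula cannot be used locally, since the supremum is generally attained outside $V$.
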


\begin{proof}
Choose a measured lamination $\lambda_0\in V$ of rational depth zero. By \Cref{lem:common_splitting}, there exist a complete train track $\tau$, a strictly positive measure $\mu_0\in E(\tau)$ representing $\lambda_0$, and a neighborhood $U$ of $\mu_0$ such that $R_\tau(U)\subset V$ and the conclusions of that lemma hold.

Suppose first that $c=0$.
Since $\lambda_0$ is essentially complete, it is maximal. For each $i=1,2$, let $\Sigma_{\lambda_0,\tau}(x_i)\in W(\tau)$ be the element defined in \cite[\S6.2]{MiyachiOhshika2017}. Let $\omega_{Th}$ denote the Thurston symplectic form on $W(\tau)$.
The differential formula for extremal length gives
\[
d_{\mu_0}(\ext_{x_i}\circ R_\tau)(\xi)
=2\omega_{Th}\bigl(\xi,\Sigma_{\lambda_0,\tau}(x_i)\bigr)
\]
for every $\xi\in W(\tau)$ (cf.\ \cite[Theorem 1.1]{Miyachi2013b} and \cite[(1.1) and (6.5)]{MiyachiOhshika2017}). Since the two extremal length functions agree on $R_\tau(U)$, their differentials at $\mu_0$ agree. The nondegeneracy of the Thurston symplectic form therefore implies that
\[
\Sigma_{\lambda_0,\tau}(x_1)
=\Sigma_{\lambda_0,\tau}(x_2).
\]
The map $x\mapsto\Sigma_{\lambda_0,\tau}(x)$ is an embedding by \cite[Th\'eor\`eme 6.1]{MiyachiOhshika2017}. Hence $x_1=x_2$.

Now suppose that $c\ne0$. The same differential argument gives
\[
\Sigma_{\lambda_0,\tau}(x_1)
=e^c\Sigma_{\lambda_0,\tau}(x_2).
\]
Let $F_i$ be the horizontal foliation of $q_{\lambda_0,x_i}$.
From the proof of \cite[Th\'eor\`eme 6.1]{MiyachiOhshika2017}, we have
\begin{align*}
i(\gamma,F_1)=\omega_{Th}(\gamma,\Sigma_{\lambda_0,\tau}(x_1))=e^c\omega_{Th}(\gamma,\Sigma_{\lambda_0,\tau}(x_2))
=e^ci(\gamma,F_2)=i(\gamma,e^cF_2)
\end{align*}
for sufficiently many simple closed curves $\gamma\in E(\tau)$, and hence $F_1=e^cF_2$.
Thus $x_1$ and $x_2$ lie on a Teichm\"uller geodesic
with direction $[\lambda_0]$, and
$d_T(x_1,x_2)=|c|/2>0$.

The same argument applies to every measured lamination
of rational depth zero in $V$.
By the uniqueness of the Teichm\"uller geodesic joining
the distinct points $x_1$ and $x_2$, the projective
classes of all such laminations must belong to the
two-element set determined by the vertical and horizontal
foliations of this geodesic.
Consequently, the set of measured laminations of rational
depth zero in $V$ is contained in the union of at most
two rays in $\ml$.

Each of these rays has zero Thurston measure.
On the other hand, measured laminations of rational
depth zero have full Thurston measure in $V$, and the
nonempty open set $V$ has positive Thurston measure.
This is a contradiction.
% Thus $x_1$ and $x_2$ lie on the same Teichm\"uller geodesic with direction $\lambda_0$ and $d_T(x_1,x_2)=|c|/2$. Moreover,
% \[
% \frac{\ext_{x_1}(F_1)}{\ext_{x_2}(F_2)}=\frac{i(\lambda_0,F_1)}{i(\lambda_0,F_2)}
% =e^c=e^{2\epsilon_c\,d_T(x_1,x_2)},
% \]
% where $\epsilon_c$ is the sign of $c$.
% Therefore, the level locus $\ml_c(x_1,x_2)$ coincides with the ray $\{tF_1\mid t\ge 0\}\subset \ml$, which cannot contain a nonempty open set. This is a contradiction.
\end{proof}

\subsection{Proof of \Cref{thm:level_set}}
Fix distinct points $x_1,x_2\in\teich_{g,m}$ and
$c\in\mathbb R$.
Let $\mathcal R\subset\ml$ be the full-measure set
of measured laminations of rational depth zero.

For each $\lambda_0\in\mathcal R$,
\Cref{lem:common_splitting} gives a complete train
track $\tau$ and a connected open neighborhood $U$
of the weight representing $\lambda_0$ such that
both maps
\[
\mu\longmapsto q_{R_\tau(\mu),x_i},
\qquad i=1,2,
\]
are real analytic.
After shrinking $U$, we may assume that both
$q_{R_\tau(\mu),x_1}$ and $q_{R_\tau(\mu),x_2}$
belong to the principal stratum for every $\mu\in U$.
The function
\[
F(\mu)
=
\ext_{x_1}(R_\tau(\mu))
-e^c\ext_{x_2}(R_\tau(\mu))
\]
is therefore real analytic on $U$, since the
$L^1$-norm is real analytic on the principal stratum
(cf.\ \cite[\S\S5.3--5.4]{Dumas2015}).

By \Cref{lem:coincidence}, $F$ cannot vanish
identically on $U$.
Its zero set consequently has Lebesgue measure
zero \cite{Mityagin2020}.
Since the Thurston measure belongs to the Lebesgue
measure class in train-track coordinates,
\[
\ThursM\bigl(
\ml_c(x_1,x_2)\cap R_\tau(U)
\bigr)=0.
\]

The open sets $R_\tau(U)$ cover $\mathcal R$.
Since $\ml$ is second countable, a countable
subcollection covers $\mathcal R$.
Thus $\ml_c(x_1,x_2)\cap\mathcal R$ is null.
Since $\ml\setminus\mathcal R$ is also null,
$\ml_c(x_1,x_2)$ has zero Thurston measure.

Finally, extremal length is homogeneous of degree
two, so the cone over $\pml_c(x_1,x_2)$ inside
$\mathcal{BML}_x$ is contained in $\ml_c(x_1,x_2)$.
By the definition of the projectivized Thurston
measure in \eqref{eq:def_Th_measure}, we obtain
\[
\PThursM^x(\pml_c(x_1,x_2))=0.
\]
This completes the proof when $\convgenus\ge2$.

\section{The Hopf decomposition for subgroup actions}
\label{sec:Hopf_decomposition}
In this section, we prove \Cref{thm:Hopf_decomposition_main,thm:main_classfication_limit_sets}.
We first recall the measure-theoretic terminology and state a refinement
of the description of the dissipative part in terms of the ideal boundary
of a Dirichlet polyhedron. We then establish the preliminary lemmas and
prove the assertions in turn. All null sets on $\pml$ are understood
with respect to the Thurston measure class.

\subsection{Conservativity and dissipativity}
\label{subsec:conservativity_dissipativity}
Let $(X,P)$ be a standard probability space, completed if necessary.
Suppose that a countable group $\Gamma$ acts on $X$ by nonsingular
transformations; that is, $P$ is quasi-invariant under the action.

A measurable subset $B\subset X$ is called a \emph{wandering set} if $P(B\cap\gamma(B))=0$ for every $\gamma\in\Gamma\setminus\{\mathrm{id}\}$.
The action on a $\Gamma$-invariant measurable subset $Z\subset X$ is called \emph{dissipative} if there exists a measurable wandering set $B\subset Z$ such that
\[
P\left(Z\setminus\bigcup_{\gamma\in\Gamma}\gamma(B)\right)=0.
\]
The action on a $\Gamma$-invariant measurable subset $Y\subset X$ is called \emph{conservative} if, for every measurable subset $B\subset Y$ of positive measure, $P(B\cap\gamma(B))>0$ for infinitely many $\gamma\in\Gamma$.

For a subgroup $G\le\tmod(g,m)$, we call its action
\emph{conservative} or \emph{totally dissipative} if its action
on the whole space $\pml$ is conservative or dissipative,
respectively.
For background, see \cite{Pommerenke1976}, \cite{Sullivan1981} and \cite{Nicholls1989}. See also \cite{Kaimanovich2010} and \cite[Definition D11.3]{HubbardVol3_2022}.

\begin{remark}
Our terminology differs slightly from that of \cite{Kaimanovich2010}. Our notion of conservativity corresponds to his notion of \emph{infinite conservativity}, whereas our notion of dissipativity corresponds to his notion of \emph{complete dissipativity}. In his terminology, an action is conservative if every measurable set of positive measure is recurrent.
\end{remark}

The action of a group $\Gamma$ on a measurable space $(X,\mu)$ is said to be \emph{essentially free} if the fixed point set $\operatorname{Fix}(\gamma)=\{x\in X\mid \gamma(x)=x\}$ is null for all $\gamma\in \Gamma\setminus \{\mathrm{id}\}$.
For an essentially free action, Kaimanovich's conservative and infinitely conservative parts agree modulo null sets. This distinction therefore disappears for the subgroup actions on $\pml$ considered below, by \Cref{lem:Fix_null}.

\subsection{Proof strategy and a refinement of the main results}
\label{subsec:Main_resut_of_SectionDichotomy}
We use the sets $\conservativepart(G)=\conservativepart_x(G)$ and
$\dissipativepart(G)=\dissipativepart_x(G)$ defined in the Introduction
by divergence and convergence, respectively, of the sum of the
Radon--Nikodym derivatives. Their independence of $x$ and their
$G$-invariance are proved in \Cref{lem:Cons-Diss-ElementaryProperty}.

The proof of \Cref{thm:Hopf_decomposition_main} is given in
\S\ref{subsec:ProofOfDecomposition}.
The identification of the conservative part with $\Lambda_H(G)$,
which is the first assertion of \Cref{thm:main_classfication_limit_sets},
is proved in \S\ref{subsec:Proof_of_ConservativePart_BigHorospherical}.
For the dissipative part, we prove the following stronger statement,
which also describes it in terms of the ideal boundary of a Dirichlet
polyhedron. It is analogous to Sullivan's description for Kleinian
groups (cf.\ \cite[Corollary in Section IV]{Sullivan1981}).

\begin{theorem}[The dissipative part and Dirichlet polyhedra]
\label{thm:Dissipative_part}
Let $G$ be a nontrivial subgroup of the Teichm\"uller modular group, and let $x_0\in\teich_{g,m}$. Suppose that $\operatorname{Stab}_G(x_0)=\{\mathrm{id}\}$. Then the symmetric differences
\[
\dissipativepart(G)\bigtriangleup \bigcup_{[\omega]\in G}[\omega](\Dirichlet^\infty_G(x_0)),\quad
\dissipativepart(G)\bigtriangleup \operatorname{Dir}_G(x_0)
\]
are null with respect to $\PThursM^{x}$ for all $x\in \teich_{g,m}$.
\end{theorem}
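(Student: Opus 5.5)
Throughout, I take the ideal boundary $\Dirichlet^\infty_G(x_0)$ to be described, on the uniquely ergodic locus, by
\[
\Dirichlet^\infty_G(x_0)\cap\pml^{ue}=\{[\lambda]\in\pml^{ue}\mid \ext_{x_0}(\lambda)\le \ext_{[\omega](x_0)}(\lambda)\ \text{for all } [\omega]\in G\}.
\]
This is the boundary trace of the McCarthy--Papadopoulos polyhedron, read through \eqref{eq:busemann_ue}. Since $\pml^{ue}$ has full measure, I may restrict to it throughout.

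The plan is to work on a single $G$-invariant set $E\subset\pml$ of full measure on which all three sets become explicit. Let $E$ be $\pml^{ue}$ minus the countable union of the level loci $\pml_0([\omega_1](x_0),[\omega_2](x_0))$ over pairs $[\omega_1]\ne[\omega_2]$ in $G$. Because $\operatorname{Stab}_G(x_0)=\{\mathrm{id}\}$, the orbit points $[\omega_1](x_0)$ and $[\omega_2](x_0)$ are distinct, so each locus is null by \Cref{thm:level_set}, and $E$ has full measure. The set $E$ is $G$-invariant, since $\ext_{[\omega](y)}([\omega](\lambda))=\ext_y(\lambda)$ permutes these loci and $\pml^{ue}$ is invariant. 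On $E$, the function $[\omega]\mapsto\ext_{[\omega](x_0)}(\lambda)$ is injective on $G$, so whenever its minimum is attained it is attained at a unique orbit point.

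First step: show that on $E$ the three sets coincide.
\begin{itemize}
\item $\Dirichlet^\infty_G(x_0)\cap E$ is the set of $[\lambda]$ whose unique minimizer is $x_0$.
\item By equivariance, $[\omega](\Dirichlet^\infty_G(x_0))\cap E$ is the set whose unique minimizer is $[\omega](x_0)$.
\item Hence these translates are pairwise disjoint on $E$, so $W:=\Dirichlet^\infty_G(x_0)\cap E$ is a wandering set.
\item By \eqref{eq:horo_definition4}, the union of the translates of $W$ is exactly $\operatorname{Dir}_G(x_0)\cap E$. When the minimum is attained, the infimum is a minimum of positive numbers and so is automatically positive.
\end{itemize}
Consequently the second symmetric difference in the statement reduces to the first, and it suffices to compare $\dissipativepart(G)$ with $\operatorname{Dir}_G(x_0)$.

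Second step: prove $\dissipativepart(G)\cap\pml^{ue}\subset\operatorname{Dir}_G(x_0)$ pointwise. By \Cref{lem:Cons-Diss-ElementaryProperty} I may compute the defining series with basepoint $x=x_0$. If $\sum_{[\omega]\in G}\bigl(\ext_{x_0}(\lambda)/\ext_{[\omega](x_0)}(\lambda)\bigr)^{\convgenus}<\infty$, then for each $M$ only finitely many $[\omega]$ satisfy $\ext_{[\omega](x_0)}(\lambda)\le M$. Hence $\ext_{[\omega](x_0)}(\lambda)\to\infty$ along $G$, the infimum in \eqref{eq:horo_definition4} is attained at some orbit point, and its value is positive. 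So $[\lambda]$ is a Dirichlet point.

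Third step: the reverse inclusion modulo null sets, via the wandering set $W$. By \eqref{eq:Radon-Nikodym2} and Tonelli,
\[
\int_W\sum_{[\omega]\in G}\frac{d([\omega]_*\PThursM^{x_0})}{d\PThursM^{x_0}}\,d\PThursM^{x_0}
=\sum_{[\omega]\in G}\PThursM^{x_0}([\omega]^{-1}(W))\le 1.
\]
The equality uses the change of variables for pushforwards; the inequality uses the disjointness of the translates. Therefore the series is finite $\PThursM^{x_0}$-almost everywhere on $W$, so $W\subset\dissipativepart(G)$ modulo null sets. Since $\dissipativepart(G)$ is $G$-invariant and $G$ is countable, the union of all translates of $W$ also lies in $\dissipativepart(G)$ modulo null sets. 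Combining the three steps gives both null symmetric differences. Nullity with respect to every $\PThursM^x$ then follows from mutual absolute continuity.

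The main obstacle I expect is the first step. It requires pinning down the precise definition of $\Dirichlet^\infty_G(x_0)$, together with its non-strict inequalities and its behaviour at boundary points that are not uniquely ergodic. The goal is to confirm that, after removing the null set $\pml\setminus E$, it agrees exactly with the extremal-length description above. This is where \Cref{thm:level_set} is indispensable, because ties between orbit points would otherwise destroy the wandering property. A secondary issue is measurability of $\operatorname{Dir}_G(x_0)$ and $\Dirichlet^\infty_G(x_0)$. On $E$ this is handled by writing them as countable Boolean combinations of the sets $\{\ext_{[\omega](x_0)}(\lambda)<\ext_{[\omega'](x_0)}(\lambda)\}$, which are Borel because extremal length is continuous.
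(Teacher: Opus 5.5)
Your measure-theoretic skeleton matches the paper's. It has a conull $G$-invariant set on which \Cref{thm:level_set} makes orbit minimizers unique, summability forcing the minimum to be attained (your Step 2), Tonelli on a wandering set (your Step 3), and $G$-invariance of $\dissipativepart(G)$. The gap is the identification you adopt at the outset and then defer as ``the main obstacle''. You need that, on $E$, the set $\Dirichlet^\infty_G(x_0)$ is exactly the set of $[\lambda]$ at which $x_0$ is the unique minimizer of $[\omega]\mapsto\ext_{[\omega](x_0)}(\lambda)$. This does not follow from reading the definition through \eqref{eq:busemann_ue}. $\Dirichlet^\infty_G(x_0)$ is the trace on $\pml$ of the closure of $\Dirichlet_G(x_0)$ in the Thurston compactification, so membership is detected by arbitrary sequences $x_n\in\Dirichlet_G(x_0)$ tending to $[\lambda]$. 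By contrast, \eqref{eq:busemann_ue} only evaluates the Busemann function, which is a limit along the single ray $\ray^\lambda_{x_0}$. Everything downstream depends on this identification: the wandering property of $W$, the equality $\bigcup_{[\omega]}[\omega](W)=\operatorname{Dir}_G(x_0)\cap E$, and hence the inclusion of $\dissipativepart(G)$ in the union of translates of $\Dirichlet^\infty_G(x_0)$. In the paper this is the separate geometric lemma \Cref{lem:Dirichlet_geodesic_horoball}.

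Only two inclusions are needed, and only for $[\lambda]$ in your tie-free set $E$, so your non-strict description never has to be checked at tie points.

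(i) If $[\lambda]\in\Dirichlet^\infty_G(x_0)\cap\pml^{ue}$, then $\busemann^{x_0}_\lambda([\omega](x_0))\ge0$ for every $[\omega]\in G$. The paper proves this by showing that the whole ray $\ray^\lambda_{x_0}$ lies in $\Dirichlet_G(x_0)$. Write $x_n=\ray^{\lambda_n}_{x_0}(t_n)$. Convergence of $x_n$ to $[\lambda]$ in the Gardiner--Masur compactification gives $[\lambda_n]\to[\lambda]$; this is where unique ergodicity enters. Then Teichm\"uller's theorem, Lemma~4.5 of McCarthy--Papadopoulos, and starlikeness rule out an exit point. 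Finally, $d_T(\ray^\lambda_{x_0}(t),[\omega](x_0))\ge t$ passes to the Busemann limit. Alternatively, the same Gardiner--Masur convergence together with Kerckhoff's formula gives $d_T(x_n,[\omega](x_0))-d_T(x_n,x_0)\to\busemann^{x_0}_\lambda([\omega](x_0))$, and you can pass to the limit in $d_T(x_n,x_0)<d_T(x_n,[\omega](x_0))$.

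(ii) Suppose $\ext_{x_0}(\lambda)<\ext_{[\omega](x_0)}(\lambda)$ for all $[\omega]\ne\mathrm{id}$. Then $t\mapsto d_T([\omega](x_0),\ray^\lambda_{x_0}(t))-t$ is nonincreasing with limit $\busemann^{x_0}_\lambda([\omega](x_0))>0$. So the ray lies in $\Dirichlet_G(x_0)$, and by Masur's theorem it converges to $[\lambda]$ in the Thurston compactification.

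You also need the equivariance $[\omega](\Dirichlet^\infty_G(x_0))=\Dirichlet^\infty_G([\omega](x_0))$. With these in place your three steps are correct, and the argument is essentially the paper's, organized slightly more economically around the single invariant set $E$.
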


The set $\Dirichlet^\infty_G(x_0)$ appearing in \Cref{thm:Dissipative_part} is the ideal boundary of the Dirichlet polyhedron $\Dirichlet_G(x_0)$ centered at $x_0$; it is discussed in \S\ref{subsec:DirichletDomain_Ideal}. We show that $\Dirichlet^\infty_G(x_0)$ is a wandering set and that almost every point of $\Dirichlet^\infty_G(x_0)$ is a Dirichlet point (cf. \Cref{lem:almost-unique}).
The proof of \Cref{thm:Dissipative_part} is given in \S\ref{subsec:Proof-Dissipative_part}.

\begin{remark}
The proof of \cite[Proposition D11.5]{HubbardVol3_2022} shows that, if $\operatorname{Stab}_G(x)=\{\mathrm{id}\}$, then the action of $G$ on $\dissipativepart(G)$ is dissipative with respect to $\PThursM^x$. Although that proposition is stated for torsion-free groups, its proof of dissipativity only requires the stabilizer at the basepoint to be trivial in our case. Moreover, \Cref{thm:Hopf_decomposition_main} shows that $\dissipativepart(G)$ is the maximal dissipative invariant subset modulo null sets.
\end{remark}

\subsubsection{A characterization of conservative actions}
As a consequence of \Cref{thm:Hopf_decomposition_main,thm:main_classfication_limit_sets,thm:Dissipative_part}, we obtain the following characterization of conservative subgroup actions, analogous to \cite[Corollary in Section IV]{Sullivan1981}.

\begin{corollary}[A characterization of conservative actions]
\label{coro:conservativity_Dirichletset}
Let $G$ be a nontrivial subgroup of the Teichm\"uller modular group, and let $x_0\in\teich_{g,m}$ satisfy $\operatorname{Stab}_G(x_0)=\{\mathrm{id}\}$. 
For every $x\in\teich_{g,m}$, the following five conditions are equivalent:
\begin{itemize}
\item[(a)] the action of $G$ on $(\pml,\PThursM^{x})$ is conservative;
\item[(b)] $\Lambda_H(G)$ has full $\PThursM^{x}$-measure;
\item[(c)] $\dissipativepart(G)$ has zero $\PThursM^{x}$-measure;
\item[(d)] $\operatorname{Dir}_G(x_0)$ has zero $\PThursM^{x}$-measure;
\item[(e)] $\Dirichlet^\infty_G(x_0)$ has zero $\PThursM^{x}$-measure.
\end{itemize}
\end{corollary}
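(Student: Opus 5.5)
The plan is to derive the five equivalences directly from \Cref{thm:Hopf_decomposition_main}, \Cref{thm:main_classfication_limit_sets} and \Cref{thm:Dissipative_part}, which we take as already established. The measures $\PThursM^{x}$ are mutually absolutely continuous for all $x$, so every null statement below is independent of $x$. We may therefore fix $x$ and work only with the Thurston measure class.

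First, we show (a)$\Leftrightarrow$(c). By \Cref{thm:Hopf_decomposition_main}, $\pml=\conservativepart(G)\sqcup\dissipativepart(G)$ is the Hopf decomposition modulo null sets. The action of $G$ on $\dissipativepart(G)$ is dissipative. Hence, if $\dissipativepart(G)$ had positive measure, a wandering set $B\subset\dissipativepart(G)$ of positive measure would exist. Such a set satisfies $P(B\cap\gamma B)=0$ for every $\gamma\ne\mathrm{id}$, which contradicts conservativity. This gives (a)$\Rightarrow$(c). Conversely, if $\dissipativepart(G)$ is null, then $\pml$ coincides with $\conservativepart(G)$ up to a null set. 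Conservativity on $\conservativepart(G)$ then passes to $\pml$, because any positive-measure set $B$ has a positive-measure part in $\conservativepart(G)$, and that part already returns infinitely often. This gives (c)$\Rightarrow$(a).

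Next, we show (b)$\Leftrightarrow$(c). By \Cref{thm:main_classfication_limit_sets}, $\Lambda_H(G)$ agrees with $\conservativepart(G)$ modulo null sets. Since $\conservativepart(G)$ and $\dissipativepart(G)$ are complementary, $\Lambda_H(G)$ has full measure exactly when $\dissipativepart(G)$ is null. The equivalence (c)$\Leftrightarrow$(d) follows in the same way from the second assertion of \Cref{thm:main_classfication_limit_sets}, or equally from \Cref{thm:Dissipative_part}. Both use the hypothesis $\operatorname{Stab}_G(x_0)=\{\mathrm{id}\}$ and give $\dissipativepart(G)\bigtriangleup\operatorname{Dir}_G(x_0)$ null.

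Finally, for (c)$\Leftrightarrow$(e) we use the first symmetric difference in \Cref{thm:Dissipative_part}. It shows that $\dissipativepart(G)$ agrees up to a null set with $\bigcup_{[\omega]\in G}[\omega](\Dirichlet^\infty_G(x_0))$. This union is countable, since $G\le\tmod(g,m)$ is countable. Each translate $[\omega](\Dirichlet^\infty_G(x_0))$ is null if and only if $\Dirichlet^\infty_G(x_0)$ is null, by the quasi-invariance in \Cref{lem:quasi-invariance}. A countable union of sets, each null exactly when $\Dirichlet^\infty_G(x_0)$ is, is null if and only if $\Dirichlet^\infty_G(x_0)$ is null. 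This gives (c)$\Leftrightarrow$(e).

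No step here is a genuine obstacle, since all the substance lies in the three theorems. The one point to check is that "conservative" in (a) is conservativity of the action on all of $\pml$, matching the definition in \S\ref{subsec:conservativity_dissipativity}. This requires that a dissipative invariant set of positive measure really does contain a wandering set of positive measure. That follows from the definition, because $\dissipativepart(G)$ is covered up to a null set by countably many translates of a wandering set $B$, so $B$ itself must have positive measure.
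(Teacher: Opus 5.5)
Your proposal is correct and follows the paper's own proof. You derive (a)$\Leftrightarrow$(c) from \Cref{thm:Hopf_decomposition_main}, (b) from the first assertion of \Cref{thm:main_classfication_limit_sets}, and (d) and (e) from \Cref{thm:Dissipative_part}, with (e) using nonsingularity and countability of $G$ to see that $\bigcup_{[\omega]\in G}[\omega](\Dirichlet^\infty_G(x_0))$ is null if and only if $\Dirichlet^\infty_G(x_0)$ is; you simply make explicit the routine measure-theoretic details the paper leaves implicit, such as why a positive-measure dissipative part contains a positive-measure wandering set.
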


\begin{proof}
By \Cref{thm:Hopf_decomposition_main}, conditions (a) and (c) are
equivalent. The first assertion of
\Cref{thm:main_classfication_limit_sets} gives the equivalence with
(b), and \Cref{thm:Dissipative_part} gives the equivalence with (d).
Finally, a countable union of translates of
$\Dirichlet^\infty_G(x_0)$ is null if and only if
$\Dirichlet^\infty_G(x_0)$ is null, by nonsingularity of the action.
Another application of \Cref{thm:Dissipative_part} proves the
equivalence with (e).
\end{proof}

\subsection{Preliminary lemmas}
We first establish two lemmas needed for the proof of \Cref{thm:Hopf_decomposition_main}.
\begin{lemma}
\label{lem:Cons-Diss-ElementaryProperty}
Let $G$ be a subgroup of the Teichm\"uller modular group.
\begin{itemize}
\item[(a)]
The sets $\conservativepart(G)$ and $\dissipativepart(G)$ are independent of the choice of basepoint $x\in \teich_{g,m}$.
\item[(b)]
$\conservativepart(G)$ and $\dissipativepart(G)$ are invariant under
the action of $G$.
\end{itemize}
\end{lemma}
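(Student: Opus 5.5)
For (a), the plan is to compare the summands at two basepoints $x,y\in\teich_{g,m}$ using the quasiconformal invariance \eqref{eq:QC-inv_extremal_length}. Fix $[\lambda]\in\pml$ with a representative $\lambda\neq 0$, and fix $[\omega]\in G$. Since $[\omega]$ acts on $\teich_{g,m}$ by isometries of $d_T$, we have $d_T([\omega](x),[\omega](y))=d_T(x,y)$. Hence
\[
e^{-4d_T(x,y)}\le
\frac{\ext_x(\lambda)}{\ext_{[\omega](x)}(\lambda)}
\cdot
\frac{\ext_{[\omega](y)}(\lambda)}{\ext_y(\lambda)}
\le e^{4d_T(x,y)}.
\]
The key point is that the constant does not depend on $[\omega]$. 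Raising to the power $\convgenus$ and summing over $G$ shows that the two series (for $x$ and for $y$) are comparable within the factor $e^{4\convgenus d_T(x,y)}$. So one diverges exactly when the other does, which gives $\conservativepart_x(G)=\conservativepart_y(G)$ and, by complementation, $\dissipativepart_x(G)=\dissipativepart_y(G)$. Each summand is independent of the representative $\lambda$, since extremal length is homogeneous of degree two and the ratio cancels the scaling.

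For (b), take $[\eta]\in G$ and $[\lambda]\in\conservativepart(G)$. We want $[\eta]([\lambda])\in\conservativepart(G)$. By (a), we may compute the series for $[\eta](\lambda)$ at the basepoint $[\eta](x)$. The summand indexed by $[\omega]$ becomes
\[
\left(\frac{\ext_{[\eta](x)}([\eta](\lambda))}{\ext_{[\omega][\eta](x)}([\eta](\lambda))}\right)^{\convgenus}.
\]
We then use the equivariance $\ext_{[\sigma](y)}([\sigma](\mu))=\ext_y(\mu)$ that appears in the proof of \Cref{lem:quasi-invariance}. This turns the numerator into $\ext_x(\lambda)$ and the denominator into $\ext_{[\eta]^{-1}[\omega][\eta](x)}(\lambda)$. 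As $[\omega]$ runs over $G$, the conjugate $[\eta]^{-1}[\omega][\eta]$ also runs bijectively over $G$, because $[\eta]\in G$. So the series is just a reindexing of the series for $[\lambda]$ at $x$, and the two diverge or converge together. The same computation handles $\dissipativepart(G)$, and applying it to $[\eta]^{-1}$ gives equality of the sets rather than only inclusion.

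No step is a real obstacle. The only points needing care are that the constant in (a) is uniform in $[\omega]$, which follows from $G$ acting isometrically, and that the reindexing in (b) uses conjugation by an element of $G$ itself. This second point is exactly where invariance would fail for a non-normalizing element of $\tmod(g,m)$.
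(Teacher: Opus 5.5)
Your proof is correct and follows essentially the same route as the paper. Part (a) uses quasiconformal invariance \eqref{eq:QC-inv_extremal_length}, with a constant $e^{4\convgenus d_T(x,y)}$ that is uniform in $[\omega]$ because $G$ acts by $d_T$-isometries; part (b) uses equivariance of extremal length, reindexing by $[\omega]\mapsto[\eta]^{-1}[\omega][\eta]$, and the basepoint independence from (a), the only cosmetic difference being that you evaluate at $[\eta](x)$ to get exactly the series for $\lambda$ at $x$, whereas the paper evaluates at $x$ and gets the series for $\lambda$ at $[\eta]^{-1}(x)$.
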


\begin{proof}
\textbf{(a).}
By \eqref{eq:QC-inv_extremal_length} and the invariance of $d_T$ under the action of $G$, we have
\[
e^{-4\convgenus d_T(x,y)}
\left(
\frac{\ext_x(\lambda)}
{\ext_{[\omega](x)}(\lambda)}
\right)^{\convgenus}
\le
\left(
\frac{\ext_y(\lambda)}
{\ext_{[\omega](y)}(\lambda)}
\right)^{\convgenus}
\le
e^{4\convgenus d_T(x,y)}
\left(
\frac{\ext_x(\lambda)}
{\ext_{[\omega](x)}(\lambda)}
\right)^{\convgenus}
\]
for all $x,y\in\teich_{g,m}$ and $[\omega]\in G$.
Summing over $[\omega]\in G$ shows that convergence or divergence
of the defining series is independent of the choice of basepoint.

\medskip
\noindent
\textbf{(b).}
For $[\eta]\in G$, the equivariance of extremal length gives
\begin{align*}
&\sum_{[\omega]\in G}
\left(
\frac{\ext_x([\eta](\lambda))}
     {\ext_{[\omega](x)}([\eta](\lambda))}
\right)^{\convgenus} =
\sum_{[\omega]\in G}
\left(
\frac{\ext_{[\eta]^{-1}(x)}(\lambda)}
     {\ext_{[\eta]^{-1}\circ[\omega](x)}(\lambda)}
\right)^{\convgenus}=
\sum_{[\gamma]\in G}
\left(
\frac{\ext_{[\eta]^{-1}(x)}(\lambda)}
     {\ext_{[\gamma]\circ[\eta]^{-1}(x)}(\lambda)}
\right)^{\convgenus},
\end{align*}
where $[\gamma]=[\eta]^{-1}\circ[\omega]\circ[\eta]$.
Since the convergence or divergence of the defining series is independent of the choice of basepoint, this proves~(b).
\end{proof}

\begin{lemma}[Essential freeness]
\label{lem:Fix_null}
The action of $\tmod(g,m)$ on $\pml$ is essentially free.
Namely, for $[\omega]\in \tmod(g,m)\setminus \{\mathrm{id}\}$, 
\[
\operatorname{Fix}([\omega])
=
\{[\lambda]\in\pml\mid
[\omega]([\lambda])=[\lambda]\}
\]
is $\PThursM^x$-null for every $x\in\teich_{g,m}$.
\end{lemma}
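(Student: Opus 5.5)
The plan is to reduce everything to the Nielsen--Thurston classification and to the null sets already at hand: the rays of \Cref{lem:coincidence}, the complement of $\ml^{ue}$, and the level loci of \Cref{thm:level_set}. Let $[\omega]\in\tmod(g,m)$ be nontrivial. Since $\ml^{ue}$ has full Thurston measure, $\pml^{ue}$ has full $\PThursM^x$-measure. It therefore suffices to show that $\operatorname{Fix}([\omega])\cap\pml^{ue}$ is null.

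Take a uniquely ergodic $[\lambda]$ with $[\omega]([\lambda])=[\lambda]$, so that $[\omega](\lambda)=s\lambda$ for some $s>0$. We split into cases according to the Nielsen--Thurston type of $[\omega]$.

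\emph{Pseudo-Anosov case.} The fixed points of $[\omega]$ in $\pml$ are exactly its two projective measured laminations. A finite set is null because $\PThursM^x$ is nonatomic.

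\emph{Finite-order case.} Here $[\omega]$ fixes a point $y\in\teich_{g,m}$ by Kerckhoff's realization theorem.

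\emph{Reducible case.} $[\omega]$ preserves a nonempty reduction multicurve $\sigma$. The invariant relation gives $i(\lambda,\sigma)=i([\omega]\lambda,[\omega]\sigma)=s\,i(\lambda,\sigma)$, which is not immediately decisive. A uniquely ergodic lamination is filling, however, and the Nielsen--Thurston theory shows that every $[\omega]$-invariant projective class of a filling lamination forces $[\omega]$ to be pseudo-Anosov, or periodic with $s=1$. I would try to cite this classical fact (Thurston / Fathi--Laudenbach--Poenaru, Expos\'e 9 or 11): if $[\omega]$ fixes an arational $[\lambda]$ with $s\neq1$, then $[\omega]$ is pseudo-Anosov; if $s=1$, then $[\omega]$ has a power acting trivially on the filling lamination's curve complex neighborhood, and hence is periodic. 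Concretely, for a filling $\lambda$ the stabilizer of $[\lambda]$ in the mapping class group is virtually cyclic, generated up to finite index by a pseudo-Anosov fixing $[\lambda]$ if one exists. This reduces matters to the pseudo-Anosov case and the finite-order case with $s=1$.

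\emph{Remaining finite-order case.} If $[\omega]$ has finite order and fixes $y$, then a fixed uniquely ergodic $[\lambda]$ satisfies $[\omega]\lambda=\lambda$, since $s=1$ by finite order. Choose any point $x_1\neq[\omega]^{-1}(x_1)$; such points exist because $[\omega]$ acts nontrivially on $\teich_{g,m}$, its fixed set being a proper real-analytic subvariety. By equivariance,
\[
\ext_{[\omega]^{-1}(x_1)}(\lambda)=\ext_{x_1}([\omega]\lambda)=\ext_{x_1}(\lambda).
\]
Hence the fixed set lies in $\pml_0(x_1,[\omega]^{-1}(x_1))$, which is null by \Cref{thm:level_set}.

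This level-locus trick in fact handles every fixed point with $s=1$ uniformly, without any classification. For a general $[\omega]$ and a fixed $[\lambda]$ with scale $s$, the same computation gives
\[
\ext_{[\omega]^{-1}(x_1)}(\lambda)=s^{2}\ext_{x_1}(\lambda),
\]
so $[\lambda]\in\pml_{2\log s}(x_1,[\omega]^{-1}(x_1))$. The only issue is that $s$ varies with $[\lambda]$, giving uncountably many level loci.

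The main obstacle is therefore controlling the set of possible scales $s$. My first attempt is to show that $s$ ranges over a countable set; indeed $s$ is an eigenvalue of the piecewise-linear action of $[\omega]$ in train-track charts. In each of the countably many charts and linear pieces, $[\omega]$ acts by an integral matrix, which has finitely many eigenvalues. The full fixed set is thus contained in a countable union of level loci, plus the non-uniquely-ergodic null set. Each level locus is null by \Cref{thm:level_set}, applied with $x_1\neq[\omega]^{-1}(x_1)$, and this finishes the proof. If the piecewise-linear eigenvalue argument proves delicate near nonlinearity loci, I would fall back on the classification argument above for $s\neq1$.
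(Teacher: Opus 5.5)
Your proposal is correct. It combines the paper's argument with a genuinely different one. The pseudo-Anosov case (a finite fixed set, null because the measure is nonatomic) and the finite-order case ($s=1$, so $\operatorname{Fix}([\omega])\subset\pml(x_1,[\omega]^{-1}(x_1))$, null by \Cref{thm:level_set}) are exactly what the paper does.

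In the reducible case you undersell your own identity. Almost every $\lambda$ is filling, so $i(\lambda,\sigma)>0$, and then $i(\lambda,\sigma)=s\,i(\lambda,\sigma)$ forces $s=1$. Your level-locus trick then finishes immediately. This is precisely the paper's argument: it passes to a power $[\varphi]=[\omega]^n$ fixing a single reduction curve $\alpha$ and gets $\operatorname{Fix}([\omega])\subset\{i(\alpha,\lambda)=0\}\cup\pml(x_0,[\varphi]^{-1}(x_0))$. Your appeal to virtual cyclicity of stabilizers of filling laminations is true and citable, but unnecessary. The justification you sketch for it (a power ``acting trivially on a curve complex neighborhood'', hence periodic) is not an argument.

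Your last paragraph is a genuinely different, classification-free route, and it works. In a global PL chart such as Dehn--Thurston coordinates, $[\omega]$ is linear on each of finitely many cones. Hence the scale $s$ at a fixed point is an eigenvalue of one of finitely many matrices. Alternatively, for $\lambda$ of rational depth zero, apply the common-splitting argument in the proof of \Cref{lem:common_splitting} to $\tau$ and $[\omega]\tau$, which both carry $\lambda$. This realizes $s$ as an eigenvalue of a transition matrix from a countable family.

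Either way, $\operatorname{Fix}([\omega])\cap\pml^{ue}$ lies in countably many level loci, each null by \Cref{thm:level_set}. This route avoids the Nielsen--Thurston trichotomy and reduction systems entirely. The price is that it uses \Cref{thm:level_set} for $c\neq 0$ and the PL structure of the action, whereas the paper needs only $c=0$ and the nullity of non-filling laminations.

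Two small slips:
\begin{enumerate}
\item With the paper's convention, the locus is $\pml_{2\log s}([\omega]^{-1}(x_1),x_1)$, not $\pml_{2\log s}(x_1,[\omega]^{-1}(x_1))$. This is harmless, since all $c$ are covered.
\item Unique ergodicity does not imply filling when $\convgenus=1$, because simple closed curves are uniquely ergodic there. That exceptional set is countable, hence null, so nothing breaks.
\end{enumerate}
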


\begin{proof}
Suppose first that $[\omega]$ has finite order. Since the action of $\tmod(g,m)$ on $\teich_{g,m}$ is faithful and $[\omega]\ne\mathrm{id}$, we may choose $x_0\in\teich_{g,m}$ such that $[\omega](x_0)\ne x_0$.
We also choose $k\ge1$ such that $[\omega]^k=\mathrm{id}$. If $[\lambda]\in\operatorname{Fix}([\omega])$, then
\[
[\omega](\lambda)=c\lambda
\]
for some $c>0$. Since $[\omega]^k(\lambda)=c^k\lambda=\lambda$, we have
$c=1$.

By equivariance of extremal length,
\begin{align*}
\operatorname{Fix}([\omega])
&\subset
\left\{
[\lambda]\in\pml\mathrel{}\middle|\mathrel{}
\ext_{x_0}(\lambda)
=
\ext_{x_0}([\omega](\lambda))
\right\} \\
&=
\left\{
[\lambda]\in\pml\mathrel{}\middle|\mathrel{}
\ext_{x_0}(\lambda)
=
\ext_{[\omega]^{-1}(x_0)}(\lambda)
\right\} \\
&=
\pml\bigl(x_0,[\omega]^{-1}(x_0)\bigr).
\end{align*}
This set is $\PThursM^{x_0}$-null by \Cref{thm:level_set}. Since the
measures $\PThursM^x$ are mutually absolutely continuous, it is
$\PThursM^x$-null for every $x\in\teich_{g,m}$.

Next, suppose that $[\omega]$ has infinite order. If $[\omega]$ is
pseudo-Anosov, then it has exactly two fixed points in $\pml$, namely
the projective classes of its stable and unstable measured laminations.
Since $\PThursM^x$ has no atoms, its fixed-point set is null.

Suppose that $[\omega]$ is reducible. Choose $n\ge1$ such that
$[\varphi]=[\omega]^n$ fixes an essential reduction class $\alpha$ (cf. \cite[\S2]{BirmanLubotzkyMcCarthy1983}).
Since $[\omega]$ has infinite order, $[\varphi]$ still acts nontrivially
on $\teich_{g,m}$. Moreover,
\[
\operatorname{Fix}([\omega])
\subset \operatorname{Fix}([\varphi]).
\]

Let $[\lambda]\in\operatorname{Fix}([\varphi])$, and write
\[
[\varphi](\lambda)=c\lambda
\]
for some $c>0$. Since $[\varphi](\alpha)=\alpha$, invariance of the
intersection number gives
\[
i(\alpha,\lambda)
=
i([\varphi](\alpha),[\varphi](\lambda))
=
c\,i(\alpha,\lambda).
\]
Consequently, either $i(\alpha,\lambda)=0$ or $c=1$. It follows that
\[
\operatorname{Fix}([\omega])
\subset
\{[\lambda]\in\pml\mid i(\alpha,\lambda)=0\}
\cup
\pml\bigl(x_0,[\varphi]^{-1}(x_0)\bigr),
\]
where $x_0$ is chosen so that $[\varphi](x_0)\ne x_0$. Compare \cite[Corollary~2.17]{Ivanov1992}.

The first set is null because the set of filling measured laminations
has full Thurston measure. The second set is null by
\Cref{thm:level_set}. Hence $\operatorname{Fix}([\omega])$ is
$\PThursM^x$-null for every $x\in\teich_{g,m}$.
\end{proof}

\subsection{Proof of \Cref{thm:Hopf_decomposition_main}}
\label{subsec:ProofOfDecomposition}
We begin by recalling the relevant terminology from \cite{Kaimanovich2010}.
Let $\mathscr{C}$ and $\mathscr{D}$ denote the unions of the
ergodic components whose conditional probability measures are
purely nonatomic and purely atomic, respectively. Following Kaimanovich \cite{Kaimanovich2010}, we call $\mathscr{C}$ and $\mathscr{D}$ the \emph{continual} and \emph{discontinual} parts of the action of $G$.
Set
$\mathscr{D}_{\infty}=\{[\lambda]\in\mathscr{D}:\#\operatorname{Stab}_G([\lambda])=\infty\}$,
\[
\mathtt{cons}_{\infty}
=
\mathscr{C}\cup\mathscr{D}_{\infty}.
\]
Also, let
$A_0=
\{[\lambda]\in\pml:
\operatorname{Stab}_G([\lambda])\ne\{\mathrm{id}\}\}$.
By \cite[Theorem~14 and Corollary~15]{Kaimanovich2010},
$\mathtt{cons}_{\infty}$ is the infinitely conservative part,
and $\mathtt{cons}_{\infty}\cup A_0$ is the conservative part
in Kaimanovich's terminology, modulo null sets.

Fix $x\in\teich_{g,m}$, and suppose first that $G$ is infinite.
By \eqref{eq:Radon-Nikodym2} and
\cite[Theorem~29]{Kaimanovich2010},
$\conservativepart(G)$ agrees with
$\mathtt{cons}_{\infty}$ modulo null sets.
By \Cref{lem:Fix_null} and the countability of $G$, the set
$A_0$ is null. Hence the action on $\conservativepart(G)$
is conservative in our terminology, and the action on its
complement $\dissipativepart(G)$ is dissipative.
The assertion now follows from the Hopf decomposition for general nonsingular actions
\cite[Theorem~14 and Corollary~15]{Kaimanovich2010}.

If $G$ is finite, the defining series converges everywhere.
Thus $\conservativepart(G)=\varnothing$ and $\dissipativepart(G)=\pml$.
By essential freeness, we may restrict the action to a $G$-invariant conull Borel subset on which it is free. A free action of a finite group on a standard Borel space admits a Borel fundamental domain. Hence the action is totally dissipative.
\qed

\begin{remark}
The proof of \Cref{thm:Hopf_decomposition_main} gives the following description in terms of ergodic components of the action of a nontrivial subgroup $G$ of $\tmod(g,m)$:
\begin{align*}
(\text{the conservative part of the action of $G$})&=\mathscr{C} \\
(\text{the dissipative part of the action of $G$})&= \mathscr{D}_1=\mathscr{D}\setminus A_0
\end{align*}
modulo $\PThursM^x$-null sets. Indeed, since the action of $\tmod(g,m)$ is essentially free (cf. \Cref{lem:Fix_null}), $\mathscr{D}_\infty$ and $A_0$ are null. Compare \cite[Corollary 16]{Kaimanovich2010}.
\end{remark}

\subsection{The conservative part}
\label{subsec:Proof_of_ConservativePart_BigHorospherical}
We prove the first assertion of \Cref{thm:main_classfication_limit_sets}.

If $G$ is finite, then both $\conservativepart(G)$ and
$\Lambda_H(G)$ are empty. We may therefore assume that $G$ is infinite.

Fix $x\in\teich_{g,m}$. 
By \cite[Theorem~29]{Kaimanovich2010}, the infinitely conservative part $\mathtt{cons}_\infty$ agrees modulo null sets with
\[
\left\{
[\lambda]\in\pml
\mathrel{}\middle|\mathrel{}
\exists a>0\text{ s.t. }
\#\left\{[\omega]\in G:\frac{d([\omega]_*\PThursM^x)}
     {d\PThursM^x}([\lambda])>a\right\}=\infty
\right\}.
\]
For $[\lambda]\in\pml^{ue}$, equations \eqref{eq:Radon-Nikodym2} and \eqref{eq:busemann_ue} show that this condition is equivalent to the existence of $b\in\mathbb{R}$ such that
\[
\#\left\{
[\omega]\in G
\mathrel{}\middle|\mathrel{}
\busemann^x_\lambda([\omega](x))<b
\right\}
=\infty.
\]
Since $\operatorname{Stab}_G(x)$ is finite, this is equivalent to the
condition that some horoball centered at $[\lambda]$ contains infinitely
many points of the orbit $G(x)$. Thus it is precisely the condition
$[\lambda]\in\Lambda_H(G)$.

The set $\pml^{ue}$ has full $\PThursM^x$-measure. Moreover, by
\Cref{lem:Fix_null}, the action of $G$ on $\pml$ is essentially free.
Consequently, its conservative and infinitely conservative parts agree
modulo null sets. It follows that
\[
\conservativepart(G)=\Lambda_H(G)
\]
modulo $\PThursM^x$-null sets.
\qed

\subsection{Ideal boundary of the Dirichlet polyhedron}
\label{subsec:DirichletDomain_Ideal}
In \cite{McCarthyPapadopoulos1996}, McCarthy and Papadopoulos construct fundamental domains for subgroups of the mapping class group that are analogous to Dirichlet polyhedra for hyperbolic manifolds (see, e.g., \cite{Beardon1995}).

Let $G\le \tmod(g,m)$, and let $x_0\in\teich_{g,m}$ satisfy $\operatorname{Stab}_G(x_0)=\{\mathrm{id}\}$. The \emph{Dirichlet polyhedron} $\Dirichlet_G(x_0)$ for $G$, centered at $x_0$, is defined by
\[
\Dirichlet_G(x_0)
=\left\{x\in\teich_{g,m}\mathrel{}\middle|\mathrel{}
d_T(x,x_0)<d_T(x,[\omega](x_0))
\text{ for every }[\omega]\in G\setminus\{\mathrm{id}\}\right\}.
\]
McCarthy and Papadopoulos prove the following:
\begin{itemize}
\item The closure in $\teich_{g,m}$ of the Dirichlet polyhedron satisfies
\[
\overline{\Dirichlet_G(x_0)}
=\left\{x\in\teich_{g,m}\mathrel{}\middle|\mathrel{}
d_T(x,x_0)\le d_T(x,[\omega](x_0))
\text{ for every }[\omega]\in G\right\},
\]
and $x_0$ is an interior point of this closure (\cite[Propositions 5.1 and 5.2]{McCarthyPapadopoulos1996}).
\item The Dirichlet polyhedron $\Dirichlet_G(x_0)$ and its closure are starlike with respect to $x_0$ (\cite[Proposition 5.3]{McCarthyPapadopoulos1996}).
\item As in the case of hyperbolic spaces (see \cite[Chapter 9]{Beardon1995}), $\Dirichlet_G(x_0)$ is a locally finite fundamental domain in $\teich_{g,m}$ (\cite[Theorem 5.9]{McCarthyPapadopoulos1996}).
\item The boundary $\partial \Dirichlet_G(x_0)$ is a properly embedded, locally flat submanifold of codimension one in $\teich_{g,m}$, and every boundary point is contained in at least one hypersurface of the form
\[
L_{[\omega]}(x_0)
=\{x\in \teich_{g,m}\mid d_T(x,x_0)=d_T(x,[\omega](x_0))\}
\]
for some $[\omega]\in G\setminus\{\mathrm{id}\}$ (\cite[Propositions 5.2 and 5.5]{McCarthyPapadopoulos1996}).
\end{itemize}
Let $\Dirichlet^\infty_G(x_0)$ denote the intersection of the closure of $\Dirichlet_G(x_0)$ in the Thurston compactification with $\pml$.

\begin{lemma}
\label{lem:Dirichlet_geodesic_horoball}
Let $[\lambda]\in\pml^{ue}$, and consider the following conditions:
\begin{itemize}
\item[(a)] The Teichm\"uller ray $\ray^{\lambda}_{x_0}$ is contained in $\Dirichlet_G(x_0)$.
\item[(b)] $[\lambda]\in \Dirichlet^\infty_G(x_0)$.
\item[(c)] The $0$-horoball $\horos([\lambda];x_0)$ is disjoint from the $G$-orbit $G(x_0)$ of $x_0$.
\end{itemize}
Condition ${\rm (a)}$ implies ${\rm (c)}$, and conditions ${\rm (a)}$ and ${\rm (b)}$ are equivalent. Moreover, if $\overline{\horos([\lambda];x_0)}\cap G(x_0)=\{x_0\}$, then ${\rm (a)}$ holds.
\end{lemma}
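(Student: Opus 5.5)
The plan is to exploit the comparison between distances along the ray and the Busemann function. For every $x\in\teich_{g,m}$, the function $t\mapsto d_T(x,\ray^\lambda_{x_0}(t))-t$ is nonincreasing by the triangle inequality, and it converges to $\busemann^{x_0}_\lambda(x)$.

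\textbf{(a) $\Rightarrow$ (c).} Assume the ray lies in $\Dirichlet_G(x_0)$. Then for every $[\omega]\ne\mathrm{id}$ and every $t\ge0$,
\[
t=d_T(\ray^\lambda_{x_0}(t),x_0)<d_T(\ray^\lambda_{x_0}(t),[\omega](x_0)).
\]
Subtracting $t$ and letting $t\to\infty$ gives $\busemann^{x_0}_\lambda([\omega](x_0))\ge0$. Since also $\busemann^{x_0}_\lambda(x_0)=0$, no orbit point lies in the open $0$-horoball, which is (c).

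\textbf{The converse under the hypothesis $\overline{\horos([\lambda];x_0)}\cap G(x_0)=\{x_0\}$.} Here $\busemann^{x_0}_\lambda([\omega](x_0))>0$ for every $[\omega]\ne\mathrm{id}$. Fix such an $[\omega]$. The quantity
\[
f_\omega(t)=d_T(\ray^\lambda_{x_0}(t),[\omega](x_0))-t
\]
is nonincreasing in $t$ and tends to $\busemann^{x_0}_\lambda([\omega](x_0))>0$. Hence $f_\omega(t)>0$ for all $t$, which gives the strict inequality required in the definition of $\Dirichlet_G(x_0)$ along the whole ray. The subtle point is that this inequality must hold for all $[\omega]$ simultaneously, but it already holds pointwise for each $[\omega]$ and each $t$, so no uniformity is needed. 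Therefore (a) holds.

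\textbf{(a) $\Rightarrow$ (b).} This is immediate: the ray converges to $[\lambda]$ in the Thurston compactification (standard for uniquely ergodic directions, by Masur), and the ray lies in $\Dirichlet_G(x_0)$.

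\textbf{(b) $\Rightarrow$ (a).} This is the main obstacle. Choose $x_n\in\Dirichlet_G(x_0)$ with $x_n\to[\lambda]$. Write $x_n=\ray^{\lambda_n}_{x_0}(t_n)$ with $t_n\to\infty$. By starlikeness, the whole segment $\ray^{\lambda_n}_{x_0}([0,t_n])$ lies in $\overline{\Dirichlet_G(x_0)}$.

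I would first show $[\lambda_n]\to[\lambda]$. Pass to a subsequence with $[\lambda_n]\to[\mu]$. By the continuity results on rays (Kerckhoff/Masur), $\ray^{\lambda_n}_{x_0}(t_n)$ then accumulates on points $[\nu]$ with $i(\nu,\mu)=0$. Since $\lambda$ is uniquely ergodic and the limit is $[\lambda]$, this forces $[\mu]=[\lambda]$.

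Next, for each fixed $t$, $\ray^{\lambda_n}_{x_0}(t)\to\ray^\lambda_{x_0}(t)$, so $\ray^\lambda_{x_0}(t)\in\overline{\Dirichlet_G(x_0)}$. This already yields
\[
\busemann^{x_0}_\lambda([\omega](x_0))\ge0\quad\text{for all }[\omega].
\]
To upgrade closure membership to membership in the open polyhedron, I would use the following comparison. If $\ray^\lambda_{x_0}(s)$ lies on a bisector $L_{[\omega]}(x_0)$, then the triangle inequality gives, for $t>s$,
\[
d_T(\ray^\lambda_{x_0}(t),[\omega](x_0))\le (t-s)+s=t.
\]
Together with the closure condition, this forces equality for all $t\ge s$. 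Thus the ray beyond $s$ stays on $L_{[\omega]}(x_0)$, and the concatenation of the segment from $[\omega](x_0)$ to $\ray^\lambda_{x_0}(s)$ with the ray is geodesic. By uniqueness of Teichm\"uller geodesics through a point with given endpoint behaviour (the ray from $[\omega](x_0)$ would be a Teichm\"uller geodesic containing the tail), the tail of the ray is part of a geodesic through $[\omega](x_0)$. Extending backward would then force $[\omega](x_0)$ onto the ray's backward extension at distance $s$ before $x_0$, in a way incompatible with $s>0$ and $d_T(x_0,\ray^\lambda_{x_0}(s))=s$. This yields a contradiction and gives strictness.

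The delicate step is this last rigidity argument. Teichm\"uller geodesics are unique between two points, so a geodesic through $[\omega](x_0)$ and $\ray^\lambda_{x_0}(t)$ for all large $t$ would share the segment near $\ray^\lambda_{x_0}(s)$. I expect this to pin down $[\omega](x_0)=x_0$, contradicting $\operatorname{Stab}_G(x_0)=\{\mathrm{id}\}$.
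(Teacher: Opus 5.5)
Your argument follows the paper's proof closely. The steps (a)$\Rightarrow$(c), the additional assertion, and (a)$\Rightarrow$(b) are the same. In (b)$\Rightarrow$(a) you use the same ingredients: the rays $\ray^{\lambda_n}_{x_0}$ through $x_n$, $[\lambda_n]\to[\lambda]$, convergence of rays on compacta, and starlikeness. The real difference is in how strictness past a bisector is obtained. The paper quotes McCarthy--Papadopoulos's Lemma~4.5 for the strict inequality $d_T(x_0,\ray^\lambda_{x_0}(t_0+1))>d_T([\omega](x_0),\ray^\lambda_{x_0}(t_0+1))$ and then contradicts starlikeness for large $n$. You instead place the ray in $\overline{\Dirichlet_G(x_0)}$ and prove strictness inline.

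As written, that inline step has a gap. Uniqueness of geodesics between two points does not by itself force $[\omega](x_0)=x_0$. A tripod tree is uniquely geodesic, yet geodesics from two distinct leaves merge at the center, and the ray from one leaf through the center stays on the bisector forever. That is exactly the configuration you must rule out.

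What you need is non-branching of Teichm\"uller geodesics. By Teichm\"uller's uniqueness theorem, the extremal map from $\ray^\lambda_{x_0}(s)$ to $\ray^\lambda_{x_0}(t)$ has a unique initial unit-norm quadratic differential $q_s$. Hence both $\ray^\lambda_{x_0}|_{[0,t]}$ and your concatenated geodesic from $[\omega](x_0)$ reach $\ray^\lambda_{x_0}(s)$ with terminal differential $q_s$. The Teichm\"uller deformation of $\ray^\lambda_{x_0}(s)$ along $-q_s$ for time $s$ therefore equals both $x_0$ and $[\omega](x_0)$. This contradicts $\operatorname{Stab}_G(x_0)=\{\mathrm{id}\}$. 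Note that it lands at $x_0$ itself, not ``at distance $s$ before $x_0$'' as you wrote.

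A smaller point: you assert $[\lambda_n]\to[\lambda]$ with only a vague citation. The claim is true, and the paper proves it directly. Normalize $\ext_{x_0}(\lambda_n)=1$, so that $e^{-2t_n}\ext_{x_n}(\lambda_n)=e^{-4t_n}\to 0$. Since $x_n\to[\lambda]$ also in the Gardiner--Masur compactification, $e^{-2t_n}\ext_{x_n}(\cdot)\to i(\cdot,\lambda)^2/\ext_{x_0}(\lambda)$ uniformly on compact subsets of $\ml$. Hence any limit $\lambda_0$ of the $\lambda_n$ satisfies $i(\lambda_0,\lambda)=0$, and unique ergodicity gives $[\lambda_0]=[\lambda]$. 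With these two points supplied, your proof is complete.
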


\begin{proof}
\noindent
\textbf{${\rm (a)}\Rightarrow {\rm (c)}$.}
Let $[\omega]\in G$. Since $\ray^\lambda_{x_0}(t)\in \Dirichlet_G(x_0)$ for every $t\ge 0$,
\[
d_T(\ray^\lambda_{x_0}(t),[\omega](x_0))
\ge d_T(\ray^\lambda_{x_0}(t),x_0)=t,
\]
and hence
\[
\busemann^{x_0}_\lambda([\omega](x_0))
=\lim_{t\to\infty}
\bigl(d_T(\ray^\lambda_{x_0}(t),[\omega](x_0))-t\bigr)
\ge 0.
\]
Thus, $[\omega](x_0)$ lies outside the $0$-horoball.

We next prove the additional assertion. Assume that $x_0$ is the only point of the orbit contained in the closure of the $0$-horoball, and let $[\omega]\in G\setminus\{\mathrm{id}\}$. By assumption,
$\busemann^{x_0}_\lambda([\omega](x_0))>0$. The function
$t\mapsto d_T([\omega](x_0),\ray^\lambda_{x_0}(t))-t$ is nonincreasing and converges to $\busemann^{x_0}_\lambda([\omega](x_0))$. Consequently, for every $t\ge0$,
\[
d_T([\omega](x_0),\ray^\lambda_{x_0}(t))-t
\ge\busemann^{x_0}_\lambda([\omega](x_0))>0.
\]
Since this holds for every $[\omega]\in G\setminus\{\mathrm{id}\}$, we have $\ray^\lambda_{x_0}(t)\in\Dirichlet_G(x_0)$ for every $t\ge0$. Thus, ${\rm (a)}$ holds.

\medskip
\noindent
\textbf{${\rm (b)}\Rightarrow {\rm (a)}$.}
Since $[\lambda]\in \Dirichlet^\infty_G(x_0)$, there exists a sequence $\{x_n\}_{n=1}^\infty\subset \Dirichlet_G(x_0)$ such that $x_n\to[\lambda]$ in the Thurston compactification. Set $t_n=d_T(x_0,x_n)$. Then $x_n$ also converges to $[\lambda]$ in the Gardiner--Masur compactification, and
\[
e^{-2t_n}\ext_{x_n}(\cdot)
\longrightarrow \frac{i(\cdot,\lambda)^2}{\ext_{x_0}(\lambda)}
\]
uniformly on compact subsets of $\ml$ (cf.\ \cite[Corollaries 1 and 2]{Miyachi2013}).

For each $n\in\mathbb{N}$, choose $[\lambda_n]\in\pml$ such that
$\ray^{\lambda_n}_{x_0}(t_n)=x_n$. Choose representatives $\lambda_n$ normalized by $\ext_{x_0}(\lambda_n)=1$. Then $t_n\to\infty$, and, after passing to a subsequence, we may assume that $\lambda_n$ converges to some $\lambda_0\in\ml$. Moreover,
\[
e^{-2t_n}\ext_{x_n}(\lambda_n)
=e^{-4t_n}\ext_{x_0}(\lambda_n)\longrightarrow 0.
\]
Hence $i(\lambda_0,\lambda)=0$, and thus $[\lambda_0]=[\lambda]$. By Teichm\"uller's theorem, the geodesic rays $\ray^{\lambda_n}_{x_0}$ converge uniformly on compact subsets of $[0,\infty)$ to $\ray^\lambda_{x_0}$.

Suppose, for contradiction, that the ray is not contained in $\Dirichlet_G(x_0)$. Since $x_0\in\Dirichlet_G(x_0)$ and the ray is continuous, there exists $t_0>0$ such that $\ray^\lambda_{x_0}(t_0)\in\partial\Dirichlet_G(x_0)$. Then there exists $[\omega]\in G\setminus\{\mathrm{id}\}$ such that
\[
d_T(\ray^\lambda_{x_0}(t_0),x_0)
=d_T(\ray^\lambda_{x_0}(t_0),[\omega](x_0)).
\]
By \cite[Lemma 4.5]{McCarthyPapadopoulos1996},
\[
d_T(x_0,\ray^\lambda_{x_0}(t_0+1))
>d_T([\omega](x_0),\ray^\lambda_{x_0}(t_0+1)).
\]
Therefore, for all sufficiently large $n$, we have $t_0+1<t_n$ and
$\ray^{\lambda_n}_{x_0}(t_0+1)\notin\Dirichlet_G(x_0)$. This contradicts the starlikeness of $\Dirichlet_G(x_0)$ with respect to $x_0$, because $x_n\in\Dirichlet_G(x_0)$.

\medskip
\noindent
\textbf{${\rm (a)}\Rightarrow {\rm (b)}$.}
Since $\lambda$ is uniquely ergodic, $\ray^\lambda_{x_0}(t)\to[\lambda]$ in the Thurston compactification (cf.\ \cite{Masur1980}). By ${\rm (a)}$, every point of this ray lies in $\Dirichlet_G(x_0)$; hence $[\lambda]\in\Dirichlet^\infty_G(x_0)$.
\end{proof}

\begin{lemma}[Dirichlet points]
\label{lem:almost-unique}
Assume that $[\omega](x_0)\ne x_0$ for every $[\omega]\in G\setminus\{\mathrm{id}\}$.
Then $\Dirichlet^\infty_G(x_0)$ is a wandering set for the action of $G$. Moreover, for $\PThursM^{x_0}$-almost every $[\lambda]\in\Dirichlet^\infty_G(x_0)$, we have $\overline{\horos([\lambda];x_0)}\cap G(x_0)=\{x_0\}$.
\end{lemma}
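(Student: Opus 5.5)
The plan is to work on the uniquely ergodic locus and remove a countable union of level loci using \Cref{thm:level_set}. Define the exceptional set
\[
N=(\pml\setminus\pml^{ue})\cup\bigcup_{[\omega]\ne[\eta]\in G}\pml\bigl([\omega](x_0),[\eta](x_0)\bigr).
\]
Since $G$ is countable and $[\omega](x_0)\ne[\eta](x_0)$ whenever $[\omega]\ne[\eta]$ (by the trivial stabilizer hypothesis), \Cref{thm:level_set} and the full measure of $\pml^{ue}$ show that $N$ is $\PThursM^{x_0}$-null.

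\textbf{Second assertion.} Take $[\lambda]\in\Dirichlet^\infty_G(x_0)\setminus N$. By \Cref{lem:Dirichlet_geodesic_horoball}, (b) implies (a), and (a) implies (c). So $\busemann^{x_0}_\lambda([\omega](x_0))\ge0$ for all $[\omega]\in G$. By \eqref{eq:busemann_ue} this reads $\ext_{[\omega](x_0)}(\lambda)\ge\ext_{x_0}(\lambda)$. Since $[\lambda]\notin\pml(x_0,[\omega](x_0))$ for $[\omega]\ne\mathrm{id}$, the inequality is strict. Hence $\busemann^{x_0}_\lambda([\omega](x_0))>0$ for $[\omega]\ne\mathrm{id}$, which is exactly $\overline{\horos([\lambda];x_0)}\cap G(x_0)=\{x_0\}$.

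\textbf{Wandering.} Let $[\eta]\in G\setminus\{\mathrm{id}\}$ and suppose $[\lambda]\in\Dirichlet^\infty_G(x_0)\cap[\eta](\Dirichlet^\infty_G(x_0))$. From the previous paragraph, $\ext_{x_0}(\lambda)\le\ext_{[\omega](x_0)}(\lambda)$ for all $[\omega]\in G$.

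Now $[\eta]^{-1}([\lambda])\in\Dirichlet^\infty_G(x_0)$, and $\pml^{ue}$ is $\tmod$-invariant, so the same inequality applies to $[\eta]^{-1}(\lambda)$. Using the equivariance $\ext_{[\omega](x_0)}([\eta]^{-1}\lambda)=\ext_{[\eta][\omega](x_0)}(\lambda)$, this gives $\ext_{[\eta](x_0)}(\lambda)\le\ext_{[\eta][\omega](x_0)}(\lambda)$ for all $[\omega]$. Taking $[\omega]=[\eta]^{-1}$ yields $\ext_{[\eta](x_0)}(\lambda)\le\ext_{x_0}(\lambda)$. Combined with the reverse inequality, we get $[\lambda]\in\pml(x_0,[\eta](x_0))\subset N$.

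Thus $\Dirichlet^\infty_G(x_0)\cap[\eta](\Dirichlet^\infty_G(x_0))$ is null for each $[\eta]\ne\mathrm{id}$, so $\Dirichlet^\infty_G(x_0)$ is wandering. One also needs measurability of $\Dirichlet^\infty_G(x_0)$. This holds because it is a closed subset of $\pml$: it is the intersection of a closure with the closed set $\pml$.

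\textbf{Main obstacle.} The delicate point is that the inequality from (c) is non-strict, and the conclusion of \Cref{lem:Dirichlet_geodesic_horoball} passes only through the uniquely ergodic formula for the Busemann function. Both issues are handled by excluding $N$, which is why I would verify carefully that \eqref{eq:busemann_ue} is used only for $[\lambda]\in\pml^{ue}$ and its $G$-images.
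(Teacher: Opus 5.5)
Your proposal is correct and follows essentially the same argument as the paper: \Cref{lem:Dirichlet_geodesic_horoball} gives $\busemann^{x_0}_\lambda([\omega](x_0))\ge 0$ on the uniquely ergodic part of $\Dirichlet^\infty_G(x_0)$, and every equality case is pushed into a level locus that is null by \Cref{thm:level_set} and the countability of $G$. The only difference is cosmetic: for wandering you pull back by $[\eta]^{-1}$ using equivariance of extremal length, whereas the paper uses $[\omega](\Dirichlet^\infty_G(x_0))=\Dirichlet^\infty_G([\omega](x_0))$ together with the cocycle condition \eqref{eq:busemann_cocycle_condition}; your remark that $\Dirichlet^\infty_G(x_0)$ is closed, hence measurable, makes explicit a point the paper leaves implicit.
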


\begin{proof}
Fix $[\omega]\in G\setminus\{\mathrm{id}\}$ and let $[\lambda]\in \Dirichlet^\infty_G(x_0)\cap [\omega](\Dirichlet^\infty_G(x_0))\cap \pml^{ue}$.
Since $[\omega](\Dirichlet^\infty_G(x_0))=
\Dirichlet^\infty_G([\omega](x_0))$, \Cref{lem:Dirichlet_geodesic_horoball} gives
\[
\busemann^{x_0}_\lambda([\omega](x_0))\ge0,
\qquad
\busemann^{[\omega](x_0)}_\lambda(x_0)\ge0.
\]
From the cocycle condition \eqref{eq:busemann_cocycle_condition},
\[
0=\busemann^{x_0}_\lambda(x_0)=\busemann^{x_0}_\lambda([\omega](x_0))+\busemann^{[\omega](x_0)}_\lambda(x_0),
\]
we have $\busemann^{x_0}_\lambda(x_0)=\busemann^{x_0}_\lambda([\omega](x_0))=0$.
Therefore,
\begin{align*}
\frac{\ext_{[\omega](x_0)}(\lambda)}{\ext_{x_0}(\lambda)}
&=\exp\left(2\busemann^{x_0}_\lambda([\omega](x_0))\right)=\exp\left(2\busemann^{x_0}_\lambda(x_0)\right)=1.
\end{align*}
Thus,
\[
\Dirichlet^\infty_G(x_0)\cap
[\omega](\Dirichlet^\infty_G(x_0))\cap\pml^{ue}
\subset\pml(x_0,[\omega](x_0)).
\]
The right-hand side is null by \Cref{thm:level_set}, and
$\pml\setminus\pml^{ue}$ is null. Hence
$\Dirichlet^\infty_G(x_0)$ is wandering.

Let $[\lambda]\in\Dirichlet^\infty_G(x_0)\cap\pml^{ue}$.
Suppose that $[\omega](x_0)\in\overline{\horos([\lambda];x_0)}$ for some $[\omega]\in G\setminus\{\mathrm{id}\}$. 
By \Cref{lem:Dirichlet_geodesic_horoball},
$\busemann^{x_0}_\lambda([\omega](x_0))\ge0$, whereas membership in the
closed horoball gives the opposite inequality. Hence this value is zero,
and \eqref{eq:busemann_ue} yields
$[\lambda]\in\pml(x_0,[\omega](x_0))$. 
Since $G$ is countable, 
we have $\overline{\horos([\lambda];x_0)}\cap G(x_0)=\{x_0\}$ for $\PThursM^{x_0}$-almost every $[\lambda]\in \Dirichlet^{\infty}_G(x_0)$.
\end{proof}

\begin{lemma}[Dirichlet sets and Dirichlet polyhedra]
\label{lem:DirichletSets_Polyhedrons}
Let $G$ be a nontrivial subgroup of the Teichm\"uller modular group, and let
$x_0\in \teich_{g,m}$ satisfy $\operatorname{Stab}_G(x_0)=\{\mathrm{id}\}$.
Then the symmetric difference
\begin{equation}
\label{eq:symmetric_differenice_Dirichletset_Dirichlet_polyhedron}
\operatorname{Dir}_G(x_0)\bigtriangleup \bigcup_{[\omega]\in G}[\omega](\Dirichlet^\infty_G(x_0))
\end{equation}
is $\PThursM^{x_0}$-null.
\end{lemma}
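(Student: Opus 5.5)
We intersect everything with the conull $G$-invariant set $\pml^{ue}$ and discard, for each $[\omega]\in G\setminus\{\mathrm{id}\}$ and each pair $[\omega_1]\ne[\omega_2]$ in $G$, the null level loci $\pml([\omega_1](x_0),[\omega_2](x_0))$ of \Cref{thm:level_set}. Since $\operatorname{Stab}_G(x_0)$ is trivial, these pairs give distinct points of $\teich_{g,m}$, and there are countably many of them. Call the remaining conull set $E$. By \eqref{eq:busemann_ue}, on $E$ the function $[\omega]\mapsto \ext_{[\omega](x_0)}(\lambda)$ is injective on $G$. The plan is to prove both inclusions on $E$, up to further null sets.

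\textbf{Inclusion $\bigcup_{[\omega]}[\omega](\Dirichlet^\infty_G(x_0))\subset \operatorname{Dir}_G(x_0)$ a.e.} Since $\operatorname{Dir}_G(x_0)$ is $G$-invariant, it suffices to treat $\Dirichlet^\infty_G(x_0)$ itself; $G$-invariance follows by reindexing the infimum in \eqref{eq:horo_definition4} together with the cocycle relation \eqref{eq:busemann_cocycle_condition}, which shifts the infimum by the positive factor $\ext_{[\eta]^{-1}(x_0)}/\ext_{x_0}$, so attainment and positivity are preserved. Countable unions of null sets remain null, because the action is nonsingular (\Cref{lem:quasi-invariance}). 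For $[\lambda]\in\Dirichlet^\infty_G(x_0)\cap\pml^{ue}$, \Cref{lem:Dirichlet_geodesic_horoball} ((b)$\Rightarrow$(a)$\Rightarrow$(c)) gives $\busemann^{x_0}_\lambda([\omega](x_0))\ge0$ for all $[\omega]\in G$. Hence $\horo^{x_0}_G([\lambda])=1>0$, attained at $\mathrm{id}$, so $[\lambda]$ is a Dirichlet point. This inclusion needs no null-set exclusion at all.

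\textbf{Inclusion $\operatorname{Dir}_G(x_0)\cap E\subset \bigcup[\omega](\Dirichlet^\infty_G(x_0))$.} Let $[\lambda]\in\operatorname{Dir}_G(x_0)\cap E$. The infimum is attained at some $[\eta]\in G$, and by the injectivity on $E$ this minimizer is unique. Thus $\ext_{[\omega](x_0)}(\lambda)>\ext_{[\eta](x_0)}(\lambda)$ for all $[\omega]\ne[\eta]$. Put $[\lambda']=[\eta]^{-1}([\lambda])$. By equivariance of extremal length, $\ext_{[\omega](x_0)}(\lambda')>\ext_{x_0}(\lambda')$ for all $[\omega]\ne\mathrm{id}$, i.e. $\busemann^{x_0}_{\lambda'}([\omega](x_0))>0$. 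Hence $\overline{\horos([\lambda'];x_0)}\cap G(x_0)=\{x_0\}$. Unique ergodicity is preserved by the mapping class group, so $[\lambda']\in\pml^{ue}$. The final assertion of \Cref{lem:Dirichlet_geodesic_horoball} then gives (a), and the equivalence (a)$\Leftrightarrow$(b) gives $[\lambda']\in\Dirichlet^\infty_G(x_0)$. Therefore $[\lambda]\in[\eta](\Dirichlet^\infty_G(x_0))$.

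\textbf{Main obstacle.} The delicate point is strict positivity: the lemma's final clause needs the closed horoball to meet the orbit only at $x_0$. This is exactly why the unique minimizer must be secured by removing the level loci via \Cref{thm:level_set}, using the trivial stabilizer so that the relevant orbit points are distinct. Once $E$ is fixed, both inclusions are pointwise, and the symmetric difference is contained in $\pml\setminus E$, which is $\PThursM^{x_0}$-null.
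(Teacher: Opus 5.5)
Your proposal is correct and follows essentially the same route as the paper. You discard $\pml\setminus\pml^{ue}$ and the countably many level loci $\pml([\omega_1](x_0),[\omega_2](x_0))$ (distinct points by the trivial stabilizer), then use the unique minimizer, translated back to $x_0$, together with the final clause of \Cref{lem:Dirichlet_geodesic_horoball} for one inclusion, and equivariance for the other. The only minor difference is in the inclusion of the translates of $\Dirichlet^\infty_G(x_0)$ into $\operatorname{Dir}_G(x_0)$: you apply (b)$\Rightarrow$(a)$\Rightarrow$(c) directly to get $\horo^{x_0}_G([\lambda])=1$, attained at $x_0$, instead of passing through the strict-inequality subset and \Cref{lem:almost-unique}, which is slightly more direct.
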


\begin{proof}
Let $E\subset\Dirichlet^\infty_G(x_0)$ consist of the uniquely ergodic points $[\lambda]$ satisfying
\[
\overline{\horos([\lambda];x_0)}\cap G(x_0)=\{x_0\}.
\]
From the proof of \Cref{lem:almost-unique}, $[\omega](E)\cap E=\emptyset$ for every $[\omega]\in G\setminus\{\mathrm{id}\}$ and $\Dirichlet^\infty_G(x_0)\setminus E$ is $\PThursM^{x_0}$-null.
By definition, $E\subset \operatorname{Dir}_G(x_0)$. Moreover,
\[
\operatorname{Dir}_G(x_0)
\setminus\bigcup_{[\omega]\in G}[\omega](E)
\subset
(\pml\setminus\pml^{ue})
\cup
\bigcup_{\substack{[\omega_1],[\omega_2]\in G\\
[\omega_1]\ne[\omega_2]}}
\pml([\omega_1](x_0),[\omega_2](x_0)).
\]
Indeed, outside the exceptional set on the right, a Dirichlet
point is uniquely ergodic and has a unique minimizing orbit
point. Applying \Cref{lem:Dirichlet_geodesic_horoball} at
that orbit point and translating back to $x_0$, we see that
it belongs to $[\omega](E)$ for some $[\omega]\in G$.
The exceptional set is null by \Cref{thm:level_set} and countability
of $G$; the triviality of $\operatorname{Stab}_G(x_0)$ ensures that
$[\omega_1](x_0)\ne[\omega_2](x_0)$ whenever
$[\omega_1]\ne[\omega_2]$.
Conversely, every point of $[\omega](E)$ is uniquely ergodic and has
$[\omega](x_0)$ as its unique minimizing orbit point, by equivariance
of extremal length. Thus
\[
\bigcup_{[\omega]\in G}[\omega](E)\subset\operatorname{Dir}_G(x_0).
\]
Finally, replacing $E$ by $\Dirichlet^\infty_G(x_0)$ changes this
union only by a null set, because the action is nonsingular and $G$
is countable. This proves the assertion.
\end{proof}

\subsection{Proof of \Cref{thm:Dissipative_part}}
\label{subsec:Proof-Dissipative_part}
Let $\dissipativepart^0_{x_0}(G)$ be the subset of $\dissipativepart_{x_0}(G)\cap\pml^{ue}$ consisting of the points for which the infimum defining $\horo^{x_0}_G([\lambda])$ is attained at a unique point of the orbit $G(x_0)$.

We first show that $\dissipativepart^0_{x_0}(G)$ is conull in $\dissipativepart_{x_0}(G)$. Indeed, if $[\lambda]\in\dissipativepart_{x_0}(G)\cap\pml^{ue}$, then the defining series converges. A summable family of positive terms attains its supremum; otherwise, infinitely many terms would be bounded below by half of that supremum. It follows that $\horo^{x_0}_G([\lambda])>0$ and that the infimum defining it is attained. If the minimum is attained at two distinct orbit points $y,z\in G(x_0)$, then $[\lambda]\in\pml(y,z)$. Since the orbit is countable, \Cref{thm:level_set}, together with the fact that $\pml^{ue}$ has full measure, shows that the set of exceptions is null.

Let $[\lambda]\in\dissipativepart^0_{x_0}(G)$. We choose $[\omega]\in G$ such that the infimum defining $\horo^{x_0}_G([\lambda])$ is attained at $[\omega](x_0)$.
Set $r_0=\busemann^{x_0}_\lambda([\omega](x_0))=\frac{1}{2}\log\horo^{x_0}_G([\lambda])$. By the uniqueness of the minimizing orbit point and \eqref{eq:horoball-basepoint2},
\[
G(x_0)\cap\overline{\horos([\lambda];[\omega](x_0))}
=G(x_0)\cap\overline{\horos([\lambda],r_0;x_0)}
=\{[\omega](x_0)\}.
\]
Applying \Cref{lem:Dirichlet_geodesic_horoball} with basepoint $[\omega](x_0)$, we obtain $[\lambda]\in\Dirichlet^\infty_G([\omega](x_0))$. Since
$\Dirichlet^\infty_G([\omega](x_0))=[\omega](\Dirichlet^\infty_G(x_0))$, it follows that
\[
\dissipativepart_{x_0}(G)
\setminus\bigcup_{[\omega]\in G}
[\omega](\Dirichlet^\infty_G(x_0))
\]
is $\PThursM^{x_0}$-null.

For the converse inclusion, let $E\subset\Dirichlet^\infty_G(x_0)$ consist of the uniquely ergodic points $[\lambda]$ satisfying
$\overline{\horos([\lambda];x_0)}\cap G(x_0)=\{x_0\}$. 
From the proof of \Cref{lem:almost-unique}, $[\omega](E)\cap E=\emptyset$ for every $[\omega]\in G\setminus\{\mathrm{id}\}$ and $\Dirichlet^\infty_G(x_0)\setminus E$ is $\PThursM^{x_0}$-null.

The set $E$ is measurable: on $\pml^{ue}$ its defining condition is
\[
\ext_{[\omega](x_0)}(\lambda)>\ext_{x_0}(\lambda)
\quad\text{for every }[\omega]\in G\setminus\{\mathrm{id}\},
\]
a countable family of measurable conditions.
We now show directly that $E\setminus\dissipativepart_{x_0}(G)$ is null.
By the construction of $E$, the sets $[\omega]^{-1}(E)$, $[\omega]\in G$, are pairwise disjoint.
Hence, by Tonelli's theorem and \eqref{eq:Radon-Nikodym2},
\begin{align*}
\int_E
\sum_{[\omega]\in G}
\left(
\frac{\ext_{x_0}(\lambda)}
{\ext_{[\omega](x_0)}(\lambda)}
\right)^{\convgenus}
d\PThursM^{x_0}([\lambda])
&=
\sum_{[\omega]\in G}
\int_E
\frac{d([\omega]_*\PThursM^{x_0})}{d\PThursM^{x_0}}([\lambda])
d\PThursM^{x_0}([\lambda])\\
&=
\sum_{[\omega]\in G}
\PThursM^{x_0}([\omega]^{-1}(E)) \\
&\le \PThursM^{x_0}(\pml)=1<\infty.
\end{align*}
Therefore, the defining series for $\dissipativepart_{x_0}(G)$ is finite for $\PThursM^{x_0}$-almost every point of $E$. 
Thus, $E\setminus\dissipativepart_{x_0}(G)$ is null. Together with \Cref{lem:almost-unique}, this shows that $\Dirichlet^\infty_G(x_0)\setminus\dissipativepart_{x_0}(G)$ is null. By \Cref{lem:Cons-Diss-ElementaryProperty},
$\dissipativepart_{x_0}(G)$ is $G$-invariant.
Since the action is nonsingular and $G$ is countable, it follows that
\[
\left(
\bigcup_{[\omega]\in G}
[\omega](\Dirichlet^\infty_G(x_0))
\right)
\setminus\dissipativepart_{x_0}(G)
\]
is also $\PThursM^{x_0}$-null. The two inclusions prove the assertion.
The nullity of the symmetric difference $\dissipativepart(G)\bigtriangleup \operatorname{Dir}_G(x_0)$ follows from \Cref{lem:DirichletSets_Polyhedrons}.

The result for $\PThursM^x$, with arbitrary
$x\in\teich_{g,m}$, follows from the mutual absolute continuity of the
measures $\{\PThursM^x\}_{x\in\teich_{g,m}}$.
This also proves the second assertion of
\Cref{thm:main_classfication_limit_sets}.
\qed

\section{Dynamics of the Torelli group}
\label{sec:dynamics_of_Torelli_group}

\subsection{Siegel upper half space}
We recall the Siegel upper half space, its realization as the Siegel disk, and the relation between their boundaries under the Cayley transform, following Friedland and Freitas \cite{FriedlandFreitas2004}.
This point of view is classical in the theory of Hermitian symmetric spaces; see, for example, Helgason \cite{Helgason2001} and Satake \cite{Satake1980}.

Let $\mat_{m,n}(\mathbb{C})$ denote the set of $m\times n$ complex matrices, and let
$\symm(g,\mathbb{C})\subset \mat_{g,g}(\mathbb{C})$ denote the set of symmetric matrices.
The \emph{Siegel upper half space} $\mathfrak{H}_g$ of degree $g$ is the open subset of
$\symm(g,\mathbb{C})$ consisting of symmetric matrices whose imaginary parts are positive definite.
The symplectic group
\[
\spl(2g,\mathbb{R})=\left\{
M\in \gl(2g,\mathbb{R})\mid M^TJ_gM=J_g\right\}
\quad
\left(
J_g=\begin{pmatrix} 0 & I_g \\ -I_g & 0\end{pmatrix}
\right)
\]
acts on $\mathfrak{H}_g$ by
\[
M(Z)=(AZ-B)(-CZ+D)^{-1}
\quad
\left(M=\begin{pmatrix} A & B \\ C & D\end{pmatrix}\right).
\]
This convention is compatible with the action of the symplectic group on the first homology group of $\Sigma_g$
(cf. \S\ref{subsec:symplectic_representation} and \eqref{eq:symplectic_rep}; see also \cite[(6.8)]{Earle1978}).

% The boundary $\partial\mathfrak{H}_g$ of the Siegel upper half spaceconsists of symmetric matrices $Z$ with
% ${\rm Im}(Z)\ge 0$ and $\rank({\rm Im}(Z))<g$.
% For $k=1,\ldots,g$, let
% \[
% \partial^{(k)}\mathfrak{H}_g
% =
% \{Z\in \partial\mathfrak{H}_g\mid \rank({\rm Im}(Z))=g-k\}.
% \]
% Then the rank decomposition $\{\partial^{(k)}\mathfrak{H}_g\}_{k=1}^{g}$ gives a stratification of
% the boundary $\partial\mathfrak{H}_g$.
% Moreover, this stratification is invariant under the action of $\spl(2g,\mathbb{R})$
% in the following sense: if
% $Z\in \partial^{(k)}\mathfrak{H}_g$ and
% $M=\begin{pmatrix} A & B \\ C & D\end{pmatrix}\in \spl(2g,\mathbb{R})$ satisfy
% $\det(-CZ+D)\ne 0$, then $M(Z)\in \partial^{(k)}\mathfrak{H}_g$.

Let
\[
\mathfrak{D}_g=\{W\in \symm(g,\mathbb{C})\mid I_g-W\overline{W}>0\}.
\]
This domain is called the \emph{Siegel disk}, or the \emph{generalized unit disk}, of degree $g$
(cf. \cite{Siegel1943}).
It is known that the \emph{Cayley transform}
\begin{equation}
\label{eq:cayray}
% \Cayley\colon\mathfrak{D}_g\ni W\mapsto \sqrt{-1}(I_g+W)(I_g-W)^{-1}\in \mathfrak{H}_g
\Cayley\colon\mathfrak{H}_g\ni Z\mapsto (Z-\sqrt{-1}I_g)(Z+\sqrt{-1}I_g)^{-1}\in \mathfrak{D}_g
\end{equation}
is biholomorphic. Via the Cayley transform, $\spl(2g,\mathbb{R})$ acts naturally on
$\mathfrak{D}_g$, and this action extends to the boundary $\partial \mathfrak{D}_g$ in
$\symm(g,\mathbb{C})$.
We use the following characterization (cf. Friedland and Freitas \cite{FriedlandFreitas2004}).

\begin{proposition}
\label{prop:F-F-lemma3.1}
A sequence $\{Z_n\}_{n=1}^\infty$ in $\mathfrak{H}_g$ converges to a point in
$\partial \mathfrak{H}_g\subset \symm(g,\mathbb{C})$ if and only if the sequence
$\{\Cayley(Z_n)\}_{n=1}^\infty$ converges to a point
$W_\infty\in \partial \mathfrak{D}_g$ with $\det(I_g-W_\infty)\ne 0$.
\end{proposition}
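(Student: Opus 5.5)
The plan is to show that the Cayley transform and its inverse extend continuously to the relevant closed sets, and then to pass to the limit in both directions using one algebraic congruence. Throughout, $\partial\mathfrak{H}_g$ means the boundary of $\mathfrak{H}_g$ in $\symm(g,\mathbb{C})$. These are the symmetric $Z$ with $\operatorname{Im}Z\ge0$ and $\operatorname{Im}Z$ singular. Likewise, $\partial\mathfrak{D}_g$ consists of the symmetric $W$ with $I_g-W\overline{W}\ge0$ and $I_g-W\overline{W}$ singular. For symmetric $W$ we have $\overline{W}=W^*$, so $I_g-W\overline{W}=I_g-W^*W$ up to the harmless replacement $W\leftrightarrow W^T$.

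\textbf{Step 1: $\Cayley$ extends to the closure of $\mathfrak{H}_g$.} Let $Z$ be symmetric with $\operatorname{Im}Z\ge0$. Then $Z+\sqrt{-1}I_g$ is invertible. Indeed, if $(Z+\sqrt{-1}I_g)v=0$, then $v^*Zv=-\sqrt{-1}|v|^2$. But the imaginary part of $v^*Zv$ equals $v^*(\operatorname{Im}Z)v\ge0$, which forces $v=0$. Hence $\Cayley$ is defined and continuous on $\{Z\mid \operatorname{Im}Z\ge0\}$. Moreover,
\[
I_g-\Cayley(Z)=2\sqrt{-1}\,(Z+\sqrt{-1}I_g)^{-1}
\]
is invertible there.

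\textbf{Step 2: the inverse map.} The map $\Psi(W)=\sqrt{-1}(I_g+W)(I_g-W)^{-1}$ is defined and continuous on the open set $\{W\mid \det(I_g-W)\ne0\}$. It inverts $\Cayley$ wherever both sides make sense.

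\textbf{Step 3: the key congruence.} Set $W=\Cayley(Z)$ and $P=(Z+\sqrt{-1}I_g)^{-1}$. A direct expansion gives
\[
I_g-W^*W=P^*\bigl[(Z+\sqrt{-1}I_g)^*(Z+\sqrt{-1}I_g)-(Z-\sqrt{-1}I_g)^*(Z-\sqrt{-1}I_g)\bigr]P .
\]
The bracket equals $4\operatorname{Im}Z$, so $I_g-W^*W=4P^*(\operatorname{Im}Z)P$. This shows that $\operatorname{Im}Z$ is positive semidefinite and singular if and only if $I_g-W^*W$ is. In other words, $\Cayley$ maps $\partial\mathfrak{H}_g$ into $\partial\mathfrak{D}_g\cap\{\det(I_g-W)\ne0\}$, and $\Psi$ maps that set back into $\partial\mathfrak{H}_g$. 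This calculation is the only real content, and I expect it to be the main point to check carefully.

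\textbf{Step 4: the two implications.} Suppose first that $Z_n\to Z_\infty\in\partial\mathfrak{H}_g$. By Step 1, $\Cayley(Z_n)\to W_\infty:=\Cayley(Z_\infty)$ and $\det(I_g-W_\infty)\ne0$. By Step 3, $W_\infty\in\partial\mathfrak{D}_g$.

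Conversely, suppose $\Cayley(Z_n)\to W_\infty\in\partial\mathfrak{D}_g$ with $\det(I_g-W_\infty)\ne0$. Then $\Psi$ is continuous near $W_\infty$, so $Z_n=\Psi(\Cayley(Z_n))\to Z_\infty:=\Psi(W_\infty)$. The limit $\operatorname{Im}Z_\infty$ is positive semidefinite, being a limit of positive definite matrices. By the congruence of Step 3 applied at $Z_\infty$, it is singular because $I_g-W_\infty^*W_\infty$ is singular. Hence $Z_\infty\in\partial\mathfrak{H}_g$.
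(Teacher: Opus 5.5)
Your proof is correct and complete. The paper does not prove this proposition; it quotes it from Friedland and Freitas. Your proposal therefore gives a self-contained verification rather than an alternative to an argument in the text.

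The three ingredients you use are exactly what is needed:
\begin{itemize}
\item invertibility of $Z+\sqrt{-1}I_g$, and hence of $I_g-\Cayley(Z)=2\sqrt{-1}(Z+\sqrt{-1}I_g)^{-1}$, whenever $\operatorname{Im}Z\ge0$;
\item continuity of the inverse map $W\mapsto\sqrt{-1}(I_g+W)(I_g-W)^{-1}$ on $\{\det(I_g-W)\ne0\}$;
\item the congruence $I_g-W^*W=4P^*(\operatorname{Im}Z)P$ with $P$ invertible.
\end{itemize}
Together they show slightly more than the proposition. $\Cayley$ is a homeomorphism from $\{\operatorname{Im}Z\ge0\}$ onto $\overline{\mathfrak{D}_g}\cap\{\det(I_g-W)\ne0\}$, and it carries $\partial\mathfrak{H}_g$ onto $\partial\mathfrak{D}_g\cap\{\det(I_g-W)\ne0\}$. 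This is also how the paper actually uses the result. In the proof of \Cref{lem:radial_limit}, the converse direction is re-derived inline from continuity of the inverse Cayley map near a point $W_\lambda$ with $\det(I_g-W_\lambda)\ne0$, which is your Step~2/Step~4 argument.

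One small wording fix. Since $W^T=W$, the ``replacement $W\leftrightarrow W^T$'' is vacuous. The correct reason is that $I_g-W^*W=(I_g-W\overline{W})^T$, and transposing a Hermitian matrix preserves its spectrum. Hence positive (semi)definiteness and singularity pass between the two forms.
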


\subsection{Symplectic representation, Torelli group and Torelli space}
\label{subsec:symplectic_representation}
The action of the mapping class group on $H_1(\Sigma_g,\mathbb{Z})$ gives a homomorphism
\[
\hat{\symrep}\colon \mcg(\Sigma_g)\to
{\rm Aut}(H_1(\Sigma_g,\mathbb Z)),
\]
called the \emph{symplectic representation} of the mapping class group (cf. \cite[Chapter Six]{FarbMargalit2012}). The kernel $\torelliG_g<\mcg(\Sigma_g)$ of the symplectic representation is called the \emph{Torelli group}. We now fix a canonical symplectic homology basis $\{\mathbf{a},\mathbf{b}\}=\{\mathbf{a}_1,\cdots,\mathbf{a}_g,\mathbf{b}_1,\cdots, \mathbf{b}_g\}$ on $\Sigma_g$. With respect to this basis, the symplectic representation takes the form
\begin{equation}
\label{eq:symp_homo1}
\hat{\symrep}\colon \mcg(\Sigma_{g})\to \syp(2g,\mathbb{Z})
\end{equation}
with values in the integral symplectic group. The induced action on $H_1(\Sigma_g,\mathbb{Z})=\bigoplus_{j=1}^g(\mathbb{Z}\mathbf{a}_j\oplus \mathbb{Z}\mathbf{b}_j)$ is given by
 \[
 (\omega(\mathbf{a}),\omega(\mathbf{b}))=(\mathbf{a},\mathbf{b}) \hat{\symrep}([\omega]_*)
 \]
 for $[\omega]_*\in \mcg(\Sigma_{g})$. Composing \eqref{eq:symp_homo1} with the natural projection $\syp(2g,\mathbb Z)\to\psyp(2g,\mathbb Z)$ gives a homomorphism
\begin{equation}
\label{eq:symp_homo2}
\symrep\colon \mcg(\Sigma_{g})\to \psyp(2g,\mathbb{Z})\subset \operatorname{Aut}(\mathfrak{H}_g).
\end{equation}

The quotient space
\[
\torelli_{g}=\teich_{g}/\torelliG_{g}
\]
is called the \emph{Torelli space} of genus $g$. It is known that the Torelli group $\torelliG_{g}$ is torsion-free and acts freely on $\teich_{g}$. Hence $\torelli_{g}$ is a complex manifold of dimension $3g-3$ and the natural projection $\teich_{g} \to \torelli_{g}$ is a holomorphic universal covering map (cf. \cite[Theorem 6.12]{FarbMargalit2012}).
The Torelli space $\torelli_{g}$ can also be viewed as the deformation space of framed Riemann surfaces of genus $g$, where a \emph{framed Riemann surface} of genus $g$ is a closed Riemann surface $M$ of genus $g$ together with a symplectic basis of $H_1(M,\mathbb{Z})$ (e.g. \cite[\S2]{Hain2006}).

\subsection{Period map}
\label{subsec:period_map}
We use the symplectic basis fixed in \S\ref{subsec:symplectic_representation}. 
 For $x=(M,f)\in \teich_{g}$,
 we denote by $\omega_x^j$ the holomorphic $1$-form on $M$ satisfying
\[
\int_{f_*(\mathbf{a}_k)}\omega_x^j=\delta_{jk},
\]
where $\delta_{jk}$ denotes the Kronecker delta. We define the \emph{period matrix} $\Pi(x)=\begin{pmatrix} \Pi_{jk}(x)\end{pmatrix}$ of $x$ with respect to this basis by
\[
\Pi_{jk}(x)=\int_{f_*(\mathbf{b}_k)}\omega_x^j
\]
(cf. \cite{FarkasKra1992}).
It is known that the \emph{period map} $\Pi \colon \teich_{g}\to \mathfrak{H}_g$ is holomorphic (\cite{Ahlfors1960}).
The period map satisfies the equivariance relation
\begin{equation}
\label{eq:symplectic_rep}
\symrep([\omega]_*)(\Pi(x))=\Pi\circ [\omega]_*(x)
\end{equation}
for $x\in \teich_{g}$ and $[\omega]\in \tmod(g)$.

The period map descends to a holomorphic map
$\Pi'\colon \torelli_g\to\mathfrak H_g$.
For $g=2$, this map is an embedding.
For $g\ge3$, it is generically two-to-one onto its image,
with branching along the hyperelliptic locus.
Set
\[
\mathcal{J}_g=\Pi(\teich_g),
\qquad
\mathcal{J}_g^c
=\overline{\mathcal{J}_g}^{\,\mathfrak H_g}.
\]
Let $\torelli_g^c$ denote the Torelli space of stable
curves of compact type, namely stable curves whose
dual graphs are trees, equipped with a symplectic
homology basis.
The period map extends to a proper holomorphic map
\[
\Pi^c\colon\torelli_g^c\longrightarrow \mathcal{J}_g^c
\]
whose restriction to $\torelli_g$ is $\Pi'$
(cf.\ \cite[\S\S2 and 4]{Hain2006}).

The Jacobian of a curve of compact type is the
product, with its product polarization, of the
Jacobians of its irreducible components.
Thus $\mathcal{J}_g^c\setminus \mathcal{J}_g$ consists of the period
matrices of singular curves of compact type;
these principally polarized abelian varieties
are decomposable.
Here and below, decomposability refers to the
principal polarization.
We obtain the following consequence.

\begin{proposition}
\label{prop:compacttype}
Let $\{y_n\}$ be a sequence in $\torelli_g$ that
leaves every compact subset.
If $\Pi'(y_n)$ converges to $Z\in\mathfrak H_g$,
then the principally polarized abelian variety
represented by $Z$ is decomposable.

Conversely, let $\{x_n\}$ be a sequence in $\teich_g$
such that $\Pi(x_n)$ converges to the period matrix
of a decomposable principally polarized abelian
variety.
Then the images of $x_n$ in the moduli space
$\mathcal M_g$ leave every compact subset.
\end{proposition}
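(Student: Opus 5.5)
We prove the two assertions of \Cref{prop:compacttype} separately, using the proper extension $\Pi^c\colon\torelli_g^c\to\mathcal J_g^c$ and the fact that a point of $\mathcal J_g^c$ lies in $\mathcal J_g$ exactly when the corresponding principally polarized abelian variety is the Jacobian of a smooth curve. By the Torelli theorem and the fact that the theta divisor of a smooth curve is irreducible, such a Jacobian is indecomposable. Conversely, the period matrix of a singular curve of compact type is a product of lower-dimensional Jacobians with the product polarization, hence decomposable.

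\textbf{First assertion.} Let $y_n\in\torelli_g$ leave every compact subset of $\torelli_g$, and assume $\Pi'(y_n)\to Z\in\mathfrak H_g$. Choose a compact neighborhood $K$ of $Z$ in $\mathfrak H_g$. Since $\Pi^c$ is proper, $(\Pi^c)^{-1}(K)$ is compact in $\torelli_g^c$, so after passing to a subsequence $y_n\to y_\infty\in\torelli_g^c$. Since $\torelli_g$ is open in $\torelli_g^c$ and $y_n$ leaves every compact subset of $\torelli_g$, the limit satisfies $y_\infty\notin\torelli_g$, so $y_\infty$ is a singular curve of compact type. By continuity, $Z=\Pi^c(y_\infty)$ is the period matrix of a singular curve of compact type, which is decomposable by the product description above.

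\textbf{Converse.} Let $\Pi(x_n)\to Z$ with $Z$ decomposable, and suppose, for contradiction, that the images of $x_n$ in $\mathcal M_g$ do not leave every compact set. Passing to a subsequence, these images lie in a compact set and converge in $\mathcal M_g$. Lift the convergence: choose $\phi_n\in\tmod(g)$ with $\phi_n(x_n)\to x_\infty\in\teich_g$. By \eqref{eq:symplectic_rep}, $\symrep(\phi_n)(\Pi(x_n))=\Pi(\phi_n(x_n))\to\Pi(x_\infty)$. Since $\Pi(x_n)\to Z$ and $\psyp(2g,\mathbb Z)$ acts properly discontinuously on $\mathfrak H_g$, the elements $\symrep(\phi_n)$ range over a finite set. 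Passing to a further subsequence, $\symrep(\phi_n)=M$ is constant, and then $M(Z)=\Pi(x_\infty)$. The left side is decomposable, because decomposability of the principal polarization is invariant under $\psyp(2g,\mathbb Z)$. The right side is the Jacobian of a smooth curve, hence indecomposable. This is a contradiction.

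\textbf{Main obstacle.} The delicate point is the step in the first assertion that a limit point $y_\infty$ of a sequence leaving compacta of $\torelli_g$ must lie in $\torelli_g^c\setminus\torelli_g$. This uses only that $\torelli_g$ is an open subset of $\torelli_g^c$, together with the properness and continuity of $\Pi^c$ from \cite{Hain2006}. Subtleties about the generic two-to-one behavior of $\Pi'$ for $g\ge3$ do not enter.
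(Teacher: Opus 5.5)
Your proposal is correct and follows the paper's proof. In the first assertion you use properness of $\Pi^c$ to extract a limit $y_\infty\in\torelli_g^c\setminus\torelli_g$ whose Jacobian is decomposable, exactly as the paper does. In the converse you work in $\mathfrak H_g$ with the proper discontinuity of $\psyp(2g,\mathbb Z)$, where the paper uses continuity of $j\colon\mathcal M_g\to\mathcal A_g$ and Hausdorffness of $\mathcal A_g$; these are the same argument.
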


\begin{proof}
For the first assertion, set
\[
K=\{Z\}\cup\{\Pi'(y_n):n\ge1\}.
\]
This is a compact subset of $\mathcal{J}_g^c$.
By properness of $\Pi^c$, after passing to a
subsequence, $y_n$ converges to a point
$y_\infty\in\torelli_g^c$.
Since $y_n$ leaves every compact subset of
$\torelli_g$, the limit lies in
$\torelli_g^c\setminus\torelli_g$.
Its Jacobian is therefore decomposable, and
$\Pi^c(y_\infty)=Z$.

For the converse, let
\[
\mathcal A_g
=
\operatorname{Sp}(2g,\mathbb Z)\backslash\mathfrak H_g,
\]
and let $j\colon\mathcal M_g\to\mathcal A_g$
be the map sending a smooth curve to its
principally polarized Jacobian.
Suppose that the images of $x_n$ in $\mathcal M_g$
do not leave every compact subset.
After passing to a subsequence, these images
converge to a smooth curve $C\in\mathcal M_g$.
By continuity of $j$, their Jacobians converge
to $j(C)$.
They also converge to the class of the assumed
decomposable limit.
Since $\mathcal A_g$ is Hausdorff, these limits
coincide.
This contradicts the indecomposability of the
principally polarized Jacobian of a smooth curve.
\end{proof}

\subsection{Radial limit of the period map}
We prove the following statement.
\begin{lemma}[Radial limit of the period map]
    \label{lem:radial_limit}
There exists a measurable set $\mathcal{E}_1\subset \pml$ of full measure with the following properties:
\begin{itemize}
    \item[(a)] $\mathcal{E}_1$ consists of uniquely ergodic measured laminations and is invariant under the action of the mapping class group
    $\tmod(g)$;
    \item[(b)] there exists a measurable map, called the radial limit of the period map,
    \[
    \Pi^*\colon \mathcal{E}_1\to \overline{\mathfrak{H}_g}
    \subset \symm(g,\mathbb{C})
    \]
    such that, for any $[\lambda]\in \mathcal{E}_1$ and any $x\in \teich_{g}$,
    \[
    \lim_{t\to \infty}\Pi(\ray^{\lambda}_x(t))=\Pi^*([\lambda]).
    \]
    In particular, this limit is independent of the choice of $x$;
    \item[(c)] for every $[\lambda]\in \mathcal{E}_1$ and every
    $\begin{bmatrix} A & B \\ C & D\end{bmatrix}\in \syp(2g,\mathbb{Z})$, the radial limit satisfies
    \[
    \det(-C\Pi^*([\lambda])+D)\ne 0;
    \]
    \item[(d)] for every $1\le i,j\le g$ and $[\lambda]\in \mathcal{E}_1$, the $(i,j)$-entry $\Pi^*_{ij}([\lambda])$ of the limit $\Pi^*([\lambda])$ is non-zero; and
    \item[(e)] for every $[\lambda]\in \mathcal{E}_1$ and every $x\in \teich_{g}$, the Teichm\"uller geodesic ray $\ray^{\lambda}_x$ is recurrent in the sense that its projection to the moduli space $\teich_{g}/\tmod(g)$ returns infinitely often to a compact subset of the moduli space.
\end{itemize}
\end{lemma}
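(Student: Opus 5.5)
We will build $\mathcal{E}_1$ as an intersection of countably many full-measure, $\tmod(g)$-invariant sets, one for each of (b)–(e); then (a) holds by construction after intersecting with $\pml^{ue}$, which has full measure and is invariant.

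\textbf{Recurrence, (e).} Masur's theorem on ergodicity of the Teichm\"uller geodesic flow, together with Poincar\'e recurrence, shows that for $\PThursM^x$-almost every $[\lambda]$ the ray $\ray^\lambda_x$ projects to a recurrent ray in moduli space. By Masur's criterion \cite{Masur1992} these directions are uniquely ergodic. For uniquely ergodic $[\lambda]$, rays from different basepoints are asymptotic: their Teichm\"uller distance tends to zero, by Masur's asymptoticity theorem for uniquely ergodic directions, which sharpens the bounded-distance statement of Ivanov recalled before \eqref{eq:Busemann_function}. Hence recurrence is independent of $x$ and invariant under $\tmod(g)$, since the group permutes rays.

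\textbf{Existence of the radial limit, (b).} This is the main step. For a fixed basepoint, $\Pi\circ\ray^\lambda_x$ is the restriction of a holomorphic map from $\teich_g$ (a bounded domain via the Bers embedding) into $\mathfrak H_g\cong\mathfrak D_g$ (Cayley transform), and this map is bounded. I would apply the Fatou-type radial limit theorem for bounded holomorphic (hence pluriharmonic-entrywise) functions on Teichm\"uller space along Teichm\"uller rays, with respect to the Thurston measure class. This is the result in \cite{Miyachi2024Bounded}, which gives almost-everywhere existence of $\lim_t F(\ray^\lambda_x(t))$ for bounded pluriharmonic $F$, via the Poisson integral representation of \cite{Miyachi2023,miyachi2025bddpluriharmonicfunctionII-ToAppear}. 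Applying it to the real and imaginary parts of the entries of $\Cayley\circ\Pi$ gives a limit $W^*([\lambda])\in\overline{\mathfrak D_g}$. Independence of $x$ follows from asymptoticity of rays together with the fact that $\Pi$ is $1$-Lipschitz from the Teichm\"uller (Kobayashi) metric to the Kobayashi metric of $\mathfrak D_g$. Measurability follows because the limit is a pointwise limit of continuous functions. $\tmod(g)$-invariance of the existence set follows from the equivariance \eqref{eq:symplectic_rep} and continuity of the $\psyp(2g,\mathbb Z)$-action on $\overline{\mathfrak D_g}$.

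\textbf{Finiteness, (c), and nonvanishing entries, (d).} To set $\Pi^*=\Cayley^{-1}\circ W^*$ with values in $\overline{\mathfrak H_g}\subset\symm(g,\mathbb C)$, we need $\det(I-W^*)\ne0$ (\Cref{prop:F-F-lemma3.1}). For each fixed $M\in\syp(2g,\mathbb Z)$, the condition in (c) that $\det(-C\Pi^*+D)\ne0$ is equivalent to finiteness of $\Pi^*$ for the basepoint moved by the corresponding mapping class, by \eqref{eq:symplectic_rep}. So it suffices to show that the bad set $\{\det(I-W^*)=0\}$ is null and then intersect over the countable group. Nullity is the expected obstacle. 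I would argue via the Poisson representation: the boundary values of the bounded holomorphic function $h=\det(I-\Cayley\circ\Pi)$, which is nowhere zero in the interior, cannot vanish on a set of positive measure unless $h\equiv0$. This is a uniqueness theorem of Lusin–Privalov type, which I would derive by applying the radial-limit theory to the function $\log|h|$, which is plurisubharmonic and pluriharmonic where defined. The same argument handles each entry $\Pi_{ij}$ in (d), since $\Pi_{ij}\not\equiv0$ on $\teich_g$ (e.g.\ off-diagonal entries are nonzero at a generic point), together with invariance under the countable group.
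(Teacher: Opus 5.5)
Your treatment of (d) has a genuine gap. The entries $\Pi_{ij}$ take values in $\mathfrak H_g$, not in the bounded domain $\mathfrak D_g$, so they are \emph{unbounded} holomorphic functions on $\teich_g$. The argument you describe for the bounded function $h=\det(I_g-\Cayley\circ\Pi)$ therefore does not carry over as ``the same argument''. The identity theorem for radial limits concerns bounded functions. For general holomorphic functions on the disk, vanishing of \emph{radial} limits on a set of positive measure does not force the function to vanish: there are nonzero holomorphic functions on the disk whose radial limits vanish almost everywhere, and the Lusin--Privalov uniqueness theorem needs angular limits.

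The paper supplies exactly this missing ingredient. It moves the basepoint so that $\Pi_{ij}(x)\ne0$ and slices by a Teichm\"uller disk $\Phi_q$ with $\Phi_q(0)=x$. Shiga's lemma \cite[\S5, Lemma]{Shiga1984} then gives finite non-tangential limits of $\Pi\circ\Phi_q$ almost everywhere; these are identified with $\Pi^*$ along radii, and Lusin--Privalov is applied. An equally good fix within your framework is to write $\Pi_{ij}=N_{ij}/\det(I_g-W)$, where $W=\Cayley\circ\Pi$ and $N_{ij}$ is the $(i,j)$-entry of $\sqrt{-1}(I_g+W)\operatorname{adj}(I_g-W)$. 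Then $N_{ij}$ is bounded holomorphic and not identically zero. On the set where $\det(I_g-W^*)\ne0$, one has $\Pi^*_{ij}=0$ if and only if $N^*_{ij}=0$, so the bounded uniqueness argument applies to $N_{ij}$. Since $N_{ij}$ may have zeros, run that argument on disks through a point where $N_{ij}\ne0$.

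A similar caution applies to your derivation of the uniqueness theorem itself. $\log|h|$ is pluriharmonic but unbounded below, so the bounded radial-limit/Poisson theory cannot be applied to it directly. What works is the one-variable Jensen (or Lusin--Privalov) argument on each Teichm\"uller disk through a point where the function is nonzero. This is combined with the disintegration of the Thurston measure class along Teichm\"uller disks, which is what \cite[Theorem~1.2]{Miyachi2024Bounded} packages; the paper simply cites it.

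The rest of your outline follows the paper. Your reduction of (c) to the single case $M=I_{2g}$, followed by translation under the group, is a valid variant of the paper's use of the functions $f_M=\det(-C\Pi+D)\det(I_g-\Cayley\circ\Pi)$. On $\overline{\mathfrak D_g}$, $\det(I_g-\tilde M(W))$ with $\tilde M=\Cayley\circ M\circ\Cayley^{-1}$ vanishes exactly when $f_M$ does, and surjectivity of the symplectic representation covers all $M$. Note that it is the lamination $[\omega]([\lambda])$, not the basepoint, that must be translated, since $\Pi^*$ does not depend on the basepoint.
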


\begin{remark}
By \Cref{lem:null_siegel} below, the radial limit $\Pi^*([\lambda])$ lies in the boundary $\partial\mathfrak{H}_g$ for almost every $[\lambda]\in \mathcal{E}_1$.
\end{remark}

\begin{proof}[Proof of \Cref{lem:radial_limit}]
By \cite[Theorem 3.1]{Miyachi2024Bounded},
there exist a $\tmod(g)$-invariant full-measure set
$\mathcal{E}_0\subset \mathcal{PML}^{ue}$ and a measurable map
\[
(\Cayley\circ\Pi)^*\colon
\mathcal E_0\to\overline{\mathfrak D_g}
\]
such that the following hold:
\begin{enumerate}
\item[(1)] for any $x\in \teich_{g}$ and $[\lambda]\in \mathcal{E}_0$,
\[
\lim_{t\to \infty}(\Cayley\circ\Pi)(\ray^{\lambda}_x(t))
=
(\Cayley\circ\Pi)^*([\lambda]);
\]
\item[(2)] for any $[\omega]_*\in \tmod(g)$,
\[
\symrep([\omega]_*)\bigl((\Cayley\circ\Pi)^*([\lambda])\bigr)
=
(\Cayley\circ\Pi)^*([\omega]_*[\lambda])
\]
for all $[\lambda]\in \mathcal{E}_0$.
\end{enumerate}
The set $\mathcal E_0$ therefore satisfies condition (a).

For
$M=\begin{pmatrix} A & B \\ C & D\end{pmatrix}\in \syp(2g,\mathbb{Z})$, define
\[
f_M(x)
=
\det\bigl(-\sqrt{-1}C+D-(\sqrt{-1}C+D)\Cayley(\Pi(x))\bigr).
\]
Notice that
\[
f_{I_{2g}}(x)=\det(I_g-\Cayley(\Pi(x))).
\]
The function $f_M$ is a bounded holomorphic function on $\teich_{g}$, since
$\Cayley(\Pi(x))\in \mathfrak{D}_g$.
Moreover, writing $W(x)=\Cayley(\Pi(x))$, we have
\[
f_M(x)
=
\det(-C\Pi(x)+D)\det(I_g-W(x))
\ne0
\]
for every $x\in\teich_g$.
Indeed, the first factor is nonzero because the
symplectic action is defined on $\mathfrak H_g$,
and the second is nonzero because
$W(x)\in\mathfrak D_g$.

By (1), its radial limit is given by
\[
f_M^*([\lambda])
=
\det\bigl(-\sqrt{-1}C+D-(\sqrt{-1}C+D)(\Cayley\circ \Pi)^*([\lambda])\bigr)
\]
for $[\lambda]\in \mathcal{E}_0$.
By the identity theorem for radial limit functions \cite[Theorem 1.2]{Miyachi2024Bounded}, the set
\[
\mathcal{E}_1^M
=
\{[\lambda]\in \mathcal{E}_0\mid f_M^*([\lambda])\ne 0\}
\]
has full measure in $\pml$.

Since $\syp(2g,\mathbb{Z})$ is countable, the intersection
\[
\mathcal{E}'_1
=
\bigcap_{M\in \syp(2g,\mathbb{Z})}\mathcal{E}_1^M
\]
also has full measure in $\pml$ and is invariant under the action of $\tmod(g)$.
In particular, since $f_{I_{2g}}^*([\lambda])\ne 0$ on $\mathcal{E}'_1$, we have
\[
\det\bigl(I_g-(\Cayley\circ\Pi)^*([\lambda])\bigr)\ne 0
\]
for all $[\lambda]\in \mathcal{E}'_1$.
For $[\lambda]\in\mathcal{E}_1'$, set
\[
W_\lambda=(\Cayley\circ\Pi)^*([\lambda]).
\]
Since $\det(I_g-W_\lambda)\ne0$, the map
\[
W\longmapsto
\sqrt{-1}(I_g+W)(I_g-W)^{-1}
\]
is well-defined and continuous in a neighborhood of $W_\lambda$.
By the inverse Cayley transform formula, for every
$x\in\teich_g$ we therefore have
\[
\lim_{t\to\infty}\Pi(\ray^\lambda_x(t))
=
\sqrt{-1}(I_g+W_\lambda)(I_g-W_\lambda)^{-1}.
\]
Define
\[
\Pi^*([\lambda])
=
\sqrt{-1}(I_g+W_\lambda)(I_g-W_\lambda)^{-1}.
\]
This matrix belongs to $\overline{\mathfrak H_g}$, since it is
a limit of matrices in $\mathfrak H_g$.
Moreover, $\Pi^*$ is measurable, and the limit is independent
of the basepoint $x$, because $W_\lambda$ has these properties.
Thus, $\mathcal E_1'$ satisfies condition (b).
Since the non-vanishing of $f_M^*([\lambda])$ is equivalent to
\[
\det(-C\Pi^*([\lambda])+D)\ne 0,
\]
$\mathcal{E}_1'$ satisfies condition (c).

% Before proving claim (d), we recall that, by \cite[\S5, Lemma]{MR766636}, for any holomorphic map
% $F\colon \mathbb{D}\to \teich_{g}$, the composition $\Pi\circ F$ admits a finite non-tangential limit almost everywhere on $\partial\mathbb{D}$.
We next arrange condition (d) by removing another null set.
Fix indices $1\le i,j\le g$ and a point $x\in \teich_g$.
Suppose, to the contrary, that there exists a measurable set
$A\subset \mathcal{E}'_1$ of positive $\PThursM^x$-measure such that
$\Pi^*_{ij}([\lambda])=0$ for all $[\lambda]\in A$.
By the Rauch variational formula, we have
$\partial \Pi_{ij}|_x=\omega^x_i\omega^x_j$ in $\mathcal{Q}_x=T^*_x\teich_g$
(cf. \cite{Ahlfors1960} and \cite{Rauch1965}).
In particular, $\Pi_{ij}$ is non-constant on $\teich_g$.
Since $\PThursM^x$ and $\PThursM^y$ are mutually absolutely continuous for any
$x,y\in \teich_g$, we may change the basepoint $x$, if necessary, and assume that
$\Pi_{ij}(x)\ne 0$.

By the disintegration method discussed in \cite[\S3.1, \S4]{Miyachi2024Bounded}, there exists a quadratic differential $q\in \mathcal{Q}_x$ such that
\[
A_q=\{e^{i\theta}\in \partial \mathbb{D}\mid [v(e^{-i\theta}q)]\in A\}
\]
has positive angular Lebesgue measure.
Let $\Phi_q\colon \mathbb{D}\to \teich_{g}$ be the Teichm\"uller disk determined by $q$, normalized by $\Phi_q(0)=x$.
Since $\Phi_q$ is holomorphic, by \cite[\S5, Lemma]{Shiga1984}, the composition $\Pi_{ij}\circ \Phi_q$ admits a finite non-tangential limit almost everywhere on $\partial\mathbb{D}$.
Moreover, by the definition of the radial limit $\Pi^*$, the non-tangential boundary value of $\Pi_{ij}\circ \Phi_q$ agrees with
the radial limit
\[
\lim_{r\to 1}\Pi_{ij}(\Phi_q(re^{i\theta}))
=\lim_{t\to\infty}\Pi_{ij}(\ray^{v(e^{-i\theta}q)}_x(t))
=\Pi_{ij}^*([v(e^{-i\theta}q)])=0
\]
for almost every $e^{i\theta}\in A_q$.
Namely, the non-tangential boundary value of $\Pi_{ij}\circ \Phi_q$ vanishes on a subset of $\partial\mathbb{D}$ of positive measure.
Therefore, by the Lusin--Privalov uniqueness theorem, we obtain
$\Pi_{ij}\circ \Phi_q\equiv 0$ on $\mathbb{D}$
(cf. \cite{LusinPriwaloff1925}; see also \cite{Tsuji1944}).
In particular, $\Pi_{ij}(x)=\Pi_{ij}(\Phi_q(0))=0$, which contradicts the choice of $x$.

Therefore, for each pair $(i,j)$, the set
\[
\{[\lambda]\in \mathcal{E}'_1\mid \Pi^*_{ij}([\lambda])=0\}
\]
has $\PThursM^x$-measure zero.
Since there are only finitely many pairs $(i,j)$, after removing the union of these null sets from $\mathcal{E}'_1$, we may assume that
$\Pi^*_{ij}([\lambda])\ne 0$ for all $1\le i,j\le g$ and all $[\lambda]$ in the resulting full-measure set.

We next prove condition (e).
Let $\mathcal M_g=\teich_g/\tmod(g)$, and equip
$Q^1\mathcal M_g$ with the Masur--Veech measure.
Choose a countable compact exhaustion
$\{K_n\}_{n\ge1}$ of $Q^1\mathcal M_g$.
Since the Teichm\"uller geodesic flow preserves
the finite Masur--Veech measure, the Poincar\'e
recurrence theorem implies that, for each $n$,
almost every point of $K_n$ returns to $K_n$
at arbitrarily large positive integer times.
Consequently, almost every point of
$Q^1\mathcal M_g$ determines a trajectory whose
projection returns infinitely often to a compact
subset of $\mathcal M_g$.

Let $\mu$ denote the lifted Masur--Veech measure
on $Q^1\teich_g$, let
$p\colon Q^1\teich_g\to\teich_g$ be the bundle
projection, and put $m=p_*\mu$.
Disintegrate $\mu$ over $m$, and denote the
conditional probability measure on the unit
cotangent sphere $S(x)$ by $s_x$.
The preceding recurrence statement holds for
$s_x$-almost every point of $S(x)$ for
$m$-almost every $x$.

Under the identification
\[
S(x)\longrightarrow\pml,
\qquad q\longmapsto[v(q)],
\]
the pushforward of $s_x$ is equivalent to the
Thurston measure class
(cf.\ \cite[\S2.3, Proposition 2.3(i), and \S2.3.1]
{Mirzakhani_etal2012}).
Choose a basepoint $x_*$ for which the preceding
almost-everywhere recurrence statement holds.
It follows that there is a measurable full-measure
set $\mathcal E_1''\subset\pml^{ue}$ such that
$\ray^\lambda_{x_*}$ projects to a recurrent ray
in $\mathcal M_g$ for every
$[\lambda]\in\mathcal E_1''$.

For any $x\in\teich_g$ and
$[\lambda]\in\mathcal E_1''$, Masur's
asymptoticity theorem implies that
$\ray^\lambda_x$ and $\ray^\lambda_{x_*}$
are asymptotic after a constant shift of their
time parameters (cf.\ \cite{Masur1982}).
Thus the projection of $\ray^\lambda_x$ also
returns infinitely often to a compact subset
of $\mathcal M_g$.
Indeed, a closed bounded neighborhood of a compact
subset of $\mathcal M_g$ is compact in the
quotient Teichm\"uller metric.
This proves condition (e) on $\mathcal E_1''$
for every basepoint $x$.

Let $F$ be the full-measure measurable subset obtained
after imposing conditions (b)--(e), and define
\[
\mathcal E_1
=
\bigcap_{\gamma\in\tmod(g)}\gamma^{-1}(F).
\]
Since $\tmod(g)$ is countable and its action preserves
the Thurston measure class, $\mathcal E_1$ has full measure.
By construction, $\mathcal E_1$ is $\tmod(g)$-invariant.
Moreover, $\mathcal E_1\subset F$, so conditions (b)--(e)
remain valid on $\mathcal E_1$.
Since $F$ consists of uniquely ergodic measured laminations,
condition (a) also holds.
This completes the proof.
\end{proof}

\subsection{Pushforward measures on $\partial \mathfrak{H}_g$}
For $x\in\teich_g$, let $\PSiegelThursM^x$ be the pushforward of the Thurston measure to $\overline{\mathfrak{H}_g}$ under $\Pi^*$:
\[
\PSiegelThursM^x=(\Pi^*)_*(\PThursM^x).
\]
% We first prove the following.

\begin{lemma}
\label{lem:null_siegel}
For any $x\in \teich_{g}$,
$\PSiegelThursM^x\bigl(\mathfrak{H}_g\bigr)=0$.
\end{lemma}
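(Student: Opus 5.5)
We will show that for $\PThursM^x$-almost every $[\lambda]\in\mathcal E_1$, the radial limit $\Pi^*([\lambda])$ cannot lie in the open set $\mathfrak H_g$. The idea is to play the recurrence property (e) of \Cref{lem:radial_limit} against the behaviour of the period map near infinity. Suppose $[\lambda]\in\mathcal E_1$ and $Z=\Pi^*([\lambda])\in\mathfrak H_g$. By (b), $\Pi(\ray^\lambda_x(t))\to Z$ as $t\to\infty$ for every basepoint $x$.

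\textbf{Step 1: the ray leaves every compact subset of $\torelli_g$.}
Suppose instead that the projection of $\ray^\lambda_x(t_n)$ to $\torelli_g$ stays in a compact set along some sequence $t_n\to\infty$. Then there are $[\omega_n]\in\torelliG_g$ with $[\omega_n]^{-1}(\ray^\lambda_x(t_n))$ in a fixed compact set of $\teich_g$. Because $d_T(x,\ray^\lambda_x(t_n))=t_n\to\infty$, the elements $[\omega_n]$ are pairwise distinct after passing to a subsequence. Elements of the Torelli group fix $\Pi$ by \eqref{eq:symplectic_rep}, so this is consistent with the limit $Z$, and we need a separate argument to rule it out.

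Such a sequence would make $[\lambda]$ a conical-type limit point. We expect this to hold only on a null set, and this is precisely the content of \Cref{cor:conical_limit_sets}. To avoid circularity, we therefore aim to exclude Step 1 differently. By (e), the ray returns infinitely often to a compact set $K$ of $\mathcal M_g$: there are $t_n\to\infty$ and $[\phi_n]\in\tmod(g)$ with $y_n=[\phi_n]^{-1}(\ray^\lambda_x(t_n))\in\widetilde K$, a compact set in $\teich_g$. Then
\[
\Pi(\ray^\lambda_x(t_n))=\symrep([\phi_n])(\Pi(y_n)),
\]
and $\Pi(y_n)$ ranges over a compact set $\Pi(\widetilde K)\subset\mathfrak H_g$.

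\textbf{Step 2: the symplectic images stabilize.}
Since $\psyp(2g,\mathbb Z)$ acts properly discontinuously on $\mathfrak H_g$, and $\symrep([\phi_n])(\Pi(y_n))\to Z\in\mathfrak H_g$ while $\Pi(y_n)$ stays in a compact set, only finitely many elements $\symrep([\phi_n])$ occur. Passing to a subsequence, $\symrep([\phi_n])=\sigma$ is constant. Hence $[\phi_n]=[\phi_1][\omega_n]$ with $[\omega_n]\in\torelliG_g$, and the points $\ray^\lambda_x(t_n)$ lie within bounded distance of the orbit $\torelliG_g([\phi_1](\widetilde K))$. So $[\lambda]$ is a conical limit point of $\torelliG_g$.

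\textbf{Step 3: the set of such points is null.}
It remains to show that
\[
\{[\lambda]\in\mathcal E_1:\ \Pi^*([\lambda])\in\mathfrak H_g\ \text{and the ray is Torelli-conical}\}
\]
is null. On this set, compactness gives $Z\in\sigma(\Pi(\widetilde K))$. We would then use the countability of $\tmod(g)$ together with a measure argument in Teichm\"uller disks, as in the proof of (d): the set $A$ of directions whose ray is Torelli-recurrent with limit in $\mathfrak H_g$, if of positive measure, gives a Teichm\"uller disk $\Phi_q$ on which $\Pi\circ\Phi_q$ has interior boundary values on a positive-measure set of angles.

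The main obstacle is turning this into a contradiction. Our first attempt would be the following. Composing with a bounded holomorphic function such as $f_M$ or an entry of $\Cayley\circ\Pi$ does not by itself forbid interior boundary values. Instead, we would use Torelli-recurrence together with \Cref{prop:compacttype} in reverse. Conversely, if the ray does leave every compact set of $\torelli_g$, then \Cref{prop:compacttype} forces $Z$ to be decomposable, whereas recurrence (e) plus Step 2 shows the ray returns near the Torelli-orbit of a compact set. Balancing these two alternatives is the step we expect to be hardest. Ergodicity-type input, namely Masur's ergodicity applied to the $\tmod(g)$-invariant set $\{\Pi^*\in\mathfrak H_g\}$ (invariant by (2) in the proof of \Cref{lem:radial_limit}), then reduces matters to showing that this set is not of full measure. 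That would follow from (c) together with the non-discreteness of generic boundary values.
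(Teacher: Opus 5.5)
\textbf{There is a genuine gap.} The core of the lemma is never proved: you need that $E=\{[\lambda]\in\mathcal E_1\mid \Pi^*([\lambda])\in\mathfrak H_g\}$ is null, and the proposal leaves exactly this step open.

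Steps 1--2 only turn the question into the nullity of a set of Torelli-conical points. That is what the paper deduces \emph{from} this lemma (\Cref{cor:conical_limit_sets}), so the detour is circular and can be dropped. Recurrence (e) and \Cref{prop:compacttype} play no role here.

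Your closing sentence does not work either. Condition (c) holds automatically at points of $\mathfrak H_g$, since $\syp(2g,\mathbb R)$ acts on $\mathfrak H_g$ and so $\det(-CZ+D)\ne 0$ there. It therefore cannot show that $E$ fails to have full measure. The phrase ``non-discreteness of generic boundary values'' is never made precise.

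The missing argument has two ingredients.

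First, $\PSiegelThursM^x$ has no atoms in $\mathfrak H_g$. Some entry $h=(\Cayley\circ\Pi)_{ij}$ is a nonconstant bounded holomorphic function on $\teich_g$. If $\Pi^*\equiv Z$ on a set of positive measure, the radial limit of $h$ would equal the constant $(\Cayley(Z))_{ij}$ there. The identity theorem for radial limits \cite[Theorem~1.2]{Miyachi2024Bounded} would then force $h$ to be constant. You correctly noted that such bounded functions do not by themselves forbid interior boundary values. They do, however, forbid concentration at a single point, and that is what is needed.

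Second, use ergodicity twice.
\begin{enumerate}
\item $E$ is $\tmod(g)$-invariant by equivariance of $\Pi^*$. So if $E$ had positive measure, it would have full measure.
\item The map $q\circ\Pi^*\colon E\to\psyp(2g,\mathbb Z)\backslash\mathfrak H_g$ is then a $\tmod(g)$-invariant measurable map into a standard Borel space (the action is proper). Its pushforward takes only the values $0$ and $1$, so it is a Dirac mass.
\item Hence $\Pi^*$ lies almost surely in a single $\psyp(2g,\mathbb Z)$-orbit. That orbit is countable, so it is null by the first ingredient. This is a contradiction.
\end{enumerate}
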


\begin{proof}
Fix $x_0\in\teich_g$.
We first show that $\PSiegelThursM^{x_0}(\{Z\})=0$ for every
$Z\in\mathfrak H_g$.
Since the period map is nonconstant and the Cayley transform is biholomorphic, some component $h=(\Cayley\circ\Pi)_{ij}$ is a nonconstant bounded holomorphic function on $\teich_g$.
If $\PSiegelThursM^{x_0}(\{Z\})>0$, then the radial limit of $h$ equals
the constant $(\Cayley(Z))_{ij}$ on a set of positive
Thurston measure.
By the identity theorem for radial limits
\cite[Theorem 1.2]{Miyachi2024Bounded}, this would imply
that $h$ is constant, a contradiction.

Suppose now that $\PSiegelThursM^{x_0}(\mathfrak H_g)>0$, and set
\[
E=\{[\lambda]\in\mathcal E_1
       \mid \Pi^*([\lambda])\in\mathfrak H_g\}.
\]
The equivariance of $\Pi^*$ implies that $E$ is invariant
under the mapping class group.
By ergodicity, $\PThursM^{x_0}(E)=1$.

Since $\psyp(2g,\mathbb{Z})$ acts properly on $\mathfrak{H}_g$, its orbit space is locally compact, second countable, and Hausdorff, and hence is a standard Borel space.
Let
\[
q\colon\mathfrak{H}_g\longrightarrow
\psyp(2g,\mathbb{Z})\backslash\mathfrak{H}_g
\]
be the quotient map.
The measurable map $q\circ\Pi^*$ on $E$ is invariant
under the mapping class group.
By ergodicity, the pushforward of $\PThursM^{x_0}|_E$ under $q\circ\Pi^*$ assigns either zero or one to every Borel set. A probability measure with this property on a standard Borel space is a Dirac measure. Thus $q\circ\Pi^*$ is essentially constant, and there exists $Z_0\in\mathfrak{H}_g$ such that
\[
\PThursM^{x_0}\bigl(\{[\lambda]\in E
          \mid \Pi^*([\lambda])\in \mathcal{O}\}\bigr)=1,
\]
where $\mathcal{O}$ is the $\psyp(2g,\mathbb{Z})$-orbit of $Z_0$.
However, the orbit $\mathcal{O}$ is countable, and the first part of the proof shows that
\[
\PSiegelThursM^{x_0}(\mathcal{O})
\le
\sum_{A\in\psyp(2g,\mathbb{Z})}\PSiegelThursM^{x_0}(\{A(Z_0)\})
=0.
\]
This is a contradiction.
Thus $\PSiegelThursM^{x_0}(\mathfrak H_g)=0$.
Since $x_0\in \teich_g$ was arbitrary, the lemma follows.
\end{proof}

\subsection{The conical limit set of the Torelli group}
We characterize the conical limit points of the Torelli group $\torelliG_g$ on $\mathcal E_1$ in terms of the radial limit of the period map.

\begin{proposition}
    \label{prop:conical_limit_set_torelli_group}
    Let $[\lambda]\in \mathcal{E}_1$. The following are equivalent.
    \begin{itemize}
    \item[(a)] $[\lambda]$ is a conical limit point of $\torelliG_{g}$; 
    \item[(b)] the limit $\Pi^*([\lambda])$ lies in $\mathfrak{H}_g$; and
    \item[(c)] the limit $\Pi^*([\lambda])$ lies in the image of the period map $\Pi\colon \teich_{g}\to \mathfrak{H}_g$.
    \end{itemize}
\end{proposition}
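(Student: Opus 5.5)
I will prove the cycle (a)$\Rightarrow$(c)$\Rightarrow$(b)$\Rightarrow$(a). The implication (c)$\Rightarrow$(b) is trivial, since $\Pi(\teich_g)\subset\mathfrak H_g$.

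\textbf{(a)$\Rightarrow$(c).} Assume $[\lambda]\in\mathcal E_1$ is conical for $\torelliG_g$. Fix $x\in\teich_g$ and a $\torelliG_g$-orbit $\torelliG_g(y)$ with infinitely many points in the $R$-neighborhood of $\ray^\lambda_x$. This gives times $t_n\to\infty$ and distinct $[\omega_n]\in\torelliG_g$ with $d_T(\ray^\lambda_x(t_n),[\omega_n](y))\le R$. By \eqref{eq:symplectic_rep} and the definition of the Torelli group, $\Pi([\omega_n](y))=\Pi(y)$. The period map is holomorphic, so it is distance non-increasing from the Teichm\"uller--Kobayashi distance to the Kobayashi (Bergman-type) distance of $\mathfrak H_g$. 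Hence $\Pi(\ray^\lambda_x(t_n))$ stays in the closed Kobayashi ball of radius $R$ about $\Pi(y)$, which is a compact subset of $\mathfrak H_g$. By \Cref{lem:radial_limit}(b), $\Pi(\ray^\lambda_x(t_n))\to\Pi^*([\lambda])$, and this limit lies in that ball. To reach (c) and not merely (b), I will pass to Torelli space. The points $y_n=\pi(\ray^\lambda_x(t_n))\in\torelli_g$ stay within distance $R$ of $\pi(y)$ in the quotient metric. Since $\torelliG_g$ acts properly and freely, these lie in a compact set. A subsequence converges to some $y_\infty\in\torelli_g$, and continuity of $\Pi'$ gives $\Pi^*([\lambda])=\Pi'(y_\infty)\in\Pi(\teich_g)$.

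\textbf{(b)$\Rightarrow$(a).} This is the main obstacle. Suppose $Z=\Pi^*([\lambda])\in\mathfrak H_g$. Set $y_t=\pi(\ray^\lambda_x(t))\in\torelli_g$. I first want to show that $y_t$ does not leave every compact subset of $\torelli_g$ along the whole ray, i.e.\ that there is a sequence $t_n\to\infty$ with $y_{t_n}$ bounded. By \Cref{lem:radial_limit}(e), the ray is recurrent in $\mathcal M_g$. So there are $t_n\to\infty$ with $\ray^\lambda_x(t_n)$ projecting into a fixed compact set $K\subset\mathcal M_g$. Suppose $y_{t_n}$ left every compact set of $\torelli_g$. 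The first part of \Cref{prop:compacttype} then says $Z$ is decomposable. The converse part of \Cref{prop:compacttype}, applied to $\ray^\lambda_x(t_n)$, then says their images leave compact sets of $\mathcal M_g$, contradicting recurrence. Hence, after passing to a subsequence, $y_{t_n}$ stays in a compact set $L\subset\torelli_g$. Fix $y\in\teich_g$; then $L$ is contained in the $R$-neighborhood of $\pi(y)$ for some $R$. Lifting, each $\ray^\lambda_x(t_n)$ lies within distance $R$ of some $[\omega_n](y)$ with $[\omega_n]\in\torelliG_g$. These orbit points are infinitely many, because $t_n\to\infty$ and $R$ is fixed. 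So $[\lambda]$ is conical relative to the ray from $x$.

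Finally, the definition of a conical limit point requires this for every ray with direction $[\lambda]$. This follows because $x$ was arbitrary in \Cref{lem:radial_limit}(b) and (e), so the argument applies to every basepoint. Alternatively, Masur's asymptoticity of rays toward uniquely ergodic points gives it directly. The delicate points I expect to need care with are the compactness of Kobayashi balls in $\mathfrak H_g$ and the handling of the decomposable case.
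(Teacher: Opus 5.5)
Your proof is correct, but you run the cycle in the opposite direction from the paper, which proves (a)$\Rightarrow$(b)$\Rightarrow$(c)$\Rightarrow$(a).

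\textbf{The paper's route.} For (a)$\Rightarrow$(b) it uses the Kobayashi distance-decreasing property of $\Pi$ and properness of the Kobayashi distance on $\mathfrak H_g$. For (b)$\Rightarrow$(c) it argues that a limit in $\overline{\Pi(\teich_g)}\setminus\Pi(\teich_g)$ is decomposable. The converse half of \Cref{prop:compacttype} then forces the whole ray to diverge in $\mathcal M_g$, contradicting recurrence, \Cref{lem:radial_limit}(e). For (c)$\Rightarrow$(a) it uses the first half of \Cref{prop:compacttype} together with indecomposability of Jacobians of smooth curves. This shows that the entire projected ray $\prG_{Tor}(\ray^\lambda_{x_1}(t))$ stays in a compact subset of $\torelli_g$, and the paper then lifts.

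\textbf{Your (a)$\Rightarrow$(c).} This bypasses both the Kobayashi geometry of $\mathfrak H_g$ and any information about $\overline{\Pi(\teich_g)}$. The points $[\omega_n]^{-1}\ray^\lambda_x(t_n)$ lie in the compact ball $\overline{B(y,R)}$, and $\torelliG_g$-invariance of $\Pi$ plus continuity give $\Pi^*([\lambda])\in\Pi(\teich_g)$ directly. Two small remarks on this step. First, the Kobayashi-ball sentence is therefore superfluous, and calling that distance ``Bergman-type'' is inaccurate for $g\ge2$, though harmless. Second, you should say explicitly that $t_n\to\infty$ follows from proper discontinuity of the action.

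\textbf{Your (b)$\Rightarrow$(a).} This merges the paper's last two steps. Recurrence supplies times $t_n$ at which the ray is in a fixed compact part of $\mathcal M_g$. The two halves of \Cref{prop:compacttype}, applied back to back, then rule out divergence in $\torelli_g$ along those times, and that is all conicality requires.

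\textbf{What each buys.} The paper's (c)$\Rightarrow$(a) gives the stronger fact that the whole ray stays within bounded distance of a single $\torelliG_g$-orbit, and it needs no recurrence at that step. Your version only controls a subsequence of times. For the equivalence as stated, your route is slightly more economical, since it never needs the description of $\overline{\Pi(\teich_g)}\setminus\Pi(\teich_g)$.
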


\begin{proof}
{\bf (a) $\Rightarrow$ (b).}
There exist a point $x_1\in \teich_{g}$, possibly different from the fixed basepoint $x_0$, a sequence of distinct elements $\{\varphi_n\}_{n=1}^\infty$ in $\torelliG_{g}$, a sequence $\{t_n\}_{n=1}^\infty$ in $[0,\infty)$ with $t_n\to \infty$, and a constant $D>0$ such that
\[
d_T(\ray^{\lambda}_{x_1}(t_n),\varphi_n(x_0))\le D
\]
for all $n\ge 1$.
Since the Teichm\"uller distance is the Kobayashi distance and the period map is holomorphic, the distance-decreasing property implies that
\[
d_{\mathfrak{H}_g}(\Pi(\ray^{\lambda}_{x_1}(t_n)),\Pi(x_0))
=
d_{\mathfrak{H}_g}(\Pi(\ray^{\lambda}_{x_1}(t_n)),\Pi(\varphi_n(x_0)))
\le D
\]
for all $n\ge 1$.
Since $\mathfrak{H}_g$ is a proper complete metric space with respect to its invariant Kobayashi distance, the sequence
$\{\Pi(\ray^{\lambda}_{x_1}(t_n))\}_{n=1}^{\infty}$
has a subsequence converging to a point of $\mathfrak{H}_g$.
On the other hand, by the definition of the radial limit, this sequence converges to $\Pi^*([\lambda])$.
Hence $\Pi^*([\lambda])\in \mathfrak{H}_g$.

\medskip
\noindent
{\bf (b) $\Rightarrow$ (c).}
Assume that $\Pi^*([\lambda])\in \mathfrak{H}_g$.
Since
\[
\Pi^*([\lambda])=
\lim_{t\to \infty}\Pi(\ray^{\lambda}_{x_1}(t)),
\]
we have
$\Pi^*([\lambda])\in \overline{\Pi(\teich_g)}\cap \mathfrak{H}_g$,
where the closure is taken in $\mathfrak{H}_g$.

Suppose, to the contrary, that
$\Pi^*([\lambda])\in \overline{\Pi(\teich_g)}\setminus \Pi(\teich_g)$.
Then, 
% by the description of the closure of the Torelli locus recalled in
% \S\ref{subsec:period_map}, this limit is the period matrix of a reducible principally polarized abelian variety, equivalently to a stable curve of compact type.
% Since the closure $\overline{\Pi(\teich_g)}$ of the image is the covering space of the moduli space $\tilde{\mathcal{M}}_g$ of stable curves of compact type, whose covering projection is the quotient map by the properly discontinuous action of $\syp(2g,\mathbb{Z})$, 
by \Cref{prop:compacttype},
the underlying Riemann surfaces along the ray
$\ray^{\lambda}_{x_1}(t)$ leave every compact subset of moduli space as $t\to \infty$.
On the other hand, since
$[\lambda]\in \mathcal{E}_1$, the ray $\ray^{\lambda}_{x_1}$ is recurrent in the moduli space by \Cref{lem:radial_limit}\,(e).
This contradicts the preceding divergence in moduli space.

\medskip
\noindent
{\bf (c) $\Rightarrow$ (a).}
% We recall that, since the quotient map $\pi_{Tor}\colon \teich_{g}\to \torelli_{g}$ is a universal covering map, it follows from \cite[Chapter IV, Proposition 1.6]{MR2194466} that the Kobayashi distance on $\torelli_{g}$ is the quotient metric of the Teichm\"uller distance on $\teich_{g}$.
Suppose that $\Pi^*([\lambda])\in \Pi(\teich_g)$, and let $x_1\in \teich_{g}$.
Since
\[
\Pi(\ray^{\lambda}_{x_1}(t))\to \Pi^*([\lambda])
\]
as $t\to \infty$, we claim that the ray
\[
\{\prG_{Tor}(\ray^{\lambda}_{x_1}(t))\in \torelli_g \mid t\ge 0\}
\]
is contained in a compact subset of $\torelli_g$, where $\prG_{Tor}\colon \teich_{g}\to \torelli_{g}$ is the quotient map.
Indeed, otherwise there would exist an increasing sequence $\{t_n\}_{n=1}^{\infty}$ in $[0,\infty)$ with $t_n\to\infty$ such that $\prG_{Tor}(\ray^{\lambda}_{x_1}(t_n))$ leaves every compact subset of $\torelli_g$.
By \Cref{prop:compacttype}, the limit
$\Pi^*([\lambda])$ would represent a decomposable
principally polarized abelian variety.
This contradicts
$\Pi^*([\lambda])\in\Pi(\teich_g)$, since the
principally polarized Jacobian of a smooth curve
is indecomposable.
Thus $\prG_{Tor}(\ray^{\lambda}_{x_1}(t))$ is contained in a compact subset of $\torelli_g$.

Hence there is $D>0$ such that, for every $t\ge 0$, $\prG_{Tor}(x_0)$ and $\prG_{Tor}(\ray^{\lambda}_{x_1}(t))$ can be joined by a smooth path of length at most $D$ in $\torelli_g$ with respect to the Kobayashi distance (cf. \cite{Royden1971b}).
By lifting the paths, for every $t\geq 0$, we find an element $\varphi_t\in \torelliG_{g}$ satisfying
\[
d_T(\ray^{\lambda}_{x_1}(t),\varphi_t(x_0))\le D.
\]
Since the Teichm\"uller ray diverges in $\teich_g$, the orbit of $x_0$ under $\torelliG_g$ meets the $D$-neighborhood of the ray $\ray^{\lambda}_{x_1}$ infinitely often.
This proves that $[\lambda]$ is a conical limit point of $\torelliG_{g}$.
\end{proof}

From \Cref{lem:null_siegel} and \Cref{prop:conical_limit_set_torelli_group}, we obtain the following.

\begin{corollary}[The conical limit set has measure zero]
\label{cor:conical_limit_sets}
The conical limit set $\Lambda_C(\torelliG_{g})$ of the Torelli group has measure zero with respect to the Thurston measure.
\end{corollary}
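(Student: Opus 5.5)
The proof will combine the two results just established. By \Cref{prop:conical_limit_set_torelli_group}, a point $[\lambda]\in\mathcal E_1$ is a conical limit point of $\torelliG_g$ exactly when $\Pi^*([\lambda])\in\mathfrak H_g$. This gives the identity
\[
\Lambda_C(\torelliG_g)\cap\mathcal E_1=\{[\lambda]\in\mathcal E_1\mid \Pi^*([\lambda])\in\mathfrak H_g\}=(\Pi^*)^{-1}(\mathfrak H_g).
\]
The right-hand side is measurable, because $\Pi^*$ is measurable by \Cref{lem:radial_limit}\,(b) and $\mathfrak H_g$ is open (hence Borel) in $\symm(g,\mathbb C)$.

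Fix $x\in\teich_g$. By the definition of the pushforward measure and \Cref{lem:null_siegel},
\[
\PThursM^x\bigl((\Pi^*)^{-1}(\mathfrak H_g)\bigr)=\PSiegelThursM^x(\mathfrak H_g)=0.
\]
The remaining part of the conical limit set lies in $\pml\setminus\mathcal E_1$, which is $\PThursM^x$-null by \Cref{lem:radial_limit}. Therefore $\Lambda_C(\torelliG_g)$ is contained in the union of two null sets. It has $\PThursM^x$-measure zero, with respect to the completed measure in case measurability of $\Lambda_C$ itself is in question. Since all the measures $\PThursM^y$ are mutually absolutely continuous, the same holds for every basepoint, that is, for the Thurston measure class.

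There is no substantial obstacle here, since the analytic work was done in \Cref{lem:null_siegel} and \Cref{prop:conical_limit_set_torelli_group}. The only points to check are that the proposition is applied only on $\mathcal E_1$ and that the complement of $\mathcal E_1$ is null.
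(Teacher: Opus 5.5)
Your proposal is correct and matches the paper's argument, which deduces the corollary directly from \Cref{lem:null_siegel} and \Cref{prop:conical_limit_set_torelli_group}; the full-measure set $\mathcal{E}_1$ of \Cref{lem:radial_limit} absorbs the rest of $\Lambda_C(\torelliG_g)$. Only the implication (a)$\Rightarrow$(b) of the proposition is actually needed, since the argument uses just the inclusion $\Lambda_C(\torelliG_g)\cap\mathcal{E}_1\subset(\Pi^*)^{-1}(\mathfrak{H}_g)$.
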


In contrast, the conical limit set of the Teichm\"uller modular group $\tmod(g)$ has full measure. This follows from the conservativity of the Teichm\"uller geodesic flow and the fact that almost every Teichm\"uller geodesic ray returns infinitely often to a compact set (cf. \cite{Masur1982}).
Moreover, the fixed points of any pseudo-Anosov element in a subgroup of $\tmod(g)$ belong to the conical limit set of that subgroup.
The contrast above concerns Thurston measure on the boundary. For results on the genericity of pseudo-Anosov elements, see, for example, \cite{ChoiInhyeok2024, VivekaSoutoJing2020, Maher2011, MalesteinSouto2013, Rivin2008, Rivin2009}.

\begin{corollary}
With respect to the measure induced by the Masur--Veech measure,
almost every trajectory of the Teichm\"uller geodesic flow
on the unit cotangent bundle of the Torelli space
$\teich_g/\torelliG_g$ is non-recurrent in forward time.
\end{corollary}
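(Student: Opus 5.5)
The plan is to translate recurrence of the geodesic flow on the unit cotangent bundle $Q^1\torelli_g$ of $\torelli_g=\teich_g/\torelliG_g$ into the conical limit condition. We then use \Cref{cor:conical_limit_sets} to show that the recurrent directions form a null set.

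First, fix the measure. The Masur--Veech measure on $Q^1\teich_g$ descends to $Q^1\torelli_g$, since $\torelliG_g$ acts freely. A set of trajectories in $Q^1\torelli_g$ is null exactly when its lift is null in $Q^1\teich_g$. By disintegrating over the projection $p\colon Q^1\teich_g\to\teich_g$, as in the proof of \Cref{lem:radial_limit}(e), it suffices to show that for $m$-almost every $x\in\teich_g$ the following holds: for $s_x$-almost every unit $q\in S(x)$, the forward trajectory of the projection of $q$ is non-recurrent. The pushforward of $s_x$ under $q\mapsto[v(q)]$ lies in the Thurston measure class. The forward trajectory of $q$ projects to $\prG_{Tor}(\ray^{v(q)}_x(t))$, so it is enough to show that the set of $[\lambda]$ for which $\prG_{Tor}\circ\ray^\lambda_x$ is recurrent is $\PThursM^x$-null.

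Next, we need to be precise about recurrence. For a trajectory in $Q^1\torelli_g$, recurrence means returning infinitely often, at arbitrarily large times, to some compact set $K\subset Q^1\torelli_g$. The projection of such a $K$ to $\torelli_g$ is compact. Lift the projected compact set to $\teich_g$: it is contained in $\torelliG_g\cdot\overline{B}(x_0,D)$ for some $D$. Hence there are times $t_n\to\infty$ and elements $\varphi_n\in\torelliG_g$ with $d_T(\ray^\lambda_x(t_n),\varphi_n(x_0))\le D$. Since $\ray^\lambda_x$ leaves compacta, infinitely many of the points $\varphi_n(x_0)$ are distinct. This is exactly the situation at the start of the proof of (a)$\Rightarrow$(b) in \Cref{prop:conical_limit_set_torelli_group}.

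We restrict to $[\lambda]\in\mathcal E_1$, which is conull by \Cref{lem:radial_limit}. On $\mathcal E_1$, the radial limit $\Pi^*([\lambda])$ is independent of the basepoint. The argument of (a)$\Rightarrow$(b) then goes through using only this single ray: the Schwarz--Pick/Kobayashi contraction gives $\Pi^*([\lambda])\in\mathfrak H_g$. (Alternatively, by Ivanov's bounded-distance result for rays with the same direction, the same conclusion holds for rays from any basepoint, so $[\lambda]\in\Lambda_C(\torelliG_g)$.) By \Cref{lem:null_siegel}, the set $\{[\lambda]\in\mathcal E_1:\Pi^*([\lambda])\in\mathfrak H_g\}$ is $\PThursM^x$-null for every $x$. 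Therefore the recurrent directions form a null set at every basepoint, and the disintegration step yields the corollary.

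The main obstacle is the measure-theoretic passage between the Masur--Veech measure on the Torelli cover and the Thurston measure on the fibers $S(x)$. We need the null set to be uniform enough that Fubini applies. This is handled by the fact that the nullity holds at every basepoint $x$ (not merely almost every one), and by using the measurability of the recurrence condition, which can be written as a countable union over compact exhaustions and rational times.
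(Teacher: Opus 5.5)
Your proposal is correct and follows essentially the same route as the paper: recurrence in the Torelli quotient puts the ray within bounded distance of a $\torelliG_g$-orbit at times tending to infinity; the corresponding directions form a null set (the paper cites \Cref{cor:conical_limit_sets}, which rests on exactly the combination of \Cref{prop:conical_limit_set_torelli_group}, (a)$\Rightarrow$(b), and \Cref{lem:null_siegel} that you invoke); and disintegration of the Masur--Veech measure, whose fiber measures lie in the Thurston class, finishes the argument. Your observation that the (a)$\Rightarrow$(b) step needs only the single recurrent ray, or Ivanov's bounded-distance result to obtain the full conical condition, together with your measurability remark, fills in details the paper leaves implicit.
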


\begin{proof}
If the projection of a forward trajectory to the
Torelli space returns infinitely often to a compact
subset, its vertical projective foliation belongs
to $\Lambda_C(\torelliG_g)$.
Indeed, compactness in the quotient implies that,
along a sequence of times tending to infinity,
a lift of the trajectory remains within a bounded
Teichm\"uller distance of a fixed
$\torelliG_g$-orbit.

By \Cref{cor:conical_limit_sets}, this set of
directions has zero Thurston measure.
Using the equivalence of the conditional measures
on unit cotangent spheres with the Thurston
measure class, as in the proof of
\Cref{lem:radial_limit}(e), disintegration shows
that the corresponding set of initial conditions
has zero Masur--Veech measure.
The same holds on the quotient by $\torelliG_g$,
which proves the assertion.
\end{proof}

\subsection{Complement of the small horospherical limit set}
We close this section with the following observation.

\begin{proposition}[$\pml\setminus \Lambda_h(\torelliG_g)\ne \emptyset$]
\label{prop:complement_small_horospherical_limit_set}
Let $g\ge 2$ and $x_0=(M_0,f_0)\in\teich_g$. If $[\lambda]\in\pml$ is represented by the vertical measured foliation of a nonzero holomorphic $1$-form on $M_0$, then $[\lambda]\in\operatorname{Dir}_{\torelliG_g}(x_0)\setminus\Lambda_h(\torelliG_g)$.
\end{proposition}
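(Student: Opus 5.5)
Let $\phi$ be a nonzero holomorphic $1$-form on $M_0$, put $q=\phi^2\in\mathcal Q_{x_0}$, and let $\lambda=v_{x_0}(q)$. Then $q_{\lambda,x_0}=q$. The aim is to show that, for every $[\omega]\in\torelliG_g$,
\[
\ext_{[\omega](x_0)}(\lambda)\ \ge\ \ext_{x_0}(\lambda).
\]
Once this holds, the infimum $\horo^{x_0}_{\torelliG_g}([\lambda])$ of the extremal-length ratios is attained at $x_0$ and equals $1>0$. This is the key inequality, and it is obtained from homology via the period map.

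\emph{Step 1: lower bound from periods.} On any $x=(M,f)$, the vertical foliation of $\phi^2$ is the foliation $|\operatorname{Re}\phi|$. For every closed curve $\gamma$ we have
\[
i(\lambda,\gamma)\ \ge\ \Bigl|\operatorname{Re}\int_\gamma\phi\Bigr|,
\]
and the right-hand side depends only on the homology class of $\gamma$. This gives a linear cohomology class $c=[\operatorname{Re}\phi]\in H^1(\Sigma_g,\mathbb R)$, which is invariant under $\torelliG_g$.

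Next I would use the standard estimate
\[
\ext_y(\lambda)\ \ge\ \|c\|_y^2,
\]
where $\|c\|_y^2=\int_{M_y}|\theta_c|^2$ is the Dirichlet norm of the unique harmonic $1$-form $\theta_c$ on $y$ with real periods given by $c$. To prove it, write $q_{\lambda,y}=\psi$ locally. The closed current given by $|\operatorname{Re}\sqrt{\psi}|$ dominates the periods. One then argues, via the Hodge/Dirichlet principle, that $\|q_{\lambda,y}\|$ equals the $L^2$ norm of $\operatorname{Re}\sqrt{\psi}$ on the orientation double cover, and the harmonic minimizer in a fixed cohomology class has smaller norm.

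More concretely, I would take the double cover $\hat M\to M_y$ on which $\sqrt{\psi}$ is single-valued. The class $c$ lifts to $\hat c$, and $\operatorname{Re}\sqrt\psi$ is a closed $1$-form. Since $|\operatorname{Re}\sqrt{\psi}|$ is the transverse measure of a foliation equivalent to $\lambda$, the pairing inequality is sharp only in absolute value. Hence the cleaner route is the following extremal-length comparison: $\ext_y(\lambda)\ge \sup_\gamma i(\lambda,\gamma)^2/\ext$-type bounds via the dual definition, combined with the Riemann bilinear relations. I expect the sharp inequality $\ext_y(\lambda)\ge\|c\|_y^2$ to be the main obstacle. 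I would first try the approach via the Hodge norm on the double cover.

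\emph{Step 2: equality at $x_0$ and invariance.} At $x_0$, $\lambda$ is exactly the foliation of $\operatorname{Re}\phi$, which is harmonic. Therefore
\[
\ext_{x_0}(\lambda)=\|\phi\|^2=2\|c\|_{x_0}^2,
\]
with the normalization adjusted. Since $[\omega]\in\torelliG_g$ acts trivially on $H^1$, equivariance of extremal length gives
\[
\ext_{[\omega](x_0)}(\lambda)=\ext_{x_0}([\omega]^{-1}\lambda),
\]
and $[\omega]^{-1}\lambda$ has the same real cohomology class $c$. The Hodge norm $\|c\|_{x_0}$ is unchanged, so Step 1 applied at $x_0$ to $[\omega]^{-1}\lambda$ yields
\[
\ext_{x_0}([\omega]^{-1}\lambda)\ \ge\ \|c\|^2_{x_0}=\ext_{x_0}(\lambda).
\]
This proves the key inequality.

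\emph{Step 3: conclusions.} The key inequality shows that the infimum in \eqref{eq:horo_definition3} is at least $1$ and is attained at $x_0$. For the Busemann function, use $\busemann^{x_0}_\lambda(x)\ge \tfrac12\log(\ext_x(\lambda)/\ext_{x_0}(\lambda))$, which holds for all $[\lambda]$ (a Kerckhoff-type inequality); this avoids needing unique ergodicity. It gives $\horo^{x_0}_{\torelliG_g}([\lambda])=1$, attained at $\omega=\mathrm{id}$, so $[\lambda]\in\operatorname{Dir}_{\torelliG_g}(x_0)$. The same Busemann bound with the basepoint independence \eqref{eq:Busemann_function} shows that $\busemann^{x_0}_\lambda$ is bounded below on the orbit. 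Hence \eqref{eq:small-horo_definition} fails, and $[\lambda]\notin\Lambda_h(\torelliG_g)$.
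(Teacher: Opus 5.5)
\textbf{There is a genuine gap: the key inequality is not proved.} Your Steps 2--3 are a sound reduction. The class $c=[\operatorname{Re}\phi]$, and hence its Hodge norm, is $\torelliG_g$-invariant. The bound $\busemann^{x_0}_\lambda(x)\ge\frac12\log\bigl(\ext_x(\lambda)/\ext_{x_0}(\lambda)\bigr)$ holds for every $[\lambda]$, by \eqref{eq:QC-inv_extremal_length} together with $\ext_{\ray^\lambda_{x_0}(t)}(\lambda)=e^{-2t}\ext_{x_0}(\lambda)$. So you are right that unique ergodicity is not needed.

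Everything, however, rests on the inequality $\ext_y(\lambda)\ge\|c\|_y^2$ with \emph{equality} at $x_0$, and you do not prove it. The route you sketch fails for two reasons:
\begin{enumerate}
\item Although $\lambda$ is orientable at $x_0$, the Hubbard--Masur differential $\psi=q_{\lambda,y}$ is in general not the square of an abelian differential, because Whitehead moves can split even-order zeros into simple ones.
\item On the orientation double cover $\pi\colon\hat M\to M_y$, the form $\operatorname{Re}\sqrt\psi$ is anti-invariant under the deck involution. Its class therefore lies in the $(-1)$-eigenspace, which is Hodge-orthogonal to the invariant class $\pi^*c$. The Dirichlet principle only compares forms in the same cohomology class, so it gives nothing here.
\end{enumerate}
The fallback via ``dual definitions and Riemann bilinear relations'' is not an argument.

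The normalization also matters. With $\|c\|_y^2=\int_{M_y}\theta_c\wedge*\theta_c$ one has exactly $\ext_{x_0}(\lambda)=\int_{M_0}\operatorname{Re}\phi\wedge*\operatorname{Re}\phi=\|c\|_{x_0}^2$. A constant factor such as the $2$ in your Step 2 would still give $[\lambda]\notin\Lambda_h(\torelliG_g)$, but not that the infimum is attained at $x_0$, so the Dirichlet-point conclusion would be lost.

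\textbf{How to close it.} The inequality is true and can be proved as follows.
\begin{enumerate}
\item Let $u\colon\tilde M_y\to T$ be the leaf-space projection of the lifted vertical foliation of $\psi$ to its dual $\mathbb R$-tree. It is equivariant with energy $\|\psi\|$.
\item A primitive of the lift of $\operatorname{Re}\phi$ induces a $1$-Lipschitz map $T\to\mathbb R$ with translation character $c$. This uses that measure-equivalent foliations have equivariantly isometric dual trees.
\item The composition gives a closed $L^2$ form on $M_y$ in the class $c$ with energy at most $\ext_y(\lambda)$. The Dirichlet principle then finishes.
\end{enumerate}

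The paper avoids extremal length entirely. By Kra's theorem, $\Pi\circ\ray^\lambda_{x_0}$ is an isometric embedding for the Kobayashi distance on $\mathfrak H_g$. Since $\Pi\circ[\omega]=\Pi$ for $[\omega]\in\torelliG_g$ and $\Pi$ is distance-decreasing, one gets $d_T([\omega](x_0),\ray^\lambda_{x_0}(t))\ge d_{\mathfrak H_g}(\Pi(x_0),\Pi(\ray^\lambda_{x_0}(t)))=t$. Hence $\busemann^{x_0}_\lambda([\omega](x_0))\ge0$ directly.
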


\begin{proof}
Choose an abelian differential $\eta$ on $M_0$ such that $v(\eta^2)=\lambda$. By \cite{Kra1981}, $\Pi\circ \ray^\lambda_{x_0}\colon [0,\infty)\to \mathfrak{H}_g$ is an isometric embedding with respect to the usual distance on $[0,\infty)$ and the Kobayashi distance on $\mathfrak{H}_g$.
By the distance-decreasing property of the Kobayashi distance, for any $[\omega]\in \torelliG_g$ and $t\ge 0$,
\begin{align*}
d_T([\omega](x_0),\ray^\lambda_{x_0}(t))
&\ge d_{\mathfrak{H}_g}(\Pi\circ [\omega](x_0),\Pi\circ \ray^\lambda_{x_0}(t)) \\
&\ge d_{\mathfrak{H}_g}(\Pi(x_0),\Pi\circ \ray^\lambda_{x_0}(t)) \\
&= d_{\mathfrak{H}_g}(\Pi\circ \ray^\lambda_{x_0}(0),\Pi\circ \ray^\lambda_{x_0}(t))=t.
\end{align*}
Therefore, $\busemann^{x_0}_\lambda([\omega](x_0))\ge 0$ for all $[\omega]\in\torelliG_g$, so $[\lambda]\notin\Lambda_h(\torelliG_g)$. Since $\busemann^{x_0}_\lambda(x_0)=0$, we also have $[\lambda]\in\operatorname{Dir}_{\torelliG_g}(x_0)$.
\end{proof}
% \section{Conclusion}

% \subsection{Garnett points}
% A point $[\lambda]\in\pml$ is called a \emph{Garnett point} of $G$ if $\horo^{x_0}_G([\lambda])>0$ but this infimum is not attained on $G(x_0)$ (cf.\ \cite[\S2.6]{Nicholls1989}). The Garnett points are contained in $\Lambda_H(G)$.

\section{Appendix}

\subsection{Coordinates for the once-punctured torus}

We recall explicit coordinates for the Teichm\"uller
space and the space of measured laminations of the
once-punctured torus.
These coordinates will be used to prove the nullity
of level loci in this case.

Henceforth, we identify $\Sigma_{1,1}$ with
$(\mathbb{R}^2\setminus\mathbb{Z}^2)/\mathbb{Z}^2$, equipped with the orientation induced by the standard orientation of $\mathbb{R}^2$. Let $p_0\in\Sigma_{1,1}$ be the image of $(1/2,1/2)$ under the quotient projection. Let $A$ and $B$ be the simple closed curves obtained as the images of the horizontal and vertical lines through $(1/2,1/2)$, respectively. We orient $A$ in the direction of increasing first coordinate and $B$ in the direction of increasing second coordinate. Regarded as loops based at $p_0$, the curves $A$ and $B$ freely generate the fundamental group $\pi_1(\Sigma_{1,1},p_0)$.

In \S\S\ref{subsubsec:TeichmullerSpace11}--\ref{subsubsec:Extremal-length11}, we recall basic facts about the Teichm\"uller space of $\Sigma_{1,1}$; see \cite{miyachi2025functiontheorydynamicsergodic} for proofs.

\subsubsection{Teichm\"uller space of $\Sigma_{1,1}$}
\label{subsubsec:TeichmullerSpace11}
For $\tau\in \mathbb{H}=\{\tau\in \mathbb{C}\mid \operatorname{Im}(\tau)>0\}$, we define $\Gamma_\tau$ as the lattice in $\mathbb{C}$ generated by $1$ and $\tau$.
Let $x=(M,f)\in \teich_{1,1}$.
By the uniformization theorem, there is a unique $\tau=\tau(x)\in \mathbb H$ such that $M$ is biholomorphic to $M_\tau=(\mathbb{C}-\Gamma_\tau)/\Gamma_\tau$ and $f_*(A)$ and $f_*(B)$ correspond to the generators $1$ and $\tau$ of $\Gamma_\tau$.
The correspondence
\[
\teich_{1,1}\ni x\mapsto \tau(x)\in \mathbb{H}
\]
is biholomorphic. Henceforth, we identify $\teich_{1,1}$ with $\mathbb{H}$ in this manner.

\subsubsection{Measured laminations on $\Sigma_{1,1}$}
\label{subsubsec:ML11}
Let $p$, $q\in \mathbb{Z}$. In what follows, we assume that $q\ge 0$ and $\operatorname{gcd}(p,q)=1$ when $q>0$, and $p=1$ when $q=0$.
The \emph{$p/q$-curve} on $\Sigma_{1,1}$ is a simple closed curve homotopic to the quotient curve of the line with direction $(-p,q)$ in $\mathbb{R}^2$. Any nontrivial and non-peripheral simple closed curve is a $p/q$-curve for some $p/q\in \hat{\mathbb{Q}}=\mathbb{Q}\cup\{\infty\}$ where $1/0=\infty$.
In fact, the set of homotopy classes of nontrivial and non-peripheral simple closed curves is identified with $\hat{\mathbb{Q}}$.
Let $\gamma_{p/q}$ be the homotopy class of a $p/q$-curve.
The embedding
\[
\mathbb{R}_{>0}\times \hat{\mathbb{Q}}\ni (t,p/q)\mapsto (-tp,tq)\in \mathbb{R}\times \mathbb{R}_{\ge 0}
\]
induces the identification 
\begin{equation}
\label{eq:ml-coordinates11}
\ml=\ml(\Sigma_{1,1})\cong \mathbb{R}^2/\mathbb{Z}_2
\end{equation}
where $\mathbb{Z}_2=\mathbb{Z}/2\mathbb{Z}$ acts on $\mathbb{R}^2$ by rotation through $\pi$.
The real-analytic structure on $\ml(\Sigma_{1,1})$ induced by the identification with $\mathbb{R}^2/\mathbb{Z}_2$ coincides with the real-analytic structure arising from train-track coordinates.
The Thurston measure on $\ml$ is the quotient of the Lebesgue measure on $\mathbb{R}^2$.

\subsubsection{Hubbard--Masur differentials and extremal lengths}
\label{subsubsec:Extremal-length11}
For $x=(M,f)\in \teich_{1,1}\cong \mathbb{H}$ and $\lambda=[a,b]\in \ml(\Sigma_{1,1})=\mathbb{R}^2/\mathbb{Z}_2$, the Hubbard--Masur differential $q_{\lambda,x}$ is
\begin{equation}
\label{eq:Hubbard-Masur-differential11}
q_{\lambda,x}=-\left(\frac{a+b\overline{\tau(x)}}{\operatorname{Im}(\tau(x))}\right)^2dz^2
\end{equation}
on $M=M_{\tau(x)}$, where $z$ denotes the local coordinate induced by the standard coordinate on the abelian cover $\mathbb{C}-\Gamma_
{\tau(x)}$. 

\subsection{Level loci for $(g,m)=(1,1)$}

We prove \Cref{thm:level_set} for $(g,m)=(1,1)$.
By \eqref{eq:Hubbard-Masur-differential11}, the extremal length
of $\lambda=[a,b]\in\ml$ at $x\in\teich_{1,1}$ is
\begin{equation}
\label{eq:extremal_length-formula11}
\ext_x(\lambda)
=\frac{|a+b\tau(x)|^2}{\operatorname{Im}(\tau(x))}.
\end{equation}

Fix distinct points $x_1,x_2\in\teich_{1,1}$ and $c\in\mathbb R$.
Write $\tau(x_j)=\xi_j+i\eta_j$, where $\eta_j>0$, for $j=1,2$.
Under the identification
$\ml\cong\mathbb R^2/\mathbb Z_2$,
the level locus $\ml_c(x_1,x_2)$ is the image of the zero set
of the homogeneous quadratic polynomial
\[
P_c(a,b)
=
\frac{(a+b\xi_1)^2+b^2\eta_1^2}{\eta_1}
-
e^c\frac{(a+b\xi_2)^2+b^2\eta_2^2}{\eta_2}.
\]

We claim that $P_c$ is not identically zero.
Otherwise, comparison of the coefficients of $a^2$ and $ab$
would give
\[
\eta_2=e^c\eta_1,
\qquad
\xi_1=\xi_2.
\]
Comparison of the coefficients of $b^2$ would then give
$\eta_1=e^c\eta_2$.
Since $\eta_1,\eta_2>0$, these identities imply $c=0$
and $\tau(x_1)=\tau(x_2)$, contradicting $x_1\ne x_2$.

The zero set of a nonzero homogeneous quadratic polynomial
on $\mathbb R^2$ is contained in the union of at most two
lines through the origin.
It therefore has Lebesgue measure zero.
Since the Thurston measure on $\ml$ is the quotient
of Lebesgue measure on $\mathbb R^2$,
the level locus $\ml_c(x_1,x_2)$ is $\ThursM$-null.
Its projectivization $\pml_c(x_1,x_2)$ consists of at most
two points and is consequently $\PThursM^x$-null
for every $x\in\teich_{1,1}$.
This proves \Cref{thm:level_set} for $(g,m)=(1,1)$.

\subsection{Level loci for $(g,m)=(0,4)$}

We deduce the four-punctured sphere case from the
once-punctured torus case.

Fix a topological double cover of the sphere
branched over its four marked points, and choose
one ramification point as the marked point of
the covering torus.
Lifting complex structures gives the standard
identification
\[
L\colon\teich_{0,4}\longrightarrow\teich_{1,1}.
\]
Here the other three ramification points are
filled in.

An essential simple closed curve $\gamma$ on
the four-punctured sphere separates the punctures
into two pairs.
Its inverse image consists of two curves
representing the same essential curve class
$\alpha$ on the once-punctured torus.
The correspondence $\gamma\mapsto\alpha$, extended
homogeneously and continuously using slope
coordinates, gives a homeomorphism
\[
T\colon\ml_{0,4}\longrightarrow\ml_{1,1}.
\]
In particular, it induces a homeomorphism
\[
\mathbb P T\colon\pml_{0,4}\longrightarrow\pml_{1,1}.
\]

For $x\in\teich_{0,4}$ and an essential simple
closed curve $\gamma$, the extremal length
formula for the double cover gives
\[
\ext_{L(x)}(2T(\gamma))=2\ext_x(\gamma)
\]
(cf.\ \cite[Lemma 4.1 and \S4]
{FortierBourqueEtAl2024}).
Since extremal length is homogeneous of degree two,
\[
\ext_x(\gamma)=2\ext_{L(x)}(T(\gamma)).
\]
By homogeneity, continuity, and the density of
weighted simple closed curves, this identity
extends to every $\lambda\in\ml_{0,4}$:
\[
\ext_x(\lambda)=2\ext_{L(x)}(T(\lambda)).
\]

Now let $x_1,x_2\in\teich_{0,4}$ be distinct,
and let $c\in\mathbb R$.
The preceding identity gives
\[
\ext_{x_1}(\lambda)=e^c\ext_{x_2}(\lambda)
\quad\Longleftrightarrow\quad
\ext_{L(x_1)}(T(\lambda))
=e^c\ext_{L(x_2)}(T(\lambda)).
\]
Since $L(x_1)\ne L(x_2)$, the once-punctured
torus calculation shows that the projective
classes satisfying the right-hand equality
form a set of at most two points.
Thus $\pml_c(x_1,x_2)$ also consists of at most
two points and has zero projectivized Thurston
measure.

Moreover, $\ml_c(x_1,x_2)$ is contained in the
union of at most two rays, together with the
origin.
Each ray is locally contained in a one-dimensional
linear subspace in train-track coordinates and
therefore has zero Thurston measure.
This proves \Cref{thm:level_set} for $(g,m)=(0,4)$.
\def\cprime{$'$} \def\cprime{$'$}

\end{document}